    \pgfplotsset{
        compat=1.9,
    }
\definecolor{DarkBlue}{rgb}{0,0.08,0.45}
\let\footnote=\endnote
\newcommand{\tabincell}[2]{\begin{tabular}{@{}#1@{}}#2\end{tabular}}
\begin{document}

\TITLE{Harmonizing SAA and DRO}

\ARTICLEAUTHORS{

\AUTHOR{Ziliang Jin$^{\textup a}$, Jianqiang Cheng$^{\textup b}$, Daniel Zhuoyu Long$^{{\textup c}}$, Kai Pan$^{{\textup d}}$}
\vspace{0.1cm}
\AFF{$^{\textup a}$School of Economics and Management, Southeast University, Nanjing, China}
\AFF{$^{\textup b}$College of Engineering, University of Arizona, Tucson, AZ 85721, USA}
\AFF{$^{\textup c}$Faculty of Engineering, The Chinese University of Hong Kong, New Territories, Hong Kong}
\AFF{$^{\textup d}$Faculty of Business, The Hong Kong Polytechnic University, Kowloon, Hong Kong}
\vspace{0.1cm}
\AFF{Contact: \EMAIL{ziliang.jin@seu.edu.cn} (ZJ), \EMAIL{jqcheng@arizona.edu} (JC), \\ \EMAIL{zylong@se.cuhk.edu.hk} (DZL), \EMAIL{kai.pan@polyu.edu.hk} (KP)}

}

\ABSTRACT{Decision-makers often encounter uncertainty, and the distribution of uncertain parameters plays a crucial role in making reliable decisions. However, complete information is rarely available. The sample average approximation (SAA) approach utilizes historical data to address this, but struggles with insufficient data. Conversely, moment-based distributionally robust optimization (DRO) effectively employs partial distributional information but can yield conservative solutions even with ample data. To bridge these approaches, we propose a novel method called \emph{harmonizing optimization (HO)}, which integrates SAA and DRO by adaptively adjusting the weights of \textit{data} and \textit{information} based on sample size $N$. This allows HO to amplify data effects in large samples while emphasizing information in smaller ones. More importantly, HO performs well across varying data sizes without needing to classify them as large or small. We provide practical methods for determining these weights and demonstrate that HO offers finite-sample performance guarantees, proving asymptotic optimality when the weight of \textit{information} follows a $1/\sqrt{N}$-rate. In addition, HO can be applied to enhance scenario reduction, improving approximation quality and reducing completion time by retaining critical information from reduced scenarios. Numerical results show significant advantages of HO in solution quality compared to Wasserstein-based DRO, and highlight its effectiveness in scenario reduction.}

\KEYWORDS{Stochastic Programming; Data-driven Optimization; Partial Distributional Information}

\maketitle

\vspace{-.8cm}

\section{Introduction} \label{sec:intro}





A major challenge in solving stochastic programming models \citep{birge2011introduction} for decision-making problems under uncertainty is that the probability distribution of the random parameter is rarely known in practice.
We can often collect \emph{data} from practices to support decision-making. 
Thus, extensive studies use historical realizations/observations (i.e., data) of the random parameter to estimate its unknown distribution and then obtain an optimal or near-optimal solution. 
For instance, the well-known sample average approximation (SAA) utilizes data to derive the empirical distribution, thereby approximating the unknown distribution. 
The SAA approach to stochastic programming problems has attractive performance guarantees.
First, the SAA model's size (the number of decision variables and constraints) scales linearly in the number of samples \citep{shapiro2003monte}.
Second, the SAA model exhibits asymptotic optimality under mild conditions, showcasing that the obtained optimal value and decisions are guaranteed to converge to those of the original models as the number of samples goes to infinity \citep{shapiro2021lectures}. 
With various theoretical studies \citep{kleywegt2002sample, hu2012sample, gotoh2025data}, SAA has also been applied in practice, including supply chain management \citep{schutz2009supply, cheung2019sampling, lin2022data}, power system operations \citep{takriti2000incorporating, porras2023tight, schindler2024planner}, and financial planning \citep{alexander2006minimizing, xu2009smooth}. 

Despite its considerable success, SAA has limitations.
As a purely data-driven paradigm, it relies solely on data and does not incorporate any additional information (e.g., domain knowledge). 
Consequently, it exhibits poor performance when a stochastic programming model has limited data.
It also remains challenging to determine whether a given amount of data is sufficient for SAA to yield a reliable solution, particularly in high-dimensional problems. 
This difficulty arises from the fact that while SAA is known to perform well with large datasets, the definition of ``large'' can vary significantly depending on the specific problem at hand. 
Thus, when faced with a dataset that may not meet the criteria for being considered large, it becomes difficult to determine whether SAA is the appropriate approach for solving the problem.

Besides data, we may also possess \emph{partial distributional information} about the random parameters (e.g., moment information) in practice.
This information setting is common across various industries.
In power systems, we can derive mean and correlation information about uncertain solar power generation using external physical factors like solar radiation and precipitation \citep{luo2021deep}.
Similarly, in transportation systems, we can obtain mean information about uncertain vehicle trips from external behavioral factors like vehicle velocity and acceleration \citep{bahari2021injecting}.
We can incorporate such partial distributional information to help address the uncertainty, where distributionally robust optimization (DRO) can serve this purpose.

The DRO approach provides a robust optimal solution that performs the best under the worst-case distribution in a predefined distributional ambiguity set \citep{scarf1958min}.
The ambiguity set, containing all relevant distributions, can be described using partial distributional information about the uncertainty, such as moment information \citep{rahimian2019distributionally}.
In particular, the moment-based ambiguity set considers distributions whose moments satisfy certain conditions, such as restricting their first and second moments to be close to nominal moments \citep{delage2010distributionally, zymler2013distributionally, wiesemann2014distributionally}.
By leveraging the partial distributional information, the moment-based DRO provides solutions that have superior performance compared to those obtained by SAA in the out-of-sample tests \citep{delage2010distributionally, liu2017distributionally, shehadeh2023distributionally}.
Thus, the moment-based DRO has received extensive attention, with proven performance guarantees \citep{delage2010distributionally, wiesemann2014distributionally, long2023supermodularity} and 
a wide range of applications, including transportation management \citep{ghosal2020distributionally, basciftci2021distributionally, shehadeh2023distributionally}, machine learning \citep{lanckriet2002robust, nguyen2020robust, li2022moment}, and finance \citep{ghaoui2003worst, popescu2007robust, rujeerapaiboon2016robust, liu2017distributionally}. 

Unlike the SAA approach, which exhibits asymptotic optimality, the moment-based DRO approach may yield a conservative solution when we have a large amount of data.
More specifically, the effectiveness of these two approaches generally depends on the data size; that is, the SAA approach performs well with a large data size, while the moment-based DRO excels with a small data size.
Thus, to adopt an appropriate approach to the problem, decision-makers may initially assess the size of available data, judging whether it is large or small.
However, assessing data size presents significant challenges for decision-makers for the following two reasons.
\begin{enumerate}[label={(\roman*)}, wide, labelwidth=!, labelindent=0pt]
\item Besides the amount of data, assessing data size also requires considering uncertain parameters and the model's dimensionality. 
The same amount of data may be sufficient (i.e., considered large) for some parameters and models, but insufficient (i.e., considered small) for others.
For example, estimating the distribution of uncertain parameters with clear features and low dimensionality requires a relatively small amount of data, whereas a relatively large amount of data may be needed otherwise.
Similarly, what is considered a large amount of data for a low-dimensional problem may be deemed small for a high-dimensional one.


\item Before solving the problem, a quantitative relationship between the required amount of data and the uncertain parameters and the model’s dimensionality may not be established.
Thus, decision-makers cannot determine the exact amount of required data and assess whether the given amount of data is ``large'' or ``small'' for the problem at hand.
\end{enumerate}

The significant challenges in assessing data size can easily lead to misjudgments. 
Such errors can result in selecting inappropriate approaches, thereby compromising solution quality.
When data is erroneously assessed as large (when in fact it is small) and the SAA approach is consequently employed, the obtained solution may lack robustness. 
Conversely, when data is mistakenly deemed small (when in fact it is large) and the DRO approach is consequently adopted, the obtained solution may be conservative, failing to fully use the value of the data.
To address the challenges of assessing data size, we propose an innovative approach that eliminates the need to evaluate data size and performs consistently well for any data size.

In this paper, we integrate both data and partial distributional information to address the uncertainty without incurring the aforementioned drawbacks, leading to an approach harmonizing the SAA and DRO approaches, namely \emph{harmonizing optimization (HO)}.
More importantly, we can adaptively adjust the weights of \textit{data} and \textit{information} (i.e., the significance of their roles in this approach) according to the available sample size.
Such an approach offers an attractive step toward bridging the data and information to address the uncertainty in stochastic programs and support data-driven decision-making.
Specifically, it works well for any data size, whether large or small, allowing decision-makers to use it directly without the need to evaluate the data size.
Notably, this approach can be extended to scenario reduction and significantly improve its performance by incorporating partial information.
Specifically, when we reduce the number of scenarios included in a stochastic program, the HO approach helps retain the information about the dropped scenarios, thereby enhancing the quality of approximations. 
Thus, HO can help decision-makers reduce the number of scenarios to consider, significantly alleviating the computational difficulty of decision-making under uncertainty.
We summarize our contributions as follows:
\begin{enumerate}[label={(\roman*)}, wide, labelwidth=!, labelindent=0pt]
\item We propose a novel approach, referred to as HO, aiming to obtain superior-quality solutions to decision-making problems under uncertainty.
HO utilizes both data and information by harmonizing SAA and moment-based DRO approaches.
In HO, the weights of data and information can be adaptively adjusted according to the sample size, amplifying the significance of data in large samples and emphasizing the influence of information in limited samples.
Consequently, HO works well for any data size, enabling direct use without evaluating the data size.
\item We show a finite-sample performance guarantee for our proposed HO model. 
The HO model also ensures asymptotic optimality, holding performance guarantees when the weight parameter is in a $1/\sqrt{N}$-rate, where $N$ denotes the number of given samples.
Moreover, the HO model can be reformulated as a computationally tractable model, such as a linear programming (LP) or semidefinite programming (SDP) model.
\item We show the applicability and strength of our HO method in scenario reduction.
Compared with existing approaches, it can obtain a superior approximation with greater efficiency for stochastic programming problems.
More importantly, it only needs to consider a few scenarios to maintain effectiveness, regardless of the original sample size. 
\item We conduct numerical experiments to reveal the significance of HO in addressing decision-making under uncertainty and scenario reduction. 
We compare HO against the Wasserstein-based DRO in the mean-risk portfolio optimization problem.
The HO consistently stands out in out-of-sample performance across all sample sizes, with particularly notable improvements when the size is limited.
We also compare HO against prevailing scenario reduction approaches in the lot sizing problem.
With the same number of reduced scenarios, the HO provides a more accurate approximation of the original problem with all the scenarios while significantly reducing computational time.
\end{enumerate}

Note that \cite{tsang2025tradeoff} independently propose a similar framework recently, called the tradeoff (TRO) approach, which combines SAA and DRO using a sample-size-dependent weight.
Our focus and application of HO differ from theirs in three aspects.
\textit{First}, concerning the challenges in assessing data size and ensuring consistently good performance across all data sizes, we focus on integrating data and partial distributional information. 
Specifically, we harmonize SAA and moment-based DRO, rather than Wasserstein-based DRO. 
Proposition \ref{prop: wasserstein_equivalence} in Section \ref{sec:intro-ho} shows that combining SAA and Wasserstein-based DRO, as studied in \cite{tsang2025tradeoff}, is essentially equivalent to using Wasserstein-based DRO solely.
\textit{Second}, the weight to balance \textit{data} and \textit{information} (denoted by $\lambda$ in Section \ref{sec:intro-ho}) is crucial in HO, and we discuss the selection of $\lambda$ in detail.
Specifically, we provide an explicit form $\lambda = C/\sqrt{N}$, along with multiple methods to estimate $C$ (see Section \ref{sec: estimate lambda}), whereas \cite{tsang2025tradeoff} do not specifically characterize the weight in their framework.
With this form, we estimate the constant $C$ only once and can then apply HO directly to the same problem across multiple instances with varying sample sizes $N$, which is common in practice (see Section \ref{sec: estimate lambda}).
\textit{Third}, we establish the practical significance of HO by applying it to substantially improve scenario reduction (see Section \ref{sec: Scenario Reduction}), which is not explored by \cite{tsang2025tradeoff}.
Scenario reduction is a crucial and widely used approach for addressing computational challenges of the SAA model with many scenarios.
We show that HO can significantly enhance scenario reduction, outperforming the existing approach by improving approximation quality and reducing computational time.
Moreover, \cite{wang2025learning} and \cite{tsang2025alg} propose similar frameworks combining SAA and DRO, applying them to specific problems in machine learning and facility location, respectively. 


The remainder of this paper is organized as follows.
Section \ref{sec: Existing Models} illustrates existing models, including the stochastic programming model and the general DRO model.
In Section \ref{sec: HO}, we propose the HO model, establish its theoretical performance guarantees, and provide its computationally tractable reformulation.
Section \ref{sec: Scenario Reduction} demonstrates the applicability and strength of HO in scenario reduction.
Section \ref{sec: Numerical Experiments} provides extensive numerical experiments to validate the theoretical results and present practical insights.
Section \ref{sec: Conclusion} concludes the paper.
Notations are introduced in Appendix \ref{sec: notations}.
All proofs are presented in the Appendix if not specified.


\section{Existing Models} \label{sec: Existing Models}

Given a nonempty, convex and compact set $\mathcal{X} \subseteq \mathbb{R}^n$, an uncertainty set $\mathcal{S} \subseteq \mathbb{R}^m$, a function $f: \mathbb{R}^n \times \mathbb{R}^m \rightarrow \mathbb{R}$, and the joint probability distribution $\mathbb{P}$ of a random vector $\boldsymbol{\xi} \in \mathcal{S}$, 
we introduce the following stochastic program that seeks an $\mathbf{x} \in \mathcal{X}$ to minimize the expectation of $f ( \mathbf{x}, \boldsymbol{\xi} )$:
\vspace{-2mm}
\begin{align} \label{model0}
    V^* = \min_{\mathbf{x} \in \mathcal{X}} \mathbb{E}_{\mathbb{P}} \left[ f \left( \mathbf{x}, \boldsymbol{\xi} \right) \right] = \min_{\mathbf{x} \in \mathcal{X}} \int_{\boldsymbol{\xi} \in \mathcal{S}} f \left( \mathbf{x}, \boldsymbol{\xi} \right) \mathbb{P} \left( \boldsymbol{\xi} \right).
\end{align}
We let $F ( \mathbf{x} ) = \mathbb{E}_{\mathbb{P}} [ f ( \mathbf{x}, \boldsymbol{\xi} ) ]$ and assume it 
is well-defined with any $\mathbb{P}$.
That is, for any $\mathbf{x} \in \mathcal{X}$, the function $f(\mathbf{x}, \cdot)$ is measurable and $\mathbb{E}_{\mathbb{P}} [ |f ( \mathbf{x}, \boldsymbol{\xi} )| ] < \infty$. 
We also assume that for any $\mathbf{x} \in \mathcal{X}$, $f(\mathbf{x}, \cdot)$ is convex and Lipschitz continuous.
We use $\sigma^2(\mathbf{x})$ to denote the variance of $f ( \mathbf{x}, \boldsymbol{\xi} )$ for any $\mathbf{x} \in \mathcal{X}$.
Model \eqref{model0} can represent either a single-stage or multi-stage stochastic program. 
When it represents a multi-stage stochastic program, $\mathbf{x}$ denotes the first-stage decision variables and $f (\mathbf{x}, \boldsymbol{\xi})$ is the total cost with a given $\mathbf{x}$ and a realized scenario path $\boldsymbol{\xi}$ over multiple stages.



The distribution $\mathbb{P}$ is generally unknown in practice, leading to difficulty solving model \eqref{model0}.
However, $\mathbb{P}$ is often partially observable through a finite number of historical realizations of the random vector $\boldsymbol{\xi}$.
Let $\tilde{\boldsymbol{\xi}}_1, \ldots, \tilde{\boldsymbol{\xi}}_N$ be $N$ independently and identically distributed (iid) samples of $\boldsymbol{\xi}$, and $\mathbb{P}_0 = (1/N) \times \sum_{j=1}^N \delta_{\tilde{\boldsymbol{\xi}}_j}$, where $\delta_{\boldsymbol{\xi}}$ is the Dirac measure concentrating unit mass at $\boldsymbol{\xi} \in \mathbb{R}^m$.
With these samples, we can naturally use the SAA approach to approximate model \eqref{model0} as
\vspace{-2mm}
\begin{align} \label{model: SAA}
    V_N = \min_{\mathbf{x} \in \mathcal{X}} F_N \left( \mathbf{x} \right) 
    = \min_{\mathbf{x} \in \mathcal{X}} \mathbb{E}_{\mathbb{P}_0} \left[ f \left( \mathbf{x}, \boldsymbol{\xi} \right) \right] 
    = \min_{\mathbf{x} \in \mathcal{X}} \frac{1}{N} \sum_{j=1}^N f \left( \mathbf{x}, \tilde{\boldsymbol{\xi}}_j \right).
\end{align}

\vspace{-2mm}

The optimal value of model \eqref{model: SAA} (i.e., $V_N$) can converge to its counterpart of the original model \eqref{model0} (i.e., $V^*$) with probability $1$ (w.p. 1) when $N$ grows to infinity (see the proposition below), exhibiting the asymptotic optimality of model \eqref{model: SAA}.

\begin{proposition}[Proposition 5.2, \citealt{shapiro2021lectures}] \label{prop: converge}
    If $F_N(\mathbf{x})$ converges to $F(\mathbf{x})$ w.p. 1 as $N \rightarrow \infty$, uniformly on $\mathcal{X}$, then $V_N \rightarrow V^*$ w.p. 1 as $N \rightarrow \infty$.
\end{proposition}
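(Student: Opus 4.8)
The plan is to observe that the stochastic content of the statement is entirely carried by the hypothesis, so that the conclusion is really a deterministic fact about uniform convergence of functions. Concretely, I would fix a realization of the sample sequence $\tilde{\boldsymbol{\xi}}_1, \tilde{\boldsymbol{\xi}}_2, \ldots$ lying in the probability-one event on which $F_N \to F$ uniformly on $\mathcal{X}$, drop the ``w.p.\ 1'' qualifier, and reduce the claim to the following elementary assertion: if $\delta_N := \sup_{\mathbf{x} \in \mathcal{X}} | F_N(\mathbf{x}) - F(\mathbf{x}) | \to 0$, then $\inf_{\mathbf{x} \in \mathcal{X}} F_N(\mathbf{x}) \to \inf_{\mathbf{x} \in \mathcal{X}} F(\mathbf{x})$, i.e., $V_N \to V^*$.

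The key step is the two-sided inequality
\[
| V_N - V^* | \;=\; \Bigl| \inf_{\mathbf{x} \in \mathcal{X}} F_N(\mathbf{x}) - \inf_{\mathbf{x} \in \mathcal{X}} F(\mathbf{x}) \Bigr| \;\le\; \sup_{\mathbf{x} \in \mathcal{X}} \bigl| F_N(\mathbf{x}) - F(\mathbf{x}) \bigr| \;=\; \delta_N ,
\]
which I would prove directly: for every $\mathbf{x} \in \mathcal{X}$ we have $F_N(\mathbf{x}) \le F(\mathbf{x}) + \delta_N$, so taking the infimum over $\mathbf{x}$ gives $V_N \le V^* + \delta_N$; interchanging the roles of $F_N$ and $F$ yields $V^* \le V_N + \delta_N$; combining the two gives the displayed bound. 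One should note in passing that $\delta_N < \infty$ for $N$ large by the assumed convergence, and that $V^*$ is finite under the standing assumptions (e.g., $\mathcal{X}$ compact and $F$ finite-valued), so that the difference of infima is well defined and the manipulation is legitimate.

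Finally I would invoke the hypothesis that $\delta_N \to 0$ w.p.\ 1 as $N \to \infty$, and combine it with $|V_N - V^*| \le \delta_N$ via a squeeze argument to conclude $|V_N - V^*| \to 0$, i.e., $V_N \to V^*$, on the same probability-one event. This is precisely the assertion of the proposition, and note that neither convexity nor Lipschitz continuity of $f(\mathbf{x}, \cdot)$ is needed here beyond what guarantees finiteness of the optimal values.

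I do not anticipate a genuine obstacle, since the argument is short; the only points that require a little care are (i) the two-sided nature of the infimum inequality — it is tempting to establish only $V_N \le V^* + \delta_N$ and forget the symmetric bound, which is needed for the absolute value — and (ii) the measurability and finiteness bookkeeping, namely that $\delta_N$ is a bona fide random variable tending to $0$ (supplied verbatim by the hypothesis) and that both $V_N$ and $V^*$ are finite so that $|V_N - V^*|$ makes sense.
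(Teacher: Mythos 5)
Your argument is correct and is exactly the standard uniform-convergence-of-infima argument: the paper itself does not prove this proposition but cites it directly from \cite{shapiro2021lectures} (Proposition 5.2), whose proof proceeds along the same lines as yours, via the bound $|V_N - V^*| \le \sup_{\mathbf{x}\in\mathcal{X}}|F_N(\mathbf{x})-F(\mathbf{x})|$ on the almost-sure event of uniform convergence. Your handling of the two-sided inequality and the finiteness bookkeeping is sound, so nothing further is needed.
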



In addition, for any $\mathbf{x} \in \mathcal{X}$, we can use the value of $F_N(\mathbf{x})$ to estimate the range of the value of $F(\mathbf{x})$ in the following proposition.
\begin{proposition} \label{prop: interval}
Given any $\mathbf{x} \in \mathcal{X}$ and $\alpha \in [0,1]$, we have the following (approximate) $100 (1-\alpha) \%$ confidence interval for $F(\mathbf{x})$: 
$
[ F_N ( \mathbf{x} ) - z_{\frac{\alpha}{2}} \hat{\sigma}(\mathbf{x}) / \sqrt{N},
     \ F_N ( \mathbf{x} ) +  z_{\frac{\alpha}{2}} \hat{\sigma}(\mathbf{x}) / \sqrt{N}  ],
$
where $z_{\alpha/2} = \Phi^{-1}(1-\alpha/2)$, $\Phi$ denotes the cumulative distribution function (cdf) of the standard normal distribution, and $\hat{\sigma}^2(\mathbf{x}) = \sum_{j=1}^N ( f( \mathbf{x}, \tilde{\boldsymbol{\xi}}_j) - F_N(\mathbf{x}) )^2 / (N-1)$.
\end{proposition}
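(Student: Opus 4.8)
The plan is to derive the stated interval from the classical Central Limit Theorem (CLT) combined with Slutsky's theorem, with $\mathbf{x} \in \mathcal{X}$ held fixed throughout. First I would introduce the iid real-valued random variables $Y_j := f(\mathbf{x}, \tilde{\boldsymbol{\xi}}_j)$ for $j = 1, \dots, N$; since the samples $\tilde{\boldsymbol{\xi}}_1, \dots, \tilde{\boldsymbol{\xi}}_N$ are iid with law $\mathbb{P}$, the $Y_j$ are iid with mean $\mathbb{E}_{\mathbb{P}}[Y_1] = F(\mathbf{x})$ and variance $\sigma^2(\mathbf{x})$, which is finite under the standing integrability and Lipschitz-continuity assumptions; the degenerate case $\sigma^2(\mathbf{x}) = 0$ is trivial, since then $F_N(\mathbf{x}) = F(\mathbf{x})$ and the interval collapses to the singleton $\{F(\mathbf{x})\}$, so assume $\sigma^2(\mathbf{x}) > 0$. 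Observe that $F_N(\mathbf{x}) = \frac{1}{N}\sum_{j=1}^N Y_j$ is the sample mean of the $Y_j$ and $\hat{\sigma}^2(\mathbf{x})$ is precisely their Bessel-corrected sample variance.

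Next I would apply the Lindeberg--L\'evy CLT to get that $\sqrt{N}\,(F_N(\mathbf{x}) - F(\mathbf{x}))/\sigma(\mathbf{x})$ converges in distribution to a standard normal random variable as $N \to \infty$, and separately invoke the strong law of large numbers to conclude $\hat{\sigma}^2(\mathbf{x}) \to \sigma^2(\mathbf{x})$ w.p. 1 (consistency of the sample variance), hence $\sigma(\mathbf{x})/\hat{\sigma}(\mathbf{x}) \to 1$ w.p. 1. Writing
\[
T_N := \frac{\sqrt{N}\,(F_N(\mathbf{x}) - F(\mathbf{x}))}{\hat{\sigma}(\mathbf{x})} = \frac{\sqrt{N}\,(F_N(\mathbf{x}) - F(\mathbf{x}))}{\sigma(\mathbf{x})}\cdot\frac{\sigma(\mathbf{x})}{\hat{\sigma}(\mathbf{x})},
\]
Slutsky's theorem then gives that $T_N$ also converges in distribution to a standard normal.

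Finally I would translate this limit into a coverage statement. Because the standard normal cdf $\Phi$ is continuous, $\mathbb{P}(|T_N| \le z_{\alpha/2}) \to \Phi(z_{\alpha/2}) - \Phi(-z_{\alpha/2}) = 2\Phi(z_{\alpha/2}) - 1 = 1 - \alpha$, using $z_{\alpha/2} = \Phi^{-1}(1 - \alpha/2)$. Since the event $\{|T_N| \le z_{\alpha/2}\}$ is algebraically identical to $\{\,F_N(\mathbf{x}) - z_{\alpha/2}\hat{\sigma}(\mathbf{x})/\sqrt{N} \le F(\mathbf{x}) \le F_N(\mathbf{x}) + z_{\alpha/2}\hat{\sigma}(\mathbf{x})/\sqrt{N}\,\}$, the displayed interval contains $F(\mathbf{x})$ with probability converging to $1 - \alpha$, which is exactly the meaning of the (approximate) $100(1-\alpha)\%$ confidence interval. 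The only real subtlety --- and the reason for the ``(approximate)'' qualifier --- is that coverage holds asymptotically rather than exactly in finite samples; exact coverage would require $Y_1$ to be Gaussian and $z_{\alpha/2}$ to be replaced by the $t_{N-1}$ quantile. Beyond being careful to phrase the conclusion as a limit, everything else is a routine invocation of standard limit theorems, so I do not anticipate a substantive obstacle.
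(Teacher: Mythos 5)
Your proposal is correct and follows essentially the same route as the paper's proof: apply the CLT to $\sqrt{N}(F_N(\mathbf{x})-F(\mathbf{x}))$ and plug in the sample variance estimator $\hat{\sigma}^2(\mathbf{x})$ to obtain the approximate $100(1-\alpha)\%$ interval. Your write-up is in fact slightly more careful than the paper's, which implicitly substitutes $\hat{\sigma}(\mathbf{x})$ for $\sigma(\mathbf{x})$ without spelling out the Slutsky/consistency step that you make explicit.
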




Propositions \ref{prop: converge} and \ref{prop: interval} highlight that SAA offers performance guarantees when $N$ is large. 
Note that determining what qualifies as a ``large'' $N$ may be challenging (see Section \ref{sec:intro}).
Moreover, when $N$ is small, SAA's performance may be poor because it solely relies on limited data samples, which may not well approximate the true distribution of the uncertainty.
Next, we introduce the moment-based DRO that utilizes partial distributional information about uncertain parameters.
By leveraging this additional information, DRO maintains stable and robust performance across all sample sizes $N$, which is especially advantageous when $N$ is small.


The DRO framework assumes that the true distribution $\mathbb{P}$ of the random vector $\boldsymbol{\xi} \in \mathcal{S}\subseteq \mathbb{R}^m$ is ambiguous in a distributional set $\mathcal{D}$, by which one optimizes decisions against the worst-case distribution in $\mathcal{D}$ \citep{scarf1958min}.
We can formulate the DRO counterpart of model \eqref{model0} as:
\vspace{-2mm}
\begin{align} \tag{DRO} \label{DRO}
\min_{\boldsymbol{x} \in \mathcal{X}} \ \max_{\mathbb{P} \in \mathcal{D}} \ \mathbb{E}_{\mathbb{P}} \left[ f \left( \boldsymbol{x},\boldsymbol{\xi} \right) \right].
\end{align}

\vspace{-2mm}


We consider a moment-based ambiguity set $\mathcal{D}$ in the standard form \citep{wiesemann2014distributionally}: \vspace{-0.2cm}
\begin{align} \label{ambiguity_set}
\mathcal{D}
=\left\{ \mathbb{P} \in \mathcal{D}_0 \left( \mathbb{R}^m \times \mathbb{R}^h \right) \ \middle| \
\begin{array}{l}
\mathbb{E}_{\mathbb{P}} \left[ \mathbf{A}\boldsymbol{\xi} + \mathbf{B}\mathbf{u} \right] = \mathbf{b},  \ \
\mathbb{P} \left[ \left( \boldsymbol{\xi}, \mathbf{u} \right) \in \mathcal{C}_i \right] \in \left[ \underline{p}_i, \overline{p}_i \right], \ \forall \, i \in [I]
\end{array}
\right\},
\end{align}

\vspace{-0.2cm}

\noindent
which is explained as follows.
First, considering an additional auxiliary random vector $\mathbf{u} \in \mathbb{R}^h$ in $\mathcal{D}$, we generalize the notation $\mathbb{P}$ to represent the joint probability distribution of $\boldsymbol{\xi}$ and $\mathbf{u}$.
Second, the set $\mathcal{D}$ contains all distributions with mean values lying in an affine manifold characterized by $\mathbf{A} \in \mathbb{R}^{s \times m}$, $\mathbf{B} \in \mathbb{R}^{s \times h}$, and $\mathbf{b} \in \mathbb{R}^{s}$ and with $I$ conic representable confidence sets $\mathcal{C}_i$ for any $i \in [I]$. 
Third, for each $i \in [I]$, we have $\overline{p}_i, \underline{p}_i \in [0,1]$ and $\overline{p}_i \geq \underline{p}_i$ and define $\mathcal{C}_i$ as
\vspace{-2mm}
\begin{align*}
    \mathcal{C}_i = \left\{ \left( \boldsymbol{\xi}, \mathbf{u} \right) \in \mathbb{R}^m \times \mathbb{R}^h \ | \  \mathbf{c}_i - \left( \mathbf{C}_i \boldsymbol{\xi} + \mathbf{D}_i \mathbf{u} \right) \in \mathcal{K}_i \right\},
\end{align*}
\vspace{-9mm}

\noindent
where $\mathbf{C}_i \in \mathbb{R}^{L_i \times m}$, $\mathbf{D}_i \in \mathbb{R}^{L_i \times h}$, $\mathbf{c}_i \in \mathbb{R}^{L_i}$, and $\mathcal{K}_i$ is a proper cone. 
Note that including the auxiliary random vector $\mathbf{u}$ helps model various structural information about the marginal distribution of $\boldsymbol{\xi}$ while ensuring all the information about the true marginal distribution of $\boldsymbol{\xi}$ (denoted by $\mathbb{P}_{\boldsymbol{\xi}}^*$) is included in $\mathcal{D}$, i.e., $\mathbb{P}_{\boldsymbol{\xi}}^* \in \Pi_{\boldsymbol{\xi}} \mathcal{D}$.
We can recognize several popular moment-based ambiguity sets in the literature as special cases of the ambiguity set $\mathcal{D}$ in \eqref{ambiguity_set} (see Appendix \ref{sec: Special Cases} for details).

Relying solely on partial distributional information, which may be collected from domain knowledge or inferred from other information sources, DRO maintains stable and robust performance for any sample size $N$, making it especially advantageous when $N$ is limited.
However, unlike the SAA approach that has asymptotic optimality, its advantages diminish as $N$ grows.
Note that determining what qualifies as a ``small'' $N$ may be challenging (see Section \ref{sec:intro}).
In the following section, we harmonize the SAA and DRO approaches to maintain the benefits of both approaches without worrying whether $N$ is large or small.

\section{Harmonizing Optimization} \label{sec: HO}

In this section, we propose a novel approach (denoted by the HO approach) that integrates \textit{data} and \textit{partial distributional information} (e.g., domain knowledge) by harmonizing the SAA and DRO approaches. 
This ensures consistent and significant performance across any possible values of $N$ (i.e., sample size), thereby allowing the HO approach to be used directly with any data size.

\vspace{-2mm}

\subsection{Introduction of HO} \label{sec:intro-ho}

In our HO approach, which integrates data and partial distributional information, we use a parameter $\lambda \in [0, 1]$ to measure the weight of \textit{information} and $1-\lambda$ to measure the weight of \textit{data}.
Intuitively, when $N$ is small, $\lambda$ should be relatively large to amplify the influence of information and mitigate the impact of data.
Conversely, when $N$ is large, $\lambda$ should remain relatively small to emphasize the significance of data and limit the influence of information.
Thus, we set $\lambda = C/\sqrt{N}$ in alignment with this rationale, ensuring harmony between data and information.
Here, $C$ is a predetermined fixed constant, and we will discuss how to determine it in detail in Section \ref{sec: estimate lambda}.

Given $\lambda \in [0, 1]$ and $N$ iid samples of $\boldsymbol{\xi}$ defined in Section \ref{sec: Existing Models}, we formulate our HO model as
\begin{align} \tag{HO} \label{model_harmonizing}
\Gamma(\lambda) 
= \min_{\mathbf{x} \in \mathcal{X}} F_{\lambda}\left( \mathbf{x} \right)
= \min_{\mathbf{x} \in \mathcal{X}} \left\{ \left( 1 - \lambda \right) \mathbb{E}_{\mathbb{P}_0} \left[ f \left( \mathbf{x}, \boldsymbol{\xi} \right) \right] + \lambda \max_{\mathbb{P} \in \mathcal{D}} \mathbb{E}_{\mathbb{P}}\left[ f \left( \mathbf{x}, \boldsymbol{\xi} \right) \right] \right\},
\end{align}
where $ \mathbb{P}_0 $ and $ \mathcal{D} $ are defined in Section \ref{sec: Existing Models}. 
The following proposition shows that we can equivalently transform model \eqref{model_harmonizing} into a DRO model, where the decision is optimized against the worst-case distribution within a parameterized ambiguity set.


\begin{proposition} \label{prop: reform_to_dro}
Model \eqref{model_harmonizing} can be reformulated as
$\min_{\boldsymbol{x} \in \mathcal{X}} \ \max_{\mathbb{P}_{\textup H} \in \mathcal{D}_{\textup H} ( \lambda )} \ \mathbb{E}_{\mathbb{P}_{\textup H}} [ f ( \boldsymbol{x},\boldsymbol{\xi} ) ]$,
where $\mathcal{D}_{\textup H}(\lambda) = \{ \mathbb{P}_{\textup H} \ | \ \mathbb{P}_{\textup H} = (1-\lambda)\mathbb{P}_{0}  + \lambda \mathbb{P}_{\boldsymbol{\xi}}, \ \mathbb{P}_{\boldsymbol{\xi}} \in \Pi_{\boldsymbol{\xi}}\mathcal{D} \}$.
\end{proposition}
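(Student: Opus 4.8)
The plan is to fix an arbitrary $\mathbf{x} \in \mathcal{X}$, rewrite the HO objective $F_{\lambda}(\mathbf{x})$ from model \eqref{model_harmonizing} as a single worst-case expectation over the mixture set $\mathcal{D}_{\textup H}(\lambda)$, and then take the minimum over $\mathbf{x}$. The three ingredients are: (i) the inner worst-case term only sees the $\boldsymbol{\xi}$-marginal of distributions in $\mathcal{D}$; (ii) the data term $(1-\lambda)\mathbb{E}_{\mathbb{P}_0}[f(\mathbf{x},\boldsymbol{\xi})]$ is a constant in the maximization and can be absorbed into it since $\lambda \ge 0$; and (iii) linearity of expectation identifies the convex combination of the two expectations with the expectation under the mixture distribution.

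Concretely, I would first observe that since $f(\mathbf{x},\cdot)$ does not involve the auxiliary vector $\mathbf{u}$, we have $\mathbb{E}_{\mathbb{P}}[f(\mathbf{x},\boldsymbol{\xi})] = \mathbb{E}_{\mathbb{P}_{\boldsymbol{\xi}}}[f(\mathbf{x},\boldsymbol{\xi})]$ for every $\mathbb{P} \in \mathcal{D}$ whose $\boldsymbol{\xi}$-marginal is $\mathbb{P}_{\boldsymbol{\xi}}$ (by the tower property / change of variables), and, conversely, every element of $\Pi_{\boldsymbol{\xi}}\mathcal{D}$ arises as such a marginal by definition of the projection. Hence $\max_{\mathbb{P}\in\mathcal{D}}\mathbb{E}_{\mathbb{P}}[f(\mathbf{x},\boldsymbol{\xi})] = \max_{\mathbb{P}_{\boldsymbol{\xi}}\in\Pi_{\boldsymbol{\xi}}\mathcal{D}}\mathbb{E}_{\mathbb{P}_{\boldsymbol{\xi}}}[f(\mathbf{x},\boldsymbol{\xi})]$. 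Since $\lambda \ge 0$ and the first summand is independent of $\mathbb{P}_{\boldsymbol{\xi}}$,
\[
F_{\lambda}(\mathbf{x}) = \max_{\mathbb{P}_{\boldsymbol{\xi}}\in\Pi_{\boldsymbol{\xi}}\mathcal{D}} \left\{ (1-\lambda)\mathbb{E}_{\mathbb{P}_0}[f(\mathbf{x},\boldsymbol{\xi})] + \lambda\,\mathbb{E}_{\mathbb{P}_{\boldsymbol{\xi}}}[f(\mathbf{x},\boldsymbol{\xi})] \right\}.
\]
For $\mathbb{P}_{\boldsymbol{\xi}}\in\Pi_{\boldsymbol{\xi}}\mathcal{D}$ and $\lambda\in[0,1]$, the mixture $\mathbb{P}_{\textup H} := (1-\lambda)\mathbb{P}_0 + \lambda\mathbb{P}_{\boldsymbol{\xi}}$ is a probability distribution, and by linearity of expectation $\mathbb{E}_{\mathbb{P}_{\textup H}}[f(\mathbf{x},\boldsymbol{\xi})] = (1-\lambda)\mathbb{E}_{\mathbb{P}_0}[f(\mathbf{x},\boldsymbol{\xi})] + \lambda\,\mathbb{E}_{\mathbb{P}_{\boldsymbol{\xi}}}[f(\mathbf{x},\boldsymbol{\xi})]$. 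As $\mathbb{P}_{\boldsymbol{\xi}}$ ranges over $\Pi_{\boldsymbol{\xi}}\mathcal{D}$, the map $\mathbb{P}_{\boldsymbol{\xi}} \mapsto \mathbb{P}_{\textup H}$ traces out precisely $\mathcal{D}_{\textup H}(\lambda)$ by its defining description, so $F_{\lambda}(\mathbf{x}) = \max_{\mathbb{P}_{\textup H}\in\mathcal{D}_{\textup H}(\lambda)}\mathbb{E}_{\mathbb{P}_{\textup H}}[f(\mathbf{x},\boldsymbol{\xi})]$; minimizing over $\mathbf{x}\in\mathcal{X}$ gives the stated reformulation.

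The only genuinely delicate step is (i) — collapsing the worst case over joint distributions in $\mathcal{D}$ to one over the marginal set $\Pi_{\boldsymbol{\xi}}\mathcal{D}$ — which I would spell out via the marginalization identity above; the rest is routine. Well-definedness of every expectation appearing in the argument, including those under the finite mixtures $\mathbb{P}_{\textup H}$, is guaranteed by the standing assumption that $\mathbb{E}_{\mathbb{P}}[|f(\mathbf{x},\boldsymbol{\xi})|] < \infty$ for all $\mathbf{x}\in\mathcal{X}$ and all $\mathbb{P}$. I would also note that the identity holds verbatim whether the worst case is attained or interpreted as a supremum, so no compactness of $\mathcal{D}$ is needed for this proposition.
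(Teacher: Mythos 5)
Your proposal is correct and follows essentially the same route as the paper's proof: pull the data term inside the maximization (using $\lambda \geq 0$), reduce the worst case over $\mathcal{D}$ to one over the marginal set $\Pi_{\boldsymbol{\xi}}\mathcal{D}$ since $f$ depends only on $\boldsymbol{\xi}$, identify the convex combination of expectations with the expectation under the mixture, and recognize the resulting family of mixtures as $\mathcal{D}_{\textup H}(\lambda)$ by definition. The paper performs these steps in a slightly different order (absorbing the constant before marginalizing), but the argument is the same.
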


Clearly, when $\lambda$ varies, the size of the ambiguity set $\mathcal{D}_{\textup H} ( \lambda )$ varies accordingly. 
We have the following proposition.

\begin{proposition} \label{prop: ambiguity_set_size}
If $\mathbb{P}_0 \in \Pi_{\boldsymbol{\xi}}\mathcal{D}$, then $\mathcal{D}_{\textup H}(\lambda_2) \subseteq \mathcal{D}_{\textup H}(\lambda_1)$ for any $0 \leq \lambda_2 \leq \lambda_1 \leq 1$.
\end{proposition}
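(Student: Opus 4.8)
The plan is to prove the containment $\mathcal{D}_{\textup H}(\lambda_2) \subseteq \mathcal{D}_{\textup H}(\lambda_1)$ directly by the membership definition of $\mathcal{D}_{\textup H}(\cdot)$ given just above: fix $0 \le \lambda_2 \le \lambda_1 \le 1$, take an arbitrary $\mathbb{Q} \in \mathcal{D}_{\textup H}(\lambda_2)$, and produce a distribution $\mathbb{P}'_{\boldsymbol{\xi}} \in \Pi_{\boldsymbol{\xi}}\mathcal{D}$ with $\mathbb{Q} = (1-\lambda_1)\mathbb{P}_0 + \lambda_1 \mathbb{P}'_{\boldsymbol{\xi}}$, which is exactly the statement that $\mathbb{Q} \in \mathcal{D}_{\textup H}(\lambda_1)$. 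The case $\lambda_1 = 0$ is immediate, since then $\lambda_2 = 0$ as well and both sets reduce to $\{\mathbb{P}_0\}$ (which is nonempty by the hypothesis $\mathbb{P}_0 \in \Pi_{\boldsymbol{\xi}}\mathcal{D}$), so we may assume $\lambda_1 > 0$.

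By definition of $\mathcal{D}_{\textup H}(\lambda_2)$, write $\mathbb{Q} = (1-\lambda_2)\mathbb{P}_0 + \lambda_2 \mathbb{P}_{\boldsymbol{\xi}}$ for some $\mathbb{P}_{\boldsymbol{\xi}} \in \Pi_{\boldsymbol{\xi}}\mathcal{D}$. Matching this with $(1-\lambda_1)\mathbb{P}_0 + \lambda_1 \mathbb{P}'_{\boldsymbol{\xi}}$ and solving for $\mathbb{P}'_{\boldsymbol{\xi}}$ (legitimate since $\lambda_1 > 0$) forces
\[
\mathbb{P}'_{\boldsymbol{\xi}} = \frac{\lambda_1 - \lambda_2}{\lambda_1}\,\mathbb{P}_0 + \frac{\lambda_2}{\lambda_1}\,\mathbb{P}_{\boldsymbol{\xi}}.
\]
Because $0 \le \lambda_2 \le \lambda_1$, the coefficients $\tfrac{\lambda_1-\lambda_2}{\lambda_1}$ and $\tfrac{\lambda_2}{\lambda_1}$ are nonnegative and sum to one, so $\mathbb{P}'_{\boldsymbol{\xi}}$ is a convex combination of $\mathbb{P}_0$ and $\mathbb{P}_{\boldsymbol{\xi}}$; in particular it is a genuine probability measure. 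It then only remains to verify that $\mathbb{P}'_{\boldsymbol{\xi}} \in \Pi_{\boldsymbol{\xi}}\mathcal{D}$.

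This last verification is the only place that needs a word of justification, and it is where the hypothesis $\mathbb{P}_0 \in \Pi_{\boldsymbol{\xi}}\mathcal{D}$ is used. The ambiguity set $\mathcal{D}$ in \eqref{ambiguity_set} is described by constraints that are affine in $\mathbb{P}$ — the moment equalities $\mathbb{E}_{\mathbb{P}}[\mathbf{A}\boldsymbol{\xi} + \mathbf{B}\mathbf{u}] = \mathbf{b}$ and the probability bounds $\mathbb{P}[(\boldsymbol{\xi},\mathbf{u}) \in \mathcal{C}_i] \in [\underline{p}_i, \overline{p}_i]$ — hence $\mathcal{D}$ is convex; and since the marginalization operator $\Pi_{\boldsymbol{\xi}}$ is linear, the image $\Pi_{\boldsymbol{\xi}}\mathcal{D}$ is convex as well. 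As $\mathbb{P}_0 \in \Pi_{\boldsymbol{\xi}}\mathcal{D}$ (by assumption) and $\mathbb{P}_{\boldsymbol{\xi}} \in \Pi_{\boldsymbol{\xi}}\mathcal{D}$, their convex combination $\mathbb{P}'_{\boldsymbol{\xi}}$ lies in $\Pi_{\boldsymbol{\xi}}\mathcal{D}$, and therefore $\mathbb{Q} = (1-\lambda_1)\mathbb{P}_0 + \lambda_1 \mathbb{P}'_{\boldsymbol{\xi}} \in \mathcal{D}_{\textup H}(\lambda_1)$, which completes the proof. I do not anticipate a real obstacle here; the single subtle point is to make the convexity of $\Pi_{\boldsymbol{\xi}}\mathcal{D}$ explicit and to observe that without $\mathbb{P}_0 \in \Pi_{\boldsymbol{\xi}}\mathcal{D}$ the argument fails, since $\mathbb{P}'_{\boldsymbol{\xi}}$ would then no longer be forced to be a convex combination of two members of $\Pi_{\boldsymbol{\xi}}\mathcal{D}$.
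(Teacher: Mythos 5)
Your proof is correct and follows essentially the same route as the paper's: the same rescaled mixture $\mathbb{P}'_{\boldsymbol{\xi}} = (1-\lambda_2/\lambda_1)\mathbb{P}_0 + (\lambda_2/\lambda_1)\mathbb{P}_{\boldsymbol{\xi}}$, combined with convexity of $\Pi_{\boldsymbol{\xi}}\mathcal{D}$ and the hypothesis $\mathbb{P}_0 \in \Pi_{\boldsymbol{\xi}}\mathcal{D}$. Your only additions are cosmetic but welcome: you treat the degenerate case $\lambda_1 = 0$ explicitly and justify the convexity of $\Pi_{\boldsymbol{\xi}}\mathcal{D}$ rather than merely asserting it.
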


Proposition \ref{prop: ambiguity_set_size} offers decision-makers guidance on determining the weights of \emph{data} and \emph{information} in different cases of historical sample sizes.
Specifically, Proposition \ref{prop: ambiguity_set_size} offers a new perspective on the intuition behind the decrease in $\lambda$ as $N$ increases.
When $N$ grows, we have more available data to approximate $\mathbb{P}$, enabling us to make a more accurate decision.
For such a case, we need a small $\lambda$ to focus on the significance of \textit{data} and decrease the influence of \textit{information}. 
It follows that the parameterized set $\mathcal{D}_{\textup H}(\lambda)$ shrinks, thereby diminishing the conservatism of model \eqref{model_harmonizing} and leading to a more accurate decision.

Unlike \cite{tsang2025tradeoff}, we do not consider a Wasserstein ambiguity set $\mathcal{D}$ because the following proposition shows that combining SAA and Wasserstein-based DRO is equivalent to using Wasserstein-based DRO solely.
Specifically, we define $\mathcal{D}_{\textup W}(r_{\textup H}) = \{\mathbb{P} \ | \ W(\mathbb{P}, \mathbb{P}_0) \leq r_{\textup H} \}$, where $W: \mathcal{D}_0 (\mathbb{R}^m) \times \mathcal{D}_0 (\mathbb{R}^m) \rightarrow \mathbb{R}_+$ denotes the 1-Wasserstein metric, and $r_{\textup H} \in \mathbb{R}_+$ is the radius.
\begin{proposition} \label{prop: wasserstein_equivalence}
    For any $\lambda \in [0, 1]$ and $r_{\textup H} \in \mathbb{R}_+$, setting $r_{\textup W} = \lambda r_{\textup H}$, we then have
    \begin{align*}
        \left( 1 - \lambda \right) \mathbb{E}_{\mathbb{P}_0} \left[ f \left( \mathbf{x}, \boldsymbol{\xi} \right) \right] + \lambda \max_{\mathbb{P} \in \mathcal{D}_{\textup W}(r_{\textup H})} \mathbb{E}_{\mathbb{P}}\left[ f \left( \mathbf{x}, \boldsymbol{\xi} \right) \right] = \max_{\mathbb{P} \in \mathcal{D}_{\textup W}(r_{\textup W})} \mathbb{E}_{\mathbb{P}}\left[ f \left( \mathbf{x}, \boldsymbol{\xi} \right) \right], \ \forall x \in \mathcal{X}.
    \end{align*}
\end{proposition}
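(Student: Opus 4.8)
The plan is to exploit the well-known convexity/translation structure of the 1-Wasserstein worst-case expectation. The key fact I would use is that for a Lipschitz integrand, the optimal-value functional $r \mapsto \max_{W(\mathbb{P},\mathbb{P}_0)\le r}\mathbb{E}_{\mathbb{P}}[f(\mathbf{x},\boldsymbol{\xi})]$ is concave and positively homogeneous in a suitable sense after subtracting the nominal value; more precisely, the Kantorovich--Rubinstein duality gives
\begin{align*}
\max_{\mathbb{P}\in\mathcal{D}_{\textup W}(r)}\mathbb{E}_{\mathbb{P}}[f(\mathbf{x},\boldsymbol{\xi})]
= \mathbb{E}_{\mathbb{P}_0}[f(\mathbf{x},\boldsymbol{\xi})] + r\cdot \mathrm{Lip}(f(\mathbf{x},\cdot))
\end{align*}
when $\mathcal{S}=\mathbb{R}^m$ (and more generally an analogous identity with the effective Lipschitz modulus over $\mathcal{S}$), where $\mathrm{Lip}(f(\mathbf{x},\cdot))$ is the Lipschitz constant of $f(\mathbf{x},\cdot)$. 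This is exactly the linear-in-$r$ behaviour that makes the claimed identity hold.

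First I would invoke this duality representation for both sides. For the right-hand side with radius $r_{\textup W}=\lambda r_{\textup H}$, it equals $\mathbb{E}_{\mathbb{P}_0}[f(\mathbf{x},\boldsymbol{\xi})] + \lambda r_{\textup H}\,\mathrm{Lip}(f(\mathbf{x},\cdot))$. For the left-hand side, the second term equals $\lambda\big(\mathbb{E}_{\mathbb{P}_0}[f(\mathbf{x},\boldsymbol{\xi})] + r_{\textup H}\,\mathrm{Lip}(f(\mathbf{x},\cdot))\big)$, so adding $(1-\lambda)\mathbb{E}_{\mathbb{P}_0}[f(\mathbf{x},\boldsymbol{\xi})]$ cancels the $\lambda$-weighted nominal term and again yields $\mathbb{E}_{\mathbb{P}_0}[f(\mathbf{x},\boldsymbol{\xi})] + \lambda r_{\textup H}\,\mathrm{Lip}(f(\mathbf{x},\cdot))$. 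Thus both sides coincide pointwise in $\mathbf{x}\in\mathcal{X}$, which is the assertion. Since $f(\mathbf{x},\cdot)$ is assumed convex and Lipschitz for every $\mathbf{x}$, the duality identity applies uniformly.

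The main obstacle is stating the Wasserstein worst-case value identity cleanly when $\mathcal{S}\subsetneq\mathbb{R}^m$: the clean "nominal value plus $r$ times Lipschitz constant" formula is exact on all of $\mathbb{R}^m$, but on a restricted support $\mathcal{S}$ one must be careful that the relevant quantity is the supremum of difference quotients of $f(\mathbf{x},\cdot)$ over $\mathcal{S}$ (the "local" Lipschitz modulus relative to $\mathcal{S}$), and that the transport plans stay supported in $\mathcal{S}$. I would handle this either by noting that $f(\mathbf{x},\cdot)$ being convex and Lipschitz on $\mathcal{S}$ extends to a Lipschitz function on $\mathbb{R}^m$ with the same constant (McShane--Whitney extension), so the formula carries over with the $\mathcal{S}$-Lipschitz modulus, or simply by working with whatever regularity convention the paper adopts for the Wasserstein metric on $\mathcal{D}_0(\mathbb{R}^m)$. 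Either way, the structural cancellation argument above is robust: the only property genuinely used is that the worst-case value is affine in the radius with slope independent of the radius, and the rest is bookkeeping. I would also remark that the converse direction of the statement (given any Wasserstein-DRO radius, find a TRO pair producing it) is immediate from the same identity, which explains the claim in the surrounding text that combining SAA and Wasserstein-DRO is "essentially equivalent" to Wasserstein-DRO alone.
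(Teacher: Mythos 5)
Your proposal is correct and follows essentially the same route as the paper: both rest on the identity $\max_{\mathbb{P} \in \mathcal{D}_{\textup W}(r)} \mathbb{E}_{\mathbb{P}}[f(\mathbf{x},\boldsymbol{\xi})] = \mathbb{E}_{\mathbb{P}_0}[f(\mathbf{x},\boldsymbol{\xi})] + r\,\mathrm{lip}(f(\mathbf{x},\cdot))$ (the paper cites Proposition 6.17 of \citealt{kuhn2025dro}; you invoke Kantorovich--Rubinstein duality), followed by the same cancellation of the $\lambda$-weighted nominal term. Your support-restriction caveat is moot here since the paper's $\mathcal{D}_{\textup W}$ is defined over $\mathcal{D}_0(\mathbb{R}^m)$.
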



\vspace{-2mm}

\subsection{Finite-sample Performance Guarantee}

Proposition \ref{prop: ambiguity_set_size} reveals the impact of the weight parameter $\lambda$ on the size of the ambiguity set $\mathcal{D}_{\textup H}(\lambda)$, which in turn affects the performance of model \eqref{model_harmonizing}. 
On the one hand, if the weight $\lambda$ is too large, then the ambiguity set $\mathcal{D}_{\textup H}(\lambda)$ becomes very large, potentially leading to an overly conservative solution.
On the other hand, if the weight $\lambda$ is too small, then the model loses the value of information, potentially failing to overcome the limitations of SAA.
Therefore, it is crucial to determine an appropriate value for $\lambda$ so that an optimal solution with a good performance guarantee can be obtained. 
Since we typically have a finite number of historical samples in practice, finding the appropriate value for $\lambda$ in the finite-sample case becomes even more important.
In this section, from a statistical point of view, we estimate $\lambda$ with respect to any finite sample size $N$ to ensure a performance guarantee for model \eqref{model_harmonizing}.

Recall that the ambiguity set $\mathcal{D}$ defined in \eqref{ambiguity_set} is constructed based on moment information (e.g., mean vector and covariance matrix) about uncertainties.
All the distributions within this set satisfy the same prescribed conditions on their mean vector and covariance matrix, but differences still exist between these distributions.
To quantify such differences, we use the following Gelbrich distance, calculated based on the distributions' mean vectors and covariance matrices.

\begin{definition}[Gelbrich distance]
The Gelbrich distance $\mathcal{G}$ between two mean-covariance pairs $(\boldsymbol{\mu}_1, \boldsymbol{\Sigma}_1)$ and $(\boldsymbol{\mu}_2, \boldsymbol{\Sigma}_2)$ is calculated by
\vspace{-0.4cm}
\begin{align*}
    \mathcal{G}\left( (\boldsymbol{\mu}_1, \boldsymbol{\Sigma}_1), (\boldsymbol{\mu}_2, \boldsymbol{\Sigma}_2) \right) 
    = \bigg(  \| \boldsymbol{\mu}_1-\boldsymbol{\mu}_2 \|^2 + \textup{Tr}\Big( \boldsymbol{\Sigma}_1 + \boldsymbol{\Sigma}_2 - 2 \left( \boldsymbol{\Sigma}_2^{\frac{1}{2}} \boldsymbol{\Sigma}_1 \boldsymbol{\Sigma}_2^{\frac{1}{2}} \right)^{\frac{1}{2}} \Big) \bigg)^{\frac{1}{2}}.
\end{align*}
\end{definition}

The Gelbrich distance is a metric on $\mathbb{R}^m \times \mathbb{S}_+^{m}$;
that is, $\mathcal{G}$ is non-negative, symmetric and subadditive, and equals $0$ if and only if $(\boldsymbol{\mu}_1, \boldsymbol{\Sigma}_1) = (\boldsymbol{\mu}_2, \boldsymbol{\Sigma}_2)$ \citep{givens1984class}.
Let $\boldsymbol{\mu}_0$ and $\boldsymbol{\Sigma}_0$ denote the mean value and covariance matrix of $\boldsymbol{\xi}$ under the empirical distribution $\mathbb{P}_0$, respectively; that is, $\boldsymbol{\mu}_0 = \mathbb{E}_{\mathbb{P}_0}[\boldsymbol{\xi}]$ and $\boldsymbol{\Sigma}_0 = \mathbb{E}_{\mathbb{P}_0}[( \boldsymbol{\xi} - \boldsymbol{\mu}_0)( \boldsymbol{\xi} - \boldsymbol{\mu}_0)^{\top} ]$.
Let $\boldsymbol{\mu}({\mathbb{P}}_{\textup H})$ and $\boldsymbol{\Sigma}({\mathbb{P}}_{\textup H})$ denote the mean value and covariance matrix of $\boldsymbol{\xi}$ under any distribution $\mathbb{P}_{\textup H}$, respectively.
With any distance $\epsilon > 0$, we
define
\begin{align}
    \lambda^* = \argmin \left\{ \lambda \ \middle| \ \min_{\mathbb{P}_{\textup H} \in \partial\mathcal{D}_{\textup H}(\lambda)} \mathcal{G}\left(
        (\boldsymbol{\mu}_0, \boldsymbol{\Sigma}_0), (\boldsymbol{\mu}(\mathbb{P}_{\textup H}), \boldsymbol{\Sigma}(\mathbb{P}_{\textup H}))
    \right) \geq \epsilon \right\}, \label{best_lambda}
\end{align}
where $\partial\mathcal{D}_{\textup H}(\lambda)$ denotes the boundary of $\mathcal{D}_{\textup H}(\lambda)$.
Under a common assumption below on the true distribution $\mathbb{P}$, the ambiguity set $\mathcal{D}_{\textup H}(\lambda^*)$ provides attractive performance guarantees.

\begin{assumption} \label{ass: light tail}
    We assume $\mathbb{P}$ is a light-tailed distribution; that is, there exist an exponent $a > 2$ and $b > 0$ such that $E = \mathbb{E}_{\mathbb{P}} \left[ \exp( b \| \boldsymbol{\xi} \|^a) \right] < \infty$.
\end{assumption}

Assumption \ref{ass: light tail}, which trivially holds because the support set $\mathcal{S}$ is compact \citep{esfahani2018data}, requires that the tail of $\mathbb{P}$ decays at an exponential rate.
Let $\mathcal{P}$ denote an $m$-fold product of the true distribution $\mathbb{P}$ on $\mathcal{S}$. 
We show a finite-sample performance guarantee in the form of including $\mathbb{P}$ within the ambiguity set $\mathcal{D}_{\textup H}(\lambda^*)$ below.

\begin{proposition} \label{prop: non_asymptotic}
If $\mathbb{P}_0 \in \Pi_{\boldsymbol{\xi}}\mathcal{D}$, then for all $N \geq 1$, $m \neq 4$, and $\epsilon > 0$, the true probability distribution $\mathbb{P}$ is included in $\mathcal{D}_{\textup H}(\lambda^*)$ with a confidence at $1-\beta$; that is,
\vspace{-2mm}
\begin{align}
    \mathcal{P} \left( \mathbb{P} \in \mathcal{D}_{\textup H}(\lambda^*) \right) \geq 1-\beta, 
    \ \text{where} \ 
    \beta = \begin{cases} 
        c_1\exp \left( -c_2N\epsilon^{\max\{\frac{m}{2},2\}} \right), \ & \epsilon \leq 1 \\
        c_1\exp \left( -c_2N\epsilon^{\frac{a}{2}} \right), \ & \epsilon > 1
    \end{cases},  \label{probability_in}
\end{align}
where $c_1$ and $c_2$ are positive constants depending on $m$ and $a$, $b$, and $E$ introduced in Assumption \ref{ass: light tail}. 
Moreover, for any $\lambda \geq \lambda^*$, we have $    \mathcal{P} ( \mathbb{P} \in \mathcal{D}_{\textup H}(\lambda) )
    \geq
    \mathcal{P} ( \mathbb{P} \in \mathcal{D}_{\textup H}(\lambda^*) )$.
For any $\lambda \in [1-\beta, 1]$, we also have $\mathcal{P} ( \mathbb{P} \in \mathcal{D}_{\textup H}(\lambda) ) \geq 1-\beta$.
\end{proposition}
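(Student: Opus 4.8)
The plan is to deduce the inclusion from a deterministic geometric fact together with a measure-concentration bound, and then to obtain the two monotonicity consequences from Proposition~\ref{prop: ambiguity_set_size}. What matters about $\mathbb{P}$ and about the members of $\mathcal{D}_{\textup H}(\lambda)$ is their mean-covariance pair, so I would work in the moment space $\mathbb{R}^m\times\mathbb{S}_+^m$, on which $\mathcal{G}$ is a genuine metric. First I would show that on the event $\mathcal{E}_\epsilon:=\{W_2(\mathbb{P}_0,\mathbb{P})\le\epsilon\}$, where $W_2$ is the $2$-Wasserstein distance, one has $\mathbb{P}\in\mathcal{D}_{\textup H}(\lambda^*)$; hence $\mathcal{P}(\mathbb{P}\in\mathcal{D}_{\textup H}(\lambda^*))\ge\mathcal{P}(\mathcal{E}_\epsilon)$, and it remains to bound $\mathcal{P}(\mathcal{E}_\epsilon^{\mathrm c})\le\beta$.

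\emph{Geometric step.} Since $\mathbb{P}_0\in\Pi_{\boldsymbol{\xi}}\mathcal{D}$, choosing $\mathbb{P}_{\boldsymbol{\xi}}=\mathbb{P}_0$ shows $\mathbb{P}_0\in\mathcal{D}_{\textup H}(\lambda)$ for all $\lambda$, so $(\boldsymbol{\mu}_0,\boldsymbol{\Sigma}_0)$ lies in the moment image of every $\mathcal{D}_{\textup H}(\lambda)$, and, for $\lambda>0$, in its interior in the non-degenerate case where $\mathbb{P}_0$ is not a boundary point of $\Pi_{\boldsymbol{\xi}}\mathcal{D}$. By Proposition~\ref{prop: ambiguity_set_size} these sets are nested and nondecreasing in $\lambda$, so $\phi(\lambda):=\min_{\mathbb{P}_{\textup H}\in\partial\mathcal{D}_{\textup H}(\lambda)}\mathcal{G}((\boldsymbol{\mu}_0,\boldsymbol{\Sigma}_0),(\boldsymbol{\mu}(\mathbb{P}_{\textup H}),\boldsymbol{\Sigma}(\mathbb{P}_{\textup H})))$ is nondecreasing, and $\lambda^*$ in \eqref{best_lambda} is the smallest weight with $\phi(\lambda^*)\ge\epsilon$ (with $\lambda^*=1$ by convention if no $\lambda\le1$ achieves this, in which case $\mathbb{P}\in\mathcal{D}_{\textup H}(1)$ trivially because $\mathbb{P}\in\Pi_{\boldsymbol{\xi}}\mathcal{D}$). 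The crux is to show that the closed Gelbrich $\epsilon$-ball about $(\boldsymbol{\mu}_0,\boldsymbol{\Sigma}_0)$ is contained in the moment image of $\mathcal{D}_{\textup H}(\lambda^*)$: since the center is interior while every boundary point of that image is at $\mathcal{G}$-distance $\ge\phi(\lambda^*)\ge\epsilon$, a ``segment-meets-the-boundary'' argument --- any curve in the image joining $(\boldsymbol{\mu}_0,\boldsymbol{\Sigma}_0)$ to a point at $\mathcal{G}$-distance $<\epsilon$ would cross the boundary at a point strictly closer than $\epsilon$, a contradiction --- rules out any $\mathcal{G}$-$\epsilon$-close point outside the image, once one checks that such joining curves exist inside the image and that $\mathcal{G}$ to the center does not overshoot $\epsilon$ along them. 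On $\mathcal{E}_\epsilon$, the estimate $\mathcal{G}((\boldsymbol{\mu}_0,\boldsymbol{\Sigma}_0),(\boldsymbol{\mu}(\mathbb{P}),\boldsymbol{\Sigma}(\mathbb{P})))\le W_2(\mathbb{P}_0,\mathbb{P})\le\epsilon$ then puts the moments of $\mathbb{P}$ in that ball, so $\mathbb{P}\in\mathcal{D}_{\textup H}(\lambda^*)$.

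\emph{Concentration step.} Here I would use $\mathcal{G}((\boldsymbol{\mu}_0,\boldsymbol{\Sigma}_0),(\boldsymbol{\mu}(\mathbb{P}),\boldsymbol{\Sigma}(\mathbb{P})))\le W_2(\mathbb{P}_0,\mathbb{P})$ --- the Gelbrich distance of two mean-covariance pairs is a lower bound for the $2$-Wasserstein distance of any two distributions carrying those moments --- so $\mathcal{P}(\mathcal{E}_\epsilon^{\mathrm c})\le\mathcal{P}(W_2(\mathbb{P}_0,\mathbb{P})>\epsilon)$, and then apply a Fournier--Guillin-type measure concentration inequality for $W_2$ (the same device \citealt{esfahani2018data} use for $W_1$). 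Assumption~\ref{ass: light tail} supplies the exponential-moment condition with exponent $a>2$ that this inequality needs, yielding constants $c_1,c_2>0$ depending on $m$ and on $a,b,E$ with $\mathcal{P}(W_2(\mathbb{P}_0,\mathbb{P})>\epsilon)\le c_1\exp(-c_2N\epsilon^{\max\{m/2,2\}})$ for $\epsilon\le1$ and $\le c_1\exp(-c_2N\epsilon^{a/2})$ for $\epsilon>1$; the hypothesis $m\neq4$ excludes precisely the critical case $m=2p$ with $p=2$, at which the bound acquires a spurious polylogarithmic factor. Combining the two steps gives \eqref{probability_in}.

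\emph{Monotonicity consequences, and the hard part.} For $\lambda\ge\lambda^*$, Proposition~\ref{prop: ambiguity_set_size} (whose hypothesis $\mathbb{P}_0\in\Pi_{\boldsymbol{\xi}}\mathcal{D}$ is in force) gives $\mathcal{D}_{\textup H}(\lambda^*)\subseteq\mathcal{D}_{\textup H}(\lambda)$, so $\{\mathbb{P}\in\mathcal{D}_{\textup H}(\lambda^*)\}\subseteq\{\mathbb{P}\in\mathcal{D}_{\textup H}(\lambda)\}$ and the probability can only grow; moreover $\phi(\lambda)\ge\epsilon$ for every such $\lambda$, so re-running the geometric step with $\mathcal{D}_{\textup H}(\lambda)$ in place of $\mathcal{D}_{\textup H}(\lambda^*)$ yields $\mathcal{P}(\mathbb{P}\in\mathcal{D}_{\textup H}(\lambda))\ge1-\beta$, covering in particular $\lambda\in[1-\beta,1]$ whenever $1-\beta\ge\lambda^*$ (the assertion being vacuous otherwise). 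I expect the geometric step to be the real obstacle: tying the boundary-distance definition of $\lambda^*$ to an honest Gelbrich ball, and passing from ``the mean and covariance of $\mathbb{P}$ are $\epsilon$-covered'' to ``$\mathbb{P}\in\mathcal{D}_{\textup H}(\lambda^*)$'', is delicate because the moment image of the mixture family $\mathcal{D}_{\textup H}(\lambda)$ need not be convex --- covariance is quadratic, not linear, in the mixing weight --- so the radial argument must proceed along carefully chosen curves and with respect to the metric $\mathcal{G}$ on $\mathbb{R}^m\times\mathbb{S}_+^m$ rather than by a one-line convexity appeal, and the degenerate configurations ($\mathbb{P}=\mathbb{P}_0$, $\lambda^*\in\{0,1\}$, $(\boldsymbol{\mu}_0,\boldsymbol{\Sigma}_0)$ on the boundary) must be dispatched separately.
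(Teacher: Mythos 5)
Your overall architecture coincides with the paper's — show the Gelbrich $\epsilon$-ball around $(\boldsymbol{\mu}_0,\boldsymbol{\Sigma}_0)$ sits inside $\mathcal{D}_{\textup H}(\lambda^*)$, dominate the Gelbrich distance by the type-2 Wasserstein distance (Theorem 1 of \cite{nguyen2021mean}), and invoke the Fournier--Guillin concentration bound — and your concentration step and the monotonicity claim for $\lambda \geq \lambda^*$ (via Proposition \ref{prop: ambiguity_set_size}) are sound. But the step you yourself flag as ``the real obstacle'' is left open, and it is the heart of the proof. Your segment/curve-crossing argument in the moment space $\mathbb{R}^m\times\mathbb{S}_+^m$ does not go through as sketched, for exactly the reason you name (the moment image of $\mathcal{D}_{\textup H}(\lambda)$ need not be convex, and you never exhibit the joining curves nor control $\mathcal{G}$ along them); moreover, framing the inclusion in terms of moment images is too weak anyway, since containing the moments of $\mathbb{P}$ does not by itself put the distribution $\mathbb{P}$ in $\mathcal{D}_{\textup H}(\lambda^*)$. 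The paper closes the gap in distribution space: supposing $\mathbb{P}'$ lies in the Gelbrich ball $\mathcal{B}_{\epsilon}^{\textup G}(\boldsymbol{\mu}_0,\boldsymbol{\Sigma}_0)$ (a set of distributions in $\Pi_{\boldsymbol{\xi}}\mathcal{D}$) but not in $\mathcal{D}_{\textup H}(\lambda^*)$, write $\mathbb{P}' = (1-\underline{\lambda})\mathbb{P}_0 + \underline{\lambda}\overline{\mathbb{P}}$ with $\underline{\lambda} > \lambda^*$ and $\overline{\mathbb{P}} \in \partial\Pi_{\boldsymbol{\xi}}\mathcal{D}$, and form the explicit mixture $\mathbb{P}_{\textup H} = (1-\lambda^*/\underline{\lambda})\mathbb{P}_0 + (\lambda^*/\underline{\lambda})\mathbb{P}' = (1-\lambda^*)\mathbb{P}_0 + \lambda^*\overline{\mathbb{P}} \in \partial\mathcal{D}_{\textup H}(\lambda^*)$; the definition \eqref{best_lambda} then forces its Gelbrich distance from $(\boldsymbol{\mu}_0,\boldsymbol{\Sigma}_0)$ to be at least $\epsilon$, while convexity of the Gelbrich ball of distributions (Corollary 3 of \cite{nguyen2021mean}) — the nontrivial fact that substitutes for the convexity you correctly doubt in moment space — places $\mathbb{P}_{\textup H}$ in $\mathcal{B}_{\epsilon}^{\textup G}(\boldsymbol{\mu}_0,\boldsymbol{\Sigma}_0)$, yielding the contradiction. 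Without this (or an equivalent) ingredient, your geometric step is a plan rather than a proof.

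A second, smaller gap: the final clause, $\mathcal{P}(\mathbb{P}\in\mathcal{D}_{\textup H}(\lambda)) \geq 1-\beta$ for all $\lambda\in[1-\beta,1]$, is not vacuous when $1-\beta<\lambda^*$; the interval then contains weights $\lambda<\lambda^*$ to which neither your geometric step nor the $\lambda\geq\lambda^*$ monotonicity applies, so your parenthetical dismissal does not cover the statement as written. The paper handles this case by a separate argument: $\mathbb{P}\in\mathcal{D}_{\textup H}(\lambda)$ exactly when $\mathbb{P}=(1-\lambda)\mathbb{P}_0+\lambda\mathbb{P}_{\textup M}$ for some $\mathbb{P}_{\textup M}\in\Pi_{\boldsymbol{\xi}}\mathcal{D}$, and, interpreting $\lambda$ as the mixture weight and using $\mathbb{P}\in\Pi_{\boldsymbol{\xi}}\mathcal{D}$, it bounds $\mathcal{P}(\mathbb{P}\in\mathcal{D}_{\textup H}(\lambda)) \geq \lambda \geq 1-\beta$. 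You would need to supply an argument of this kind (or another) for that range of $\lambda$.
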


By \eqref{probability_in} in Proposition \ref{prop: non_asymptotic}, we can calculate
\begin{align}
\epsilon = \left( \frac{\log \left( c_1 \beta^{-1} \right) }{ c_2 N } \right)^{\frac{1}{\max\{\frac{m}{2},2\}}}, \ \textup{if } N \geq \frac{\log \left( c_1 \beta^{-1} \right)}{c_2}; \ \textup{ and } \ \epsilon = \left( \frac{\log \left( c_1 \beta^{-1} \right) }{ c_2 N } \right)^{\frac{2}{a}}, \ \textup{otherwie.}
    \label{eqn: epsilon_value}
\end{align}
With a given $\epsilon$ calculated by \eqref{eqn: epsilon_value}, we design a bisection search algorithm to determine $\lambda^*$ efficiently (see Algorithm \ref{alg: bisection} in Appendix \ref{app: Bisection Search Algorithm}).
Given the obtained $\lambda^*$, Proposition \ref{prop: non_asymptotic} ensures that the true distribution $\mathbb{P}$ is in the ambiguity set $\mathcal{D}_{\textup H}(\lambda^*)$ with a confidence at $1-\beta$.

\subsection{Asymptotic Optimality}

Proposition \ref{prop: non_asymptotic} provides a performance guarantee for model \eqref{model_harmonizing} when the sample size $N$ is finite.
In this section, we further investigate the performance of the model as $N$ tends to infinity.
It is clear from \eqref{eqn: epsilon_value} that $\epsilon$ tends to $0$ as $N$ grows sufficiently large.
In additional, Proposition \ref{prop: ambiguity_set_size} suggests that as $N$ grows, $\lambda = C/\sqrt{N}$ decreases, causing $\mathcal{D}_{\textup H}(\lambda)$ to shrink.
These trends indicate that the optimal value of model \eqref{model_harmonizing} may converge as $N$ grows sufficiently large.
To that end, we prove that the optimal value of model \eqref{model_harmonizing} converges to $V^*$ with probability (w.p.) 1 as $N$ tends to infinity, showcasing the asymptotic optimality of model \eqref{model_harmonizing}. 
More importantly, the corresponding error, e.g., the gap between the optimal value of model \eqref{model_harmonizing} and $V^*$, shrinks quickly in the $1/\sqrt{N}$-rate, achieving a good performance guarantee. 
Such a result provides one with confidence to use the HO approach for decision-making in practice, as the performance of HO improves with the duration of operations and the accumulation of more data samples.
We first present the asymptotic optimality of model \eqref{model_harmonizing} in the following proposition.

\begin{proposition} \label{prop: harmonizing_converge}
    If $F_N(\mathbf{x})$ converges to $F(\mathbf{x})$ w.p. 1 as $N \rightarrow \infty$, uniformly on $\mathcal{X}$, then $\Gamma(\lambda) \rightarrow V^*$ w.p. 1 as $N \rightarrow \infty$.
\end{proposition}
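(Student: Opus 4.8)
The plan is to reduce Proposition~\ref{prop: harmonizing_converge} to the already-established asymptotic optimality of SAA (Proposition~\ref{prop: converge}) by controlling the DRO term in $F_\lambda(\mathbf{x})$ uniformly on $\mathcal{X}$. Write $G(\mathbf{x}) = \max_{\mathbb{P} \in \mathcal{D}} \mathbb{E}_{\mathbb{P}}[f(\mathbf{x},\boldsymbol{\xi})]$, so that $F_\lambda(\mathbf{x}) = (1-\lambda) F_N(\mathbf{x}) + \lambda G(\mathbf{x})$ and hence $F_\lambda(\mathbf{x}) - F(\mathbf{x}) = (1-\lambda)(F_N(\mathbf{x}) - F(\mathbf{x})) + \lambda(G(\mathbf{x}) - F(\mathbf{x}))$. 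The first step is to observe that $G$ is finite and bounded uniformly on $\mathcal{X}$: since $\mathcal{X}$ is compact and $f(\cdot,\boldsymbol{\xi})$, $f(\mathbf{x},\cdot)$ are continuous (indeed Lipschitz) with $\boldsymbol{\xi}$ ranging over the compact support $\mathcal{S}$, there is a constant $M$ with $|f(\mathbf{x},\boldsymbol{\xi})| \le M$ for all $(\mathbf{x},\boldsymbol{\xi}) \in \mathcal{X}\times\mathcal{S}$; because every $\mathbb{P}\in\mathcal{D}$ has its $\boldsymbol{\xi}$-marginal supported on $\mathcal{S}$, this gives $|G(\mathbf{x})| \le M$ and likewise $|F(\mathbf{x})| \le M$ for all $\mathbf{x}\in\mathcal{X}$. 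Therefore $\sup_{\mathbf{x}\in\mathcal{X}} |G(\mathbf{x}) - F(\mathbf{x})| \le 2M < \infty$.

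Next I would use $\lambda = C/\sqrt{N} \to 0$ to kill the DRO contribution: for every $\mathbf{x}\in\mathcal{X}$,
\begin{align*}
\bigl| F_\lambda(\mathbf{x}) - F(\mathbf{x}) \bigr| \;\le\; (1-\lambda)\,\bigl| F_N(\mathbf{x}) - F(\mathbf{x}) \bigr| \;+\; \lambda\,\bigl| G(\mathbf{x}) - F(\mathbf{x}) \bigr| \;\le\; \sup_{\mathbf{x}\in\mathcal{X}} \bigl| F_N(\mathbf{x}) - F(\mathbf{x}) \bigr| \;+\; \frac{2MC}{\sqrt{N}}.
\end{align*}
Taking the supremum over $\mathbf{x}\in\mathcal{X}$ on the left, the right-hand side is a deterministic sequence plus a term that, by hypothesis, converges to $0$ w.p.~1 as $N\to\infty$. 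Hence $\sup_{\mathbf{x}\in\mathcal{X}} |F_\lambda(\mathbf{x}) - F(\mathbf{x})| \to 0$ w.p.~1, i.e. $F_\lambda$ converges to $F$ uniformly on $\mathcal{X}$ w.p.~1. Applying Proposition~\ref{prop: converge} with $F_\lambda$ in the role of the approximating objective then yields $\Gamma(\lambda) \to V^*$ w.p.~1.

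The only genuine subtlety — and the step I would treat most carefully — is verifying the uniform boundedness of $G$ and, with it, that the whole argument goes through \emph{uniformly} on $\mathcal{X}$ rather than just pointwise; this is where the compactness of $\mathcal{X}$ and $\mathcal{S}$ and the Lipschitz/continuity hypotheses on $f$ are essential, and it is what ensures the $\lambda\,|G(\mathbf{x})-F(\mathbf{x})|$ term is $O(1/\sqrt{N})$ with a constant independent of $\mathbf{x}$. One should also note explicitly that Proposition~\ref{prop: converge} requires the uniform w.p.~1 convergence of the approximating objective to $F$; since the perturbation $\lambda(G-F)$ is deterministic given the data (it does not even depend on the samples beyond $\lambda=C/\sqrt N$) and vanishes, it does not disturb the required mode of convergence. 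A minor point worth a sentence is that $\lambda = C/\sqrt N \le 1$ holds for all $N$ large enough (once $N \ge C^2$), so the weight is admissible in the tail where the limit is taken.
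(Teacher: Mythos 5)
Your proof is correct, and its skeleton matches the paper's: split the error into the SAA part and the $\lambda$-weighted DRO part, use $\lambda = C/\sqrt{N}\to 0$ to kill the latter, and conclude from uniform convergence of the objectives via $|\Gamma(\lambda)-V^*|\le \sup_{\mathbf{x}\in\mathcal{X}}|F_{\lambda}(\mathbf{x})-F(\mathbf{x})|$ (which is also the content of your appeal to Proposition \ref{prop: converge}; strictly, that proposition is stated for $F_N$, so you should either invoke the underlying general fact or write the one-line min--min inequality explicitly, as the paper does). Where you genuinely diverge is in the treatment of uniformity. The paper writes $F_{\lambda}-F_N=\lambda\bigl(\max_{\mathbb{P}\in\mathcal{D}}\mathbb{E}_{\mathbb{P}}[f]-F_N\bigr)$, argues only pointwise finiteness of this bracket at each fixed $\mathbf{x}$ (yielding a threshold $N_1(\mathbf{x},\epsilon_1)$ that itself involves $F_N$), and then at the end evaluates the pointwise bound at $\mathbf{x}^*=\argmax_{\mathbf{x}\in\mathcal{X}}|F_{\lambda}(\mathbf{x})-F(\mathbf{x})|$, a point that depends on $N$. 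You instead bound $\sup_{\mathbf{x}\in\mathcal{X}}|G(\mathbf{x})-F(\mathbf{x})|\le 2M$ once and for all via compactness of $\mathcal{X}\times\mathcal{S}$, so the perturbation is $O(1/\sqrt{N})$ with a constant independent of $\mathbf{x}$ and of the data, and the passage to the supremum is immediate. This buys a cleaner and arguably more airtight argument (the paper's $N$-dependent choice of $\mathbf{x}^*$ and of $N_1(\mathbf{x},\epsilon_1)$ is exactly the kind of circularity your uniform bound avoids), at the price of using slightly more than the paper's stated standing assumptions: boundedness of $f$ on $\mathcal{X}\times\mathcal{S}$ requires continuity of $f(\cdot,\boldsymbol{\xi})$ in $\mathbf{x}$ (or the Lipschitz-in-$\mathbf{x}$ condition of Proposition \ref{prop: harmonizing_bias}) and compactness of $\mathcal{S}$, which the paper only invokes elsewhere (e.g., around Assumption \ref{ass: light tail} and in the piecewise-affine setting of Section \ref{sec: Equivalent Reformulation of HO}). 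You flag this dependence yourself, so it is a caveat to state, not a gap.
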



Next, we investigate the gap between the optimal value of model \eqref{model_harmonizing} and $V^*$.
Let $\mathcal{X}^*$ denote the set of optimal solutions of model \eqref{model0}. 
We then have the following proposition.

\begin{proposition} \label{prop: harmonizing_bias}
Assume there exists a measurable function $W: \mathcal{S} \rightarrow \mathbb{R}_+$ such that $\mathbb{E}[W(\boldsymbol{\xi})^2]$ is finite and $| f(\mathbf{x}, \boldsymbol{\xi}) - f(\mathbf{x}^{\prime}, \boldsymbol{\xi}) | \leq W(\boldsymbol{\xi}) \| \mathbf{x}-\mathbf{x}^{\prime} \|$ for any $\mathbf{x}, \mathbf{x}^{\prime} \in \mathcal{X}$ and a.e. $\boldsymbol{\xi} \in \mathcal{S}$. Then the following holds:
\begin{align}
    &\Gamma \left( \lambda \right) = \inf_{\mathbf{x} \in \mathcal{X}^*} F_{\lambda}(\mathbf{x}) + O \left( \frac{1}{\sqrt{N}} \right), \ \ \ \sqrt{N} \left( \Gamma \left( \lambda \right) - V^* \right) \xrightarrow{\mathcal{D}}  \inf_{\mathbf{x} \in \mathcal{X}^*} Y \left( \mathbf{x} \right), \label{eqn: asymptotic_order_2}
\end{align}
where $Y(\mathbf{x}) \sim \mathcal{N}(0, \sigma^2(\mathbf{x}) )$ for any $\mathbf{x} \in \mathcal{X}$.
Furthermore, if $\mathcal{X}^* = \{\mathbf{x}^*\}$ is a singleton, then
\vspace{-2mm}
\begin{align}
    \sqrt{N} \left( \Gamma \left( \lambda \right) - V^* \right) \xrightarrow{\mathcal{D}}  \mathcal{N} \left(0, \sigma^2\left( \mathbf{x}^* \right) \right). \label{eqn: asymptotic_order_3}
\end{align}
\end{proposition}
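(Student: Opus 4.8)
\textbf{Proof proposal for Proposition~\ref{prop: harmonizing_bias}.}
The plan is to view model~\eqref{model_harmonizing} as a vanishing perturbation of the SAA model~\eqref{model: SAA} and then adapt the Delta-method argument underlying the classical central-limit theory for SAA optimal values (e.g., Theorem~5.7 of \citet{shapiro2021lectures}). Write $g(\mathbf{x}) := \max_{\mathbb{P}\in\mathcal{D}}\mathbb{E}_{\mathbb{P}}[f(\mathbf{x},\boldsymbol{\xi})]$, so that $F_{\lambda}(\mathbf{x})=(1-\lambda)F_N(\mathbf{x})+\lambda g(\mathbf{x})$. First I would record the structural facts about $g$: since each $f(\cdot,\boldsymbol{\xi})$ is convex and Lipschitz and $\boldsymbol{\xi}$ ranges over the compact support, $\mathbf{x}\mapsto g(\mathbf{x})$ is real-valued, convex on $\mathcal{X}$, Lipschitz (with modulus at most $\max_{\mathbb{P}\in\mathcal{D}}\mathbb{E}_{\mathbb{P}}[W(\boldsymbol{\xi})]<\infty$), and -- crucially -- deterministic, not depending on the sample. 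Since $\lambda=C/\sqrt{N}\to 0$, the perturbation $F_{\lambda}-F_N=\lambda(g-F_N)$ is $O(1/\sqrt{N})$ uniformly on $\mathcal{X}$ w.p.~1; hence $F_{\lambda}\to F$ uniformly w.p.~1, Proposition~\ref{prop: harmonizing_converge} gives $\Gamma(\lambda)\to V^*$, and a standard argmin-consistency argument (convexity and compactness of $\mathcal{X}$) shows every minimizer $\hat{\mathbf{x}}_N$ of $F_{\lambda}$ satisfies $d(\hat{\mathbf{x}}_N,\mathcal{X}^*)\to 0$ w.p.~1.

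Next I would establish the localization identity in \eqref{eqn: asymptotic_order_2}. The bound $\Gamma(\lambda)\le\inf_{\mathbf{x}\in\mathcal{X}^*}F_{\lambda}(\mathbf{x})$ is trivial. For the reverse direction, set $\nu_N(\mathbf{x}):=\sqrt{N}(F_N(\mathbf{x})-F(\mathbf{x}))$ and let $\bar{\mathbf{x}}_N$ be a projection of $\hat{\mathbf{x}}_N$ onto $\mathcal{X}^*$. Split $F_{\lambda}(\hat{\mathbf{x}}_N)-F_{\lambda}(\bar{\mathbf{x}}_N)$ into its $(1-\lambda)F_N$ part and its $\lambda g$ part: the latter is at most $\lambda L_g\|\hat{\mathbf{x}}_N-\bar{\mathbf{x}}_N\|=o_p(1/\sqrt{N})$ by the Lipschitz property of $g$, while the former is treated exactly as in the SAA case. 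Here the Lipschitz-envelope hypothesis ($|f(\mathbf{x},\boldsymbol{\xi})-f(\mathbf{x}',\boldsymbol{\xi})|\le W(\boldsymbol{\xi})\|\mathbf{x}-\mathbf{x}'\|$ with $\mathbb{E}[W(\boldsymbol{\xi})^2]<\infty$) makes $\{f(\mathbf{x},\cdot):\mathbf{x}\in\mathcal{X}\}$ a Donsker class, so $\nu_N$ is asymptotically equicontinuous; combined with $0\le F(\hat{\mathbf{x}}_N)-V^*\to 0$ and the fact that $\hat{\mathbf{x}}_N$ also nearly minimizes $F_N$, this yields $F_N(\hat{\mathbf{x}}_N)-F_N(\bar{\mathbf{x}}_N)\ge-o_p(1/\sqrt{N})$. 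Altogether $\Gamma(\lambda)\ge\inf_{\mathbf{x}\in\mathcal{X}^*}F_{\lambda}(\mathbf{x})-o_p(1/\sqrt{N})$, which gives the claimed $O(1/\sqrt{N})$ (indeed $o_p(1/\sqrt{N})$) remainder.

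Finally I would identify the limiting law. On $\mathcal{X}^*$ one has $F(\mathbf{x})=V^*$, so $\sqrt{N}(F_{\lambda}(\mathbf{x})-V^*)=(1-\lambda)\nu_N(\mathbf{x})+\sqrt{N}\lambda(g(\mathbf{x})-V^*)$ for every $\mathbf{x}\in\mathcal{X}^*$; here $(1-\lambda)\to1$ and, by the Donsker property, $\nu_N$ converges weakly, as a process on the compact set $\mathcal{X}^*$, to the tight Gaussian process $Y$ with marginals $Y(\mathbf{x})\sim\mathcal{N}(0,\sigma^2(\mathbf{x}))$. Once the $\sqrt{N}\lambda(g(\mathbf{x})-V^*)$ contribution is shown not to affect the limit (see below), the continuous mapping theorem applied to the $1$-Lipschitz infimum functional $h\mapsto\inf_{\mathbf{x}\in\mathcal{X}^*}h(\mathbf{x})$ on $C(\mathcal{X}^*)$, together with the localization identity and Slutsky's theorem, yields $\sqrt{N}(\Gamma(\lambda)-V^*)\xrightarrow{\mathcal{D}}\inf_{\mathbf{x}\in\mathcal{X}^*}Y(\mathbf{x})$. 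When $\mathcal{X}^*=\{\mathbf{x}^*\}$ the infimum reduces to the single marginal $Y(\mathbf{x}^*)\sim\mathcal{N}(0,\sigma^2(\mathbf{x}^*))$ -- the variance being finite by the envelope hypothesis and compactness of the support -- which is \eqref{eqn: asymptotic_order_3}.

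\textbf{Expected main obstacle.} The delicate point is the localization step: showing that minimization of the \emph{perturbed} objective $F_{\lambda}$ concentrates near $\mathcal{X}^*$ fast enough that both the empirical-process remainder and the $\lambda g$ term contribute only $o_p(1/\sqrt{N})$ to the $\sqrt{N}$-scaled optimal value. This needs the full M-estimation machinery -- Donsker classes and stochastic equicontinuity, or the epigraphical/Delta-theorem treatment of \citet{shapiro2021lectures} -- with verification of the Donsker property of $\{f(\mathbf{x},\cdot):\mathbf{x}\in\mathcal{X}\}$ from the $L^2$-Lipschitz envelope. The one genuinely new ingredient relative to the classical SAA argument is handling the DRO term $\sqrt{N}\lambda(g(\mathbf{x})-V^*)$: with $\lambda=C/\sqrt{N}$ this quantity is a priori of order one, so the argument must show that it is absorbed into the $o_p(1/\sqrt{N})$ remainder -- for instance by exploiting that the minimizing sequence localizes to the part of $\mathcal{X}^*$ on which $g$ is smallest, or under a slightly stronger rate condition on $\lambda$ -- so that it neither displaces the minimizer nor moves the optimal value at an order competitive with the $1/\sqrt{N}$ fluctuation that governs the limiting Gaussian law.
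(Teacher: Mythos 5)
Your overall reduction---treat \eqref{model_harmonizing} as a uniformly $O(1/\sqrt{N})$ perturbation of the SAA problem \eqref{model: SAA} and transfer the SAA central limit theory---is the same route the paper takes, but you rebuild the M-estimation machinery (Donsker property of $\{f(\mathbf{x},\cdot):\mathbf{x}\in\mathcal{X}\}$, stochastic equicontinuity, argmin localization) from scratch, whereas the paper only establishes the two elementary bounds $|\Gamma(\lambda)-V_N|\le \overline{C}_1/\sqrt{N}$ and $\sup_{\mathbf{x}\in\mathcal{X}}|F_{\lambda}(\mathbf{x})-F_N(\mathbf{x})|\le \overline{C}_2/\sqrt{N}$ (immediate from $\lambda=C/\sqrt{N}$ and finiteness of the expectations) and then quotes Theorem 5.7 of \cite{shapiro2021lectures} for $V_N$, substituting $\Gamma(\lambda)$ for $V_N$ and $F_{\lambda}$ for $F_N$. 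For the first identity in \eqref{eqn: asymptotic_order_2}, which only requires an $O(1/\sqrt{N})$ remainder, that substitution already suffices, so your heavier localization argument there is correct but more than is needed.

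The genuine gap is in the distributional claims. Writing $g(\mathbf{x})=\max_{\mathbb{P}\in\mathcal{D}}\mathbb{E}_{\mathbb{P}}[f(\mathbf{x},\boldsymbol{\xi})]$, you correctly isolate the contribution $\sqrt{N}\lambda\,(g(\mathbf{x})-V^*)=C\,(g(\mathbf{x})-V^*)$, which is of order one on $\mathcal{X}^*$, but you never show that it ``does not affect the limit''; your closing paragraph concedes this, and the remedies you float (localizing to the part of $\mathcal{X}^*$ where $g$ is smallest, or strengthening the rate to $\lambda=o(1/\sqrt{N})$) are left unexecuted---indeed, carried out literally, your own decomposition yields the limit $\inf_{\mathbf{x}\in\mathcal{X}^*}\{Y(\mathbf{x})+C(g(\mathbf{x})-V^*)\}$ rather than $\inf_{\mathbf{x}\in\mathcal{X}^*}Y(\mathbf{x})$ unless $g\equiv V^*$ on $\mathcal{X}^*$, while changing the rate changes the hypothesis of the proposition. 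So as written you have not proved the second part of \eqref{eqn: asymptotic_order_2} or \eqref{eqn: asymptotic_order_3}. Note the paper's own proof handles this step quite differently and much more quickly: from $\Gamma(\lambda)=V_N+O(1/\sqrt{N})$ it passes directly to the SAA limits $\sqrt{N}(V_N-V^*)\xrightarrow{\mathcal{D}}\inf_{\mathbf{x}\in\mathcal{X}^*}Y(\mathbf{x})$ and attaches them to $\sqrt{N}(\Gamma(\lambda)-V^*)$, i.e., it treats $\sqrt{N}(\Gamma(\lambda)-V_N)$ as negligible rather than carrying out the order-one bookkeeping you attempt; that is exactly the quantity your analysis exposes. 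In short, you identified the one step that distinguishes this result from the classical SAA theorem, but your proposal neither closes it nor falls back on the paper's substitution argument, so the proof of the weak-convergence statements is incomplete.
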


\vspace{-2mm}

By the second part of \eqref{eqn: asymptotic_order_2} in Proposition \ref{prop: harmonizing_bias} and Remark 57 in \cite{shapiro2021lectures}, we have $\sqrt{N} \mathbb{E}[\Gamma (\lambda) - V^*]$ tends to $\mathbb{E}[\inf_{\mathbf{x} \in \mathcal{X}^*} Y ( \mathbf{x} )]$ as $N \rightarrow \infty$; that is,
\vspace{-2mm}
\begin{align}
    \mathbb{E}[\Gamma(\lambda)] - V^* = \frac{1}{\sqrt{N}} \mathbb{E}\left[ \inf_{\mathbf{x} \in \mathcal{X}^*} Y \left( \mathbf{x} \right) \right] + o\left( \frac{1}{\sqrt{N}} \right), \label{eqn: optimal_value_gap}
\end{align}
where $ o (\cdot) $ refers to convergence to 0. Equation \eqref{eqn: optimal_value_gap} reflects the gap between the optimal value of model \eqref{model_harmonizing} and $V^*$, which diminishes as $N$ grows sufficiently large. 
Thus, given that we set the weight $\lambda$ in a $1/\sqrt{N}$-rate, i.e., $C / \sqrt{N}$, we can obtain a good performance guarantee for model \eqref{model_harmonizing}. 
The performance of model \eqref{model_harmonizing} is particularly significant when $\mathcal{X}^* = \{\mathbf{x}^*\}$ is a singleton, which leads to $\mathbb{E}[\inf_{\mathbf{x} \in \mathcal{X}^*} Y (\mathbf{x})] = \mathbb{E}[Y (\mathbf{x}^*)] = 0$ and $\mathbb{E}[\Gamma(\lambda)] - V^* = o( 1/\sqrt{N} )$. 
When $\mathcal{X}^*$ has more than one elements, $\inf_{\mathbf{x} \in \mathcal{X}^*} Y (\mathbf{x})$ may have a negative mean, i.e., $\mathbb{E}[\inf_{\mathbf{x} \in \mathcal{X}^*} Y (\mathbf{x})] < 0$.
Then, the gap, i.e., $\mathbb{E}[\Gamma(\lambda)] - V^*$, may be negative and in the order of $1/\sqrt{N}$, i.e., $O(1/\sqrt{N})$.

Model \eqref{model_harmonizing} achieves the above significant performance by integrating data with information and diminishing the influence of information while enlarging the significance of data as the sample size becomes large.
More importantly, we provide a \textit{simple yet effective} framework for integrating data and information while attaining a significant theoretical performance.
Such a framework is comparable to existing DRO frameworks with theoretical guarantees, such as the Wasserstein DRO with a 1 or 2-Wasserstein distance \citep{gao2023finite}.
Specifically, \cite{gao2023finite} shows that 1 or 2-Wasserstein DRO can achieve its performance guarantees by using the Wasserstein ball radius in a $1/\sqrt{N}$-rate (i.e., a rate similar to the weight $\lambda$ in this paper) to effectively avoid the curse of dimensionality.
To show the performance guarantees, \cite{gao2023finite} employs several advanced techniques, including Kantorovich’s duality, Markov’s inequality, and Young's inequality in several steps:
(i) the variation-based concentration holds if the true distribution satisfies the transportation-information inequality, by which performance guarantees for Wasserstein DRO can be proved for one loss function when the radius is in $1/\sqrt{N}$-rate;
(ii) leverages Local Rademacher Complexity Arguments to extend these results to encompass a wider range of loss functions. 
Clearly, the process of proving the performance guarantees for our proposed framework with the weight $\lambda$ set in a $1/\sqrt{N}$-rate is more straightforward to comprehend than that for Wasserstein DRO.

\vspace{-2mm}

\subsection{Parameter Estimation} \label{sec: estimate lambda}

In this section, we detail the estimation of $\lambda$, which plays a crucial role in HO.
Specifically, setting $\lambda = C / \sqrt{N}$ guarantees the asymptotic optimality of our HO model and its optimal value error of order $O(1/\sqrt{N})$. 
Choices of the constant $C$ do not affect the theoretical guarantees but may result in decisions with various performances in practice.
We propose three different methods of choosing $C$: (i) $K$-fold cross-validation, (ii) Tightening the confidence interval in Proposition \ref{prop: interval}, and (iii) Straightforward estimation.
Method (i) divides data samples into $K$ sets, using each set once to obtain optimal solutions with various $C$ candidates and the remaining sets to validate their performance, to identify the best candidate $C$.
Method (ii) identifies the best candidate $C$ that minimizes the confidence interval in Proposition \ref{prop: interval}, i.e., $z_{\alpha/2} \hat{\sigma}(\mathbf{x}) / \sqrt{N}$.
Method (iii) sets $C = \sqrt{M_0}$, where $M_0$ denotes the smallest number of samples we may have.
The details of each method are presented in Appendix \ref{sec: details of parameter estimation}.

As opposed to some existing DRO models (e.g., Wasserstein DRO), which require estimating the size parameter of the ambiguity set whenever $N$ samples change, our proposed HO model only requires estimating $C$ once, regardless of sample changes.
In particular, once we complete the estimation of $C$, we have $\lambda = C/\sqrt{N}$ for any $N$, by which we can apply the HO model directly for any sample size.
This highlights the significance of our proposed approach when solving the same problem multiple times with a varying number of given samples, which is common in real-world applications.
For example, consider a case where an operations manager is responsible for inventory management across thousands of convenience stores (e.g., 7-Eleven), which face uncertain demands.
The manager is tasked with solving the same stochastic newsvendor problem multiple times, one for each store.
Given the stores' diverse locations, the amount of historical demand samples varies from one store to another.
In this case, our proposed HO model can prove its specific advantage: we only need to estimate the size parameter $C$ once.
After this initial estimation, the model can be applied to efficiently address the stochastic inventory challenges for all stores.

\vspace{-2mm}

\subsection{Equivalent Reformulation} \label{sec: Equivalent Reformulation of HO}

First, to ensure the tractability of model \eqref{model_harmonizing}, we require the following common and practical conditions on the ambiguity set $\mathcal{D}$ and function $f(\mathbf{x}, \boldsymbol{\xi})$ \citep{wiesemann2014distributionally}.

\begin{enumerate}[label=(\roman*)]
\item The confidence set $\mathcal{C}_{I}$ is bounded and owns probability 1, i.e., $\underline{p}_I = \overline{p}_I = 1$.
This condition ensures that the confidence set with the largest index, i.e., $\mathcal{C}_I$, contains the support of $(\boldsymbol{\xi}, \mathbf{u})$. 

\item There exists a distribution $\mathbb{P} \in \mathcal{D}$ such that $\mathbb{P}((\boldsymbol{\xi}, \mathbf{u}) \in \mathcal{C}_i) \in (\underline{p}_i, \overline{p}_i)$, whenever $\underline{p}_i < \overline{p}_i$ for some $i \in [I]$.
This condition guarantees that there exists a distribution $\mathbb{P} \in \mathcal{D}$ satisfying the probability bounds as strict inequalities.

\item The function $f(\mathbf{x}, \boldsymbol{\xi})$ is piecewise linear convex in $\boldsymbol{\xi}$, i.e., $f(\mathbf{x}, \boldsymbol{\xi}) = \max_{k \in [K]}  f_k(\mathbf{x}, \boldsymbol{\xi}) = \max_{k \in [K]} 
\{ \alpha_k(\mathbf{x})^{\top} \boldsymbol{\xi} + \beta_k(\mathbf{x}) \} $ with both $\alpha_k: \mathbb{R}^n \rightarrow \mathbb{R}^m$ and $\beta_k: \mathbb{R}^n \rightarrow \mathbb{R}$ affine in $\mathbf{x}$ for any $k \in [K]$. 
This condition enables us to use robust optimization techniques to reformulate the semi-infinite constraints that arise from a dual reformulation of $\max_{\mathbb{P} \in \mathcal{D}} \ \mathbb{E}_{\mathbb{P}} [ f ( \boldsymbol{x},\boldsymbol{\xi} ) ]$.

\item For any $i, j \in [I], i \neq j$, we have either $\mathcal{C}_i \subsetneq \mathcal{C}_j$, $\mathcal{C}_j \subsetneq \mathcal{C}_i$, or $\mathcal{C}_i \cap \mathcal{C}_j = \emptyset$.
This condition implies a strict partial order on $\mathcal{C}_1, \ldots, \mathcal{C}_I$ in terms of the $\subsetneq$-relation.
This enables us to split the support of $(\boldsymbol{\xi}, \mathbf{u})$ into several disjoint and nonempty sets in the reformulation of $\max_{\mathbb{P} \in \mathcal{D}} \ \mathbb{E}_{\mathbb{P}} [ f ( \boldsymbol{x},\boldsymbol{\xi} ) ]$.
\end{enumerate}

\begin{theorem} \label{theorem: reformulation}
Assume conditions (i)--(iv) hold. Model \eqref{model_harmonizing} can be equivalently reformulated as
\vspace{-2mm}
\begin{align} \tag{$\textup{H}_1$} \label{model_harmonizing_reform}
    \min_{\mathbf{x} \in \mathcal{X}; \ \mathbf{w}, \boldsymbol{\pi}, \boldsymbol{\tau}, \boldsymbol{\kappa}, \boldsymbol{\theta}} \ \ \ & \left( 1-\lambda \right) \frac{1}{N} \sum_{j=1}^N w_j + \lambda \Big( \mathbf{b}^{\top} \boldsymbol{\pi} + \sum_{i \in [I]} \left( \overline{p}_i \kappa_i - \underline{p}_i \tau_i \right) \Big) \\
    {\normalfont \text{s.t.}} \ \ \ 
    & w_j \geq \alpha_k(\mathbf{x})^{\top} \tilde{\boldsymbol{\xi}}_j + \beta_k(\mathbf{x}), \ \forall \, j \in [N], \, k \in [K], \nonumber \\
    & \mathbf{c}_i^{\top} \boldsymbol{\theta}_{i,k} + \beta_k \left( \mathbf{x} \right) \leq \sum_{j \in \mathcal{A}_i} \left( \kappa_j - \tau_j \right), \ \forall \, i \in [I], k \in [K], \nonumber \\
    & \mathbf{C}_i^{\top} \boldsymbol{\theta}_{i,k} + \mathbf{A}^{\top} \boldsymbol{\pi} = \alpha_k \left( \mathbf{x} \right), \ \forall \, i \in [I], k \in [K], \nonumber \\
    & \mathbf{D}_i^{\top} \boldsymbol{\theta}_{i,k} + \mathbf{B}^{\top} \boldsymbol{\pi} = 0, \ \forall \, i \in [I], k \in [K], \nonumber \\
    & \boldsymbol{\pi} \in \mathbb{R}^m, \ \boldsymbol{\tau}, \ \boldsymbol{\kappa} \in \mathbb{R}_+^I; \ \ \boldsymbol{\theta}_{i,k} \in \mathcal{K}_i^*, \ \forall \, i \in [I], k \in [K]. \nonumber
\end{align}
\end{theorem}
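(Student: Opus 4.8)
The core of the argument is a standard two-step dualization. First, split the objective of \eqref{model_harmonizing} using Proposition~\ref{prop: reform_to_dro} (or directly): the $(1-\lambda)$-weighted SAA term is already $\tfrac{1}{N}\sum_j f(\mathbf{x},\tilde{\boldsymbol{\xi}}_j)$, which I would linearize using condition~(iii) by introducing $w_j$ with $w_j \ge \alpha_k(\mathbf{x})^\top \tilde{\boldsymbol{\xi}}_j + \beta_k(\mathbf{x})$ for all $k \in [K]$, since minimizing $\tfrac1N\sum_j w_j$ over these epigraph constraints recovers $\tfrac1N\sum_j \max_k f_k(\mathbf{x},\tilde{\boldsymbol{\xi}}_j)$. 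The substantive part is the inner worst-case term $\max_{\mathbb{P}\in\mathcal D}\mathbb{E}_{\mathbb P}[f(\mathbf{x},\boldsymbol{\xi})]$. I would appeal to the reformulation machinery of \citet{wiesemann2014distributionally}: under conditions~(i)--(iv), this moment problem is a semi-infinite (conic) linear program in the measure $\mathbb P$, whose dual — obtained by strong duality, which holds because of condition~(ii) (a Slater-type point) — has dual variables $\boldsymbol{\pi}\in\mathbb R^m$ for the affine moment equality $\mathbb E_{\mathbb P}[\mathbf A\boldsymbol{\xi}+\mathbf B\mathbf u]=\mathbf b$, and $\kappa_i \ge 0$, $\tau_i \ge 0$ for the upper/lower probability bounds $\mathbb P[(\boldsymbol\xi,\mathbf u)\in\mathcal C_i]\le\overline p_i$ and $\ge\underline p_i$. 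This yields the dual objective $\mathbf b^\top\boldsymbol\pi + \sum_i(\overline p_i\kappa_i - \underline p_i\tau_i)$ together with a semi-infinite feasibility constraint requiring the dual function to dominate $f(\mathbf{x},\cdot)$ pointwise on the support.

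Next I would discretize the semi-infinite constraint using the nested structure from condition~(iv). The strict partial order on $\mathcal C_1,\dots,\mathcal C_I$ under $\subsetneq$ induces a partition of the support into finitely many "cells," and for each cell the relevant sum of dual multipliers is $\sum_{j\in\mathcal A_i}(\kappa_j-\tau_j)$, where $\mathcal A_i$ indexes the confidence sets containing that cell (this is where the ancestor set notation $\mathcal A_i$ comes from; it is defined in the Wiesemann et al. framework). On each cell the constraint reads: for all $(\boldsymbol\xi,\mathbf u)$ with $\mathbf c_i - (\mathbf C_i\boldsymbol\xi + \mathbf D_i\mathbf u)\in\mathcal K_i$, one has $\alpha_k(\mathbf{x})^\top\boldsymbol\xi + \beta_k(\mathbf{x}) - (\text{moment contribution}) \le \sum_{j\in\mathcal A_i}(\kappa_j-\tau_j)$ for each piece $k\in[K]$. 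This is a robust linear constraint over a conic-representable uncertainty set, so by conic LP duality (again a Slater condition is available from boundedness in condition~(i)) I can replace the "for all $(\boldsymbol\xi,\mathbf u)$" by introducing dual variables $\boldsymbol\theta_{i,k}\in\mathcal K_i^*$ and writing the three equality constraints $\mathbf C_i^\top\boldsymbol\theta_{i,k} + \mathbf A^\top\boldsymbol\pi = \alpha_k(\mathbf{x})$, $\mathbf D_i^\top\boldsymbol\theta_{i,k} + \mathbf B^\top\boldsymbol\pi = \mathbf 0$, and the scalar inequality $\mathbf c_i^\top\boldsymbol\theta_{i,k} + \beta_k(\mathbf{x}) \le \sum_{j\in\mathcal A_i}(\kappa_j-\tau_j)$. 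Collecting the $(1-\lambda)$-weighted SAA block and the $\lambda$-weighted dual block gives exactly \eqref{model_harmonizing_reform}.

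\textbf{Main obstacle.} The genuinely delicate step is justifying the exchange of the inner $\max_{\mathbb P\in\mathcal D}$ with dualization and the finite reduction of the semi-infinite constraints — i.e., verifying that strong duality holds with no duality gap at each stage. This rests on the Slater-type conditions~(i) and~(ii) and on the support-partitioning argument enabled by~(iv); the bulk of this is precisely the content of Theorem~1 (and its proof) in \citet{wiesemann2014distributionally}, so I would cite that and only carry out the routine bookkeeping of (a) threading the affine-in-$\mathbf{x}$ dependence of $\alpha_k,\beta_k$ through the dual, and (b) combining it linearly with the SAA epigraph reformulation via the weights $1-\lambda$ and $\lambda$. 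Since the two blocks share no variables except $\mathbf{x}$, the linear combination of the two optimization problems is immediate, and the convexity/compactness of $\mathcal X$ plus the piecewise-linear-convex structure of $f$ ensure the overall problem is a well-posed convex program of the stated conic form (an LP or SDP when $\mathcal K_i$ is polyhedral or the semidefinite cone, respectively).
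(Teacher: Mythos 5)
Your proposal is correct and follows essentially the same route as the paper, which simply deduces the result from Theorem~1 of \cite{wiesemann2014distributionally}: the paper's proof is exactly the combination you describe of the epigraph reformulation of the $(1-\lambda)$-weighted SAA term with the Wiesemann--Kuhn--Sim dual reformulation of the $\lambda$-weighted worst-case expectation under conditions (i)--(iv), linked only through $\mathbf{x}$. Your added detail on the strong-duality and cell-partition steps is just an unpacking of that cited theorem, not a different argument.
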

\vspace{-2mm}
\begin{proof}{Proof.}
The result is deduced from Theorem 1 in \cite{wiesemann2014distributionally}.
\end{proof}

Model \eqref{model_harmonizing_reform} is a computationally tractable program for several ambiguity sets of practical interests, and we provide the details in Appendix \ref{sec: tractable reform}.

\section{Scenario Reduction} \label{sec: Scenario Reduction}

Propositions \ref{prop: converge} and \ref{prop: interval} indicate that the SAA model \eqref{model: SAA} performs notably well with a substantial number of samples.
However, such a large volume of samples makes the model hard to solve, posing significant challenges for making decisions under uncertainty in practice.
More generally, stochastic models with discrete distributions over a large volume of scenarios are hard to solve.
To address this computational challenge, scenario reduction emerges as an effective approach.
That is, for the model considering the $N$ given samples $\tilde{\boldsymbol{\xi}}_1, \ldots, \tilde{\boldsymbol{\xi}}_N$ in Section \ref{sec: Existing Models}, we identify $M < N$ samples from these $N$ samples, along with a corresponding probability distribution, to build an SAA model with these $M$ samples, 
while this small-sized model can generate an optimal value to closely approximate the SAA model \eqref{model: SAA} with the initial $N$ samples. 

With any $M \leq N$, we let $\mathcal{S}_0(M)$ denote the set that contains all subsets of $\{\tilde{\boldsymbol{\xi}}_1, \ldots, \tilde{\boldsymbol{\xi}}_N \}$, each with a size $M$, i.e., $\mathcal{S}_0(M) = \{ \tilde{\mathcal{S}} \subseteq \{\tilde{\boldsymbol{\xi}}_1, \ldots, \tilde{\boldsymbol{\xi}}_N \} \ | \ |\tilde{\mathcal{S}}| = M \}$.
The scenario reduction problem that helps approximate model \eqref{model: SAA} and reduce the number of scenarios from $N$ to $M$ can be formulated as
\begin{align}
    \min_{\tilde{\mathcal{S}} \in \mathcal{S}_0(M)} \min_{\mathbb{P} \in \mathcal{D}_0(\tilde{\mathcal{S}})} \left| \min_{\mathbf{x} \in \mathcal{X}} \mathbb{E}_{\mathbb{P}} \left[ f \left( \mathbf{x}, \boldsymbol{\xi} \right) \right] 
    - \min_{\mathbf{x} \in \mathcal{X}} \mathbb{E}_{\mathbb{P}_0} \left[ f \left( \mathbf{x}, \boldsymbol{\xi} \right) \right] 
    \right|. \label{model: scenario reduction}
\end{align}
Model \eqref{model: scenario reduction} identifies an optimal subset $\tilde{\mathcal{S}}^*$ with size $M$ and an optimal distribution on $\tilde{\mathcal{S}}^*$ to build the small-sized SAA model that yields the optimal value closest to the one obtained with $N$ samples.
Note that this model can be intractable, exhibiting a significant challenge to solve.
Nevertheless, we can quickly obtain high-quality feasible solutions by employing the HO method.

Specifically, we select $M$ scenarios randomly from $\{\tilde{\boldsymbol{\xi}}_1, \ldots, \tilde{\boldsymbol{\xi}}_N \}$, 
denoted by $\tilde{\mathcal{S}}^{\prime} = \{ \tilde{\boldsymbol{\zeta}}_j^{\prime}, \ j\in [M] \}$, 
and establish the empirical distribution on $\tilde{\mathcal{S}}^{\prime}$, 
denoted by $\tilde{\mathbb{P}}_0 (\tilde{\mathcal{S}}^{\prime})= \sum_{j \in [M]} \delta_{\tilde{\boldsymbol{\zeta}}_j^{\prime}} / M$.
Then, we use $\min_{\mathbf{x} \in \mathcal{X}} \mathbb{E}_{\tilde{\mathbb{P}}_0 (\tilde{\mathcal{S}}^{\prime})} [ f ( \mathbf{x}, \boldsymbol{\xi} ) ]$, 
which considers $M$ scenarios, 
to approximate $\min_{\mathbf{x} \in \mathcal{X}} \mathbb{E}_{\mathbb{P}_0} [ f ( \mathbf{x}, \boldsymbol{\xi} ) ]$, 
which considers $N$ scenarios.
While obtaining $\tilde{\mathbb{P}}_0 (\tilde{\mathcal{S}}^{\prime})$ is straightforward, it may not yield a satisfactory approximation because it may fail to leverage certain information contained in the initial $N$ scenarios. 
To enhance the approximation quality, we resort to our proposed HO framework, which helps incorporate certain distributional information (i.e., $\mathcal{D}$ in model \eqref{model_harmonizing}), 
highlighting the effectiveness of our HO method in scenario reduction. 

We first establish $\tilde{\mathbb{P}}_0 (\tilde{\mathcal{S}}^{\prime})$ and extract the partial distributional information from the $N$ samples to construct the ambiguity set $\mathcal{D}$. 
Then, we use the following HO model
\begin{align}
\min_{\mathbf{x} \in \mathcal{X}} \left\{ \left( 1 - \lambda \right) \mathbb{E}_{\tilde{\mathbb{P}}_0 ( \tilde{\mathcal{S}}^{\prime} ) } \left[ f \left( \mathbf{x}, \boldsymbol{\xi} \right) \right] + \lambda \max_{\mathbb{P} \in \mathcal{D}} \mathbb{E}_{\mathbb{P}}\left[ f \left( \mathbf{x}, \boldsymbol{\xi} \right) \right] \right\} \label{hro_scenario_reduction} 
\end{align}
to approximate the original SAA model $\min_{\mathbf{x} \in \mathcal{X}} \mathbb{E}_{\mathbb{P}_0} \left[ f \left( \mathbf{x}, \boldsymbol{\xi} \right) \right]$, where we set $\lambda = 1 - \sqrt{M} / \sqrt{N}$.
Note that when $M=N$, i.e., no scenario reduction, the above HO model \eqref{hro_scenario_reduction} recovers the original SAA model \eqref{model: SAA}.
As $M$ decreases, which implies fewer scenarios are considered, $\lambda$ correspondingly increases. 
This results in a larger-size ambiguity set $\mathcal{D}_{\textup H}(\lambda)$, as suggested by Proposition \ref{prop: ambiguity_set_size}. 
The expansion of $\mathcal{D}_{\textup H}(\lambda)$ ensures that the obtained solution can effectively hedge against the increased uncertainty induced by scenario reduction, thereby maintaining the solution's quality. 
Note that this method of estimating $\lambda$ is different from those introduced in Section \ref{sec: estimate lambda} and we name it as the estimation method (iv) \textbf{scenario reduction estimation}.

Different from existing scenario reduction approaches, such as the approach in \cite{rujeerapaiboon2022scenario} that needs to evaluate the model's performance with respect to each of $N$ scenarios iteratively, our proposed HO uses the partial distributional information from the $N$ scenarios, thereby maintaining its efficiency even when $N$ is very large. 
Specifically, an existing approach considers the initial $N$ samples $\{\tilde{\boldsymbol{\xi}}_1, \ldots, \tilde{\boldsymbol{\xi}}_N \}$ with its corresponding distribution $\mathbb{P}_{\textup N} = \sum_{i \in [N]} \eta_i \delta_{\tilde{\boldsymbol{\xi}}_i}$, where $\eta_i \in [0,1]$ for any $i \in [N]$ represents the probability of the $i$-th sample, and aims to identify a subset of samples with a distribution closest to $\mathbb{P}_{\textup N}$ \citep{dupavcova2003scenario, rujeerapaiboon2022scenario}.
For example, \cite{rujeerapaiboon2022scenario} perform scenario reduction by identifying a subset $\{\tilde{\boldsymbol{\zeta}}_j, \ j\in [M]\} \subseteq \{\tilde{\boldsymbol{\xi}}_i, \ i\in [N]\}$ that has a distribution $\mathbb{Q}^* = \sum_{j \in [M]} \omega_j \delta_{\tilde{\boldsymbol{\zeta}}_j}$ closest to $\mathbb{P}_{\textup N}$, in terms of type-$l$ Wasserstein distance.
Here $\omega_j \in [0,1]$ for any $j \in [M]$ stands for the probability of the $j$-th sample. 
The type-$l$ Wasserstein distance between $\mathbb{Q}^*$ and $\mathbb{P}_{\textup N}$ is calculated by
\vspace{-2mm}
\begin{align*}
    d_l(\mathbb{P}_{\textup N}, \mathbb{Q}^*) = \left\{ \min_{\boldsymbol{\gamma} \in \mathbb{R}_+^{N \times M}} 
    \left\{ 
    \sum_{i \in [N]} \sum_{j \in [M]} \gamma_{i,j} \| \tilde{\boldsymbol{\xi}}_i - \tilde{\boldsymbol{\zeta}}_j \|^l \  \middle|   \
    \sum_{j \in [M]} \gamma_{i,j} = \eta_i, \forall i \in [N], 
    \sum_{i \in [N]} \gamma_{i,j} = \omega_j, \forall j \in [M]
    \right\}
    \right\}^{\frac{1}{l}}.
\end{align*}
They further solve the following problem to obtain $\mathbb{Q}^*$:
\vspace{-2mm}
\begin{align}
    G_l \left( \mathbb{P}_{\textup N}, M \right) = \min _{\mathbb{Q}} \left\{ d_l \left( \mathbb{P}_{\textup N}, \mathbb{Q} \right) \ \middle| \ \mathbb{Q} \in \mathcal{D}_0(\tilde{\mathcal{S}}), \ \tilde{\mathcal{S}} \in \mathcal{S}_0(M)
    \right\}. \label{scenario_reduction_problem}
\end{align}

\vspace{-2mm}

To solve problem \eqref{scenario_reduction_problem}  efficiently, \cite{rujeerapaiboon2022scenario} propose a polynomial-time constant-factor approximation algorithm based on a local search algorithm in \cite{arya2004local} (see Algorithm \ref{alg:local search} in Appendix \ref{app: lot sizing}).
While this algorithm serves as an approximation technique to determine an upper bound (denoted by $\overline{G}_l (\mathbb{P}_{\textup N}, M)$) for $G_l (\mathbb{P}_{\textup N}, M)$, it can attain a satisfactory bound for $\overline{G}_l (\mathbb{P}_{\textup N}, M) / G_l (\mathbb{P}_{\textup N}, M)$. 
However, this algorithm needs to evaluate the model's performance with respect to each of $N$ scenarios in each iteration, resulting in an obvious computational time, especially when $N$ is large. 
Moreover, even if we can obtain $\mathbb{Q}^*$ successfully, it may not yield the optimal value that is closest to the one obtained under $\mathbb{P}_{\textup N}$. 
In contrast, our HO framework in scenario reduction can maintain its efficiency for any size of $N$.
HO can achieve strong performance with only a few scenarios by retaining information about the dropped ones. Even when $N$ is very large, making the original problem extremely difficult to solve, HO requires only a few scenarios while maintaining effectiveness. Consequently, when $N$ is large, our method's advantages are more pronounced.
Meanwhile, both $\tilde{\mathbb{P}}_0 (\tilde{\mathcal{S}}^{\prime})$ and partial distributional information can be quickly and easily identified from the initial $N$ scenarios, helping us to establish an optimization-based approach that incorporates the knowledge of the problem.
Such results highlight the significance of our HO framework in helping decision-makers reduce the number of scenarios to consider, thereby simplifying decision-making under uncertainty and ensuring high-quality solutions.
We demonstrate these advantages with numerical results in Section \ref{sec: lot sizing}.
Establishing this practical significance of HO also differentiates our study from \cite{tsang2025tradeoff}.

\section{Numerical Experiments} \label{sec: Numerical Experiments}

We conduct numerical experiments to provide insights into the performance of our proposed model \eqref{model_harmonizing}. 
The model is implemented in MATLAB R2023a by the modeling language CVX with the Mosek solver on a PC with an Intel(R) Core(TM) i9-13900K @ 3.00 GHz processor.
We apply our methodologies, including model \eqref{model_harmonizing} and parameter estimation methods (i)--(iv), to two industrial applications: mean-risk portfolio optimization and lot sizing on a network.
We examine the significance of HO by comparing its out-of-sample performance against the performance of other solution approaches.
In our experiments, we evaluate the out-of-sample performance of the solution obtained by any approach using $10^6$ test samples, which are separate from the $N$ training samples used to compute the solution.
Parameter settings are detailed in Appendix \ref{app: Parameter Settings in Numerical Experiments}.

\vspace{-2mm}

\subsection{Mean-risk Portfolio Optimization} \label{sec: Portfolio Optimization}



Consider a capital market consisting of $m$ assets whose returns are captured by random parameters $\boldsymbol{\xi} = (\xi_1, \ldots, \xi_m)^{\top} \in \mathbb{R}^m$. 
With a fixed capital, one invests a percentage $x_i$ in the $i$-th asset, leading to a portfolio investment decision $\mathbf{x} = (x_1, \ldots, x_m)^{\top} \in \mathbb{R}^m$. 
We formulate the HO counterpart of the mean-risk portfolio optimization problem as
\vspace{-2mm}
\begin{align} \label{model: portfolio}
\min_{\mathbf{x} \in \mathcal{X}} \max_{\mathbb{P} \in \mathcal{D}_{\textup H}\left(\lambda\right)} 
\left\{
\mathbb{E}_{\mathbb{P}} \left[ -\mathbf{x}^{\top} \boldsymbol{\xi} \right] + \rho \mathbb{P} \text{-CVaR}_{a}\left( -\mathbf{x}^{\top} \boldsymbol{\xi} \right)
\right\},
\end{align}
\vspace{-6mm}

\noindent
where $\mathcal{X} = \{ \mathbf{x} \in \mathbb{R}_+^m \ | \ \mathbf{1}^{\top} \mathbf{x} = 1 \}$, $\rho \in \mathbb{R}_+$ reflects the decision maker’s risk-aversion preference, and $\mathbb{P} \text{-CVaR}_{a}( -\mathbf{x}^{\top} \boldsymbol{\xi} )$ quantifies conditional value-at-risk, i.e., the average of the $a \times 100\%$ worst portfolio losses under the distribution $\mathbb{P}$ \citep{rockafellar2000optimization}. 
Similarly, we can formulate the Wasserstein-based DRO counterpart of this problem \citep{esfahani2018data}.

Following similar steps as in \cite{esfahani2018data}, we can replace the CVaR in \eqref{model: portfolio} with its formal definition and further rewrite \eqref{model: portfolio} as 
\vspace{-2mm}
\begin{align} \label{model: portfolio_2}
\min_{\mathbf{x} \in \mathcal{X}, \tau \in \mathbb{R}} \max_{\mathbb{P} \in \mathcal{D}_{\textup H}\left(\lambda\right)} \mathbb{E}_{\mathbb{P}} \left[ \max_{k \leq K} 
\left\{ \alpha_k \mathbf{x}^{\top} \boldsymbol{\xi} + \beta_k \tau \right\}
\right],
\end{align}
\vspace{-8mm}

\noindent
where $K=2$, $\alpha_1 = -1$, $\alpha_2 = -1 - \rho/a$, $\beta_1 = \rho$, and $\beta_2 = \rho (1 - 1/a)$. 
We can then reformulate \eqref{model: portfolio_2} to a computationally tractable form by Theorem \ref{theorem: reformulation}, 
which can be applied here because $\max_{k \leq K} \{ \alpha_k \mathbf{x}^{\top} \boldsymbol{\xi} + \beta_k \tau \}$ is piecewise affine convex in $\boldsymbol{\xi}$ (see details in Appendix \ref{app: setup portfolio}).

We compare out-of-sample performances of our model \eqref{model_harmonizing} with two Wasserstein-based DRO models: (i) the model in \cite{esfahani2018data} that uses only data samples (denoted by ``Wasserstein'') and (ii) the model in \cite{gao2017distributionally} that uses both data samples and moment information (denoted by ``W+M'').
Specifically, ``Wasserstein'' and ``W+M'' incorporate their ambiguity sets $\mathcal{D}_{\textup W}$ and $\mathcal{D}_{\textup C}$, respectively, as follows:
\vspace{-3mm}
\begin{align*}
& \mathcal{D}_{\textup W} = \left\{\mathbb{P} \ \middle| \ W \left( \mathbb{P}, \mathbb{P}_0 \right) \leq r_{\textup W} \right\}, \\
& \mathcal{D}_{\textup C} = \left\{\mathbb{P} \ \middle|  \  \left( \mathbb{E}_{\mathbb{P}}[ \boldsymbol{\xi} ] -\boldsymbol{\mu} \right)^{\top} \boldsymbol{\Sigma}^{-1} \left( \mathbb{E}_{\mathbb{P}}[ \boldsymbol{\xi} ] - \boldsymbol{\mu} \right) \leq \gamma_1, \mathbb{E}_{\mathbb{P}} [ \left( \boldsymbol{\xi} - \boldsymbol{\mu} \right)  \left( \boldsymbol{\xi} - \boldsymbol{\mu} \right)^{\top} ] \preceq \gamma_2\boldsymbol{\Sigma}, \ W \left( \mathbb{P}, \mathbb{P}_0 \right) \leq r_{\textup C} \right\},
\end{align*}
\vspace{-9mm}

\noindent
where $W: \mathcal{D}_0 (\mathbb{R}^m) \times \mathcal{D}_0 (\mathbb{R}^m) \rightarrow \mathbb{R}_+$ denotes the Wasserstein metric.
Clearly, $\mathcal{D}_{\textup C}$ is the intersection of $\mathcal{D}_{\textup W}$ and the moment-based ambiguity set  $\mathcal{D}_{\textup M} = 
\{\mathbb{P} \ | \  ( \mathbb{E}_{\mathbb{P}}[ \boldsymbol{\xi} ] -\boldsymbol{\mu} )^{\top} \boldsymbol{\Sigma}^{-1} (\mathbb{E}_{\mathbb{P}}[ \boldsymbol{\xi} ] - \boldsymbol{\mu}) \leq \gamma_1, \ \mathbb{E}_{\mathbb{P}} [ ( \boldsymbol{\xi} - \boldsymbol{\mu} ) ( \boldsymbol{\xi} - \boldsymbol{\mu} )^{\top} ] \preceq \gamma_2\boldsymbol{\Sigma} \}$, i.e., $\mathcal{D}_{\textup C} = \mathcal{D}_{\textup W} \cap \mathcal{D}_{\textup M}$.

To ensure a fair comparison, we keep the same parameter settings as in \cite{esfahani2018data}.
We determine the Wasserstein radii: $r_{\textup W}$ for ``Wasserstein'' and  $r_{\textup C}$ for ``W+M,'' using the same approach of $K$-fold cross-validation as described in \cite{esfahani2018data}.
We assess methods (i)--(iii) of estimating $C$ as introduced in Section \ref{sec: estimate lambda},
and assess model \eqref{model_harmonizing} with $\mathcal{D}$ being $\mathcal{D}_{\textup D}$, constructed based on a given support $\mathcal{S} \subseteq \mathbb{R}^m$, mean $\boldsymbol{\mu} \in \mathbb{R}^m$ and deviation $\boldsymbol{\delta} \in \mathbb{R}^m$:
\vspace{-2mm}
\begin{align*}
\mathcal{D}_{\textup D} = \left\{ \mathbb{P} \ | \  \mathbb{E}_{\mathbb{P}} \left[ \boldsymbol{\xi} \right] = \boldsymbol{\mu}, \  \mathbb{E}_{\mathbb{P}} \left[ \mathbf{u} \right] = \boldsymbol{\delta}, \ \mathbb{P} \left(
\boldsymbol{\xi} \in \mathcal{S}
\right) = 1, \ \mathbb{P} \left(
   \mathbf{u} \geq \boldsymbol{\xi} - \boldsymbol{\mu}, \ \mathbf{u} \geq \boldsymbol{\mu} - \boldsymbol{\xi}
    \right)
= 1 \right\}.
\end{align*}
\vspace{-9mm}

Figure \ref{fig:out_portfolio_mad} shows the performance of model \eqref{model_harmonizing} when the number of samples, i.e., $N$, varies.
Specifically, we vary $N \in \{25, 50, 75, 100, 150, 200, 300, 400, 500\}$, and accordingly $M_0 = 25$.
For each instance, we perform 200 independent runs and report the average result.
We use ``MAD'' to denote model \eqref{model_harmonizing} with $\mathcal{D}_{\textup D}$, and use ``Cross,'' ``Gap,'' and ``$\sqrt{M_0}$'' to denote estimation methods (i), (ii), and (iii), respectively.
For instance, ``MAD\_Gap'' in Figure \ref{fig:out_portfolio_mad} indicates model \eqref{model_harmonizing} with an ambiguity set $\mathcal{D}_{\textup D}$, where we use method (ii) to estimate the value of $C$.
We use the true information of $\boldsymbol{\mu}$ and $\boldsymbol{\Sigma}$ to construct $\mathcal{D}_{\textup D}$.
Besides, we only estimate $C$ when $N = 25$, irrespective of the estimation methods used. 
Once $C$ is determined, we calculate $\lambda$ as $C/\sqrt{N}$ when $N$ varies.

\vspace{-4mm}

\begin{figure}[!htb]
    \begin{minipage}{0.4\textwidth}
     \centering
    \includegraphics[scale=0.4]{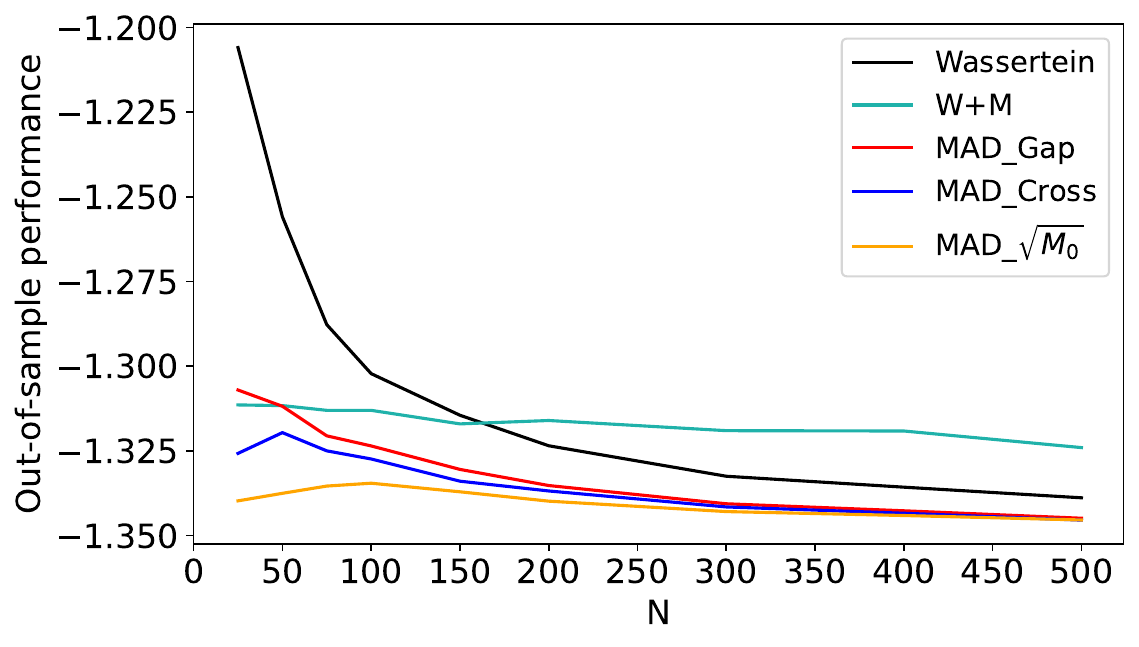}
    \caption{\centering Out-of-sample Performance of Model \eqref{model_harmonizing} with $\mathcal{D}_{\textup D}$}\label{fig:out_portfolio_mad}
    \end{minipage}
    \hfill
    \begin{minipage}{0.5\textwidth}
     \centering
    \includegraphics[scale=0.4]{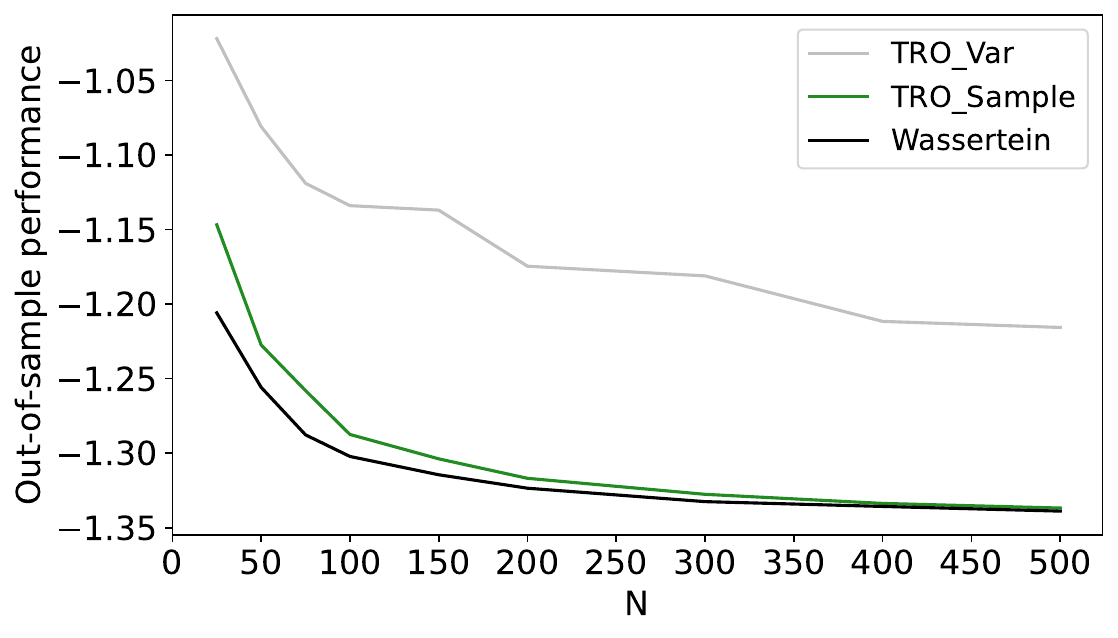}
    \vspace{0mm}
    \caption{\centering Out-of-sample Performance of TRO Model in \cite{tsang2025tradeoff}}\label{fig:out_portfolio_tsang}
  \end{minipage}
\end{figure}

\vspace{-9mm}

First, we compare model \eqref{model_harmonizing} with the ``Wasserstein'' model, i.e., the Wasserstein-based DRO model using only the empirical distribution derived from data samples.
Clearly, model \eqref{model_harmonizing} consistently outperforms ``Wasserstein,'' regardless of the value of $N$ and the methods used to estimate $C$.
The advantages of model \eqref{model_harmonizing} are more pronounced when $N$ is smaller, in terms of the out-of-sample results.
As $N$ increases, the out-of-sample results of all models turn to converge, affirming the asymptotic consistency across these models.
We also observe consistently reliable results obtained from different methods of estimating $C$, demonstrating their effectiveness.
Note that the ``Wasserstein'' model utilizes the empirical distribution derived from data only. 
When $N$ is small, i.e., data is limited, the empirical distribution may largely deviate from the true distribution, potentially leading to an unreliable or excessively large ambiguity set.
Specifically, if the Wasserstein radius is small, it may generate an ambiguity set where distributions are close to the empirical distribution but far from the true distribution. Conversely, a large radius may result in an excessively large ambiguity set, leading to an overly conservative solution.
Our HO approach overcomes these drawbacks and demonstrates superior performance by integrating data and information and adjusting their weights adaptively based on data size.
When data is limited, its weight becomes small, and its impact is mitigated, thereby reducing the effect of data scarcity.
Meanwhile, the weight of information becomes large, and its influence is amplified, guiding the model to capture the true distribution. 
More importantly, since the weight of information $\lambda$ is adaptively adjusted as the amount of data 
$N$ changes, we can confidently apply our HO approach without concern for whether the data is limited or sufficient.

Second, we compare model \eqref{model_harmonizing} with ``W+M'' model, i.e., Wasserstein-based DRO model using both data and information.
Model \eqref{model_harmonizing} demonstrates comparable out-of-sample performance to ``W+M'' when data is limited (e.g., $N \leq 50$).
These two models overcome the drawback of data scarcity and demonstrate superior performance when data is limited because they use moment information, which is particularly beneficial in such situations.
However, as $N$ increases, model \eqref{model_harmonizing} exhibits superior out-of-sample performance compared to ``W+M,'' with its advantages becoming more pronounced as $N$ grows.
This indicates that, despite both models using the same data, model \eqref{model_harmonizing} owns a 
stronger ability to leverage data to enhance solution quality than ``W+M.''
These results confirm the effectiveness of our methods for determining $C$ and $\lambda$, which shape the ability of model \eqref{model_harmonizing} to leverage data.
They also imply that the radius $r_{\textup C}$ obtained by cross-validation may not be ideal, limiting ``W+M'' model's ability to leverage data effectively.
Comparatively, cross-validation can achieve a better radius $r_{\textup W}$ for ``Wasserstein'' model, as evidenced by its superior out-of-sample performance over ``W+M'' when $N$ is large.
Note that this does not imply that ``Wasserstein'' outperforms ``W+M,'' because they use different radii.
For example, when $N=500$, the best-estimated radius is $r_{\textup W}=0.01$ for ``Wasserstein'' but $r_{\textup C}=0.09$ for ``W+M.''
However, a radius $r_{\textup C} = 0.01$ leads to $\mathcal{D}_{\textup C} = \emptyset$ for ``W+M.''
This occurs because the empirical distribution $\mathbb{P}_0$ derived from $N=500$ samples does not satisfy the moment conditions, i.e., $\mathbb{P}_0 \notin \mathcal{D}_{\textup M}$, and the radius $r_{\textup C}=0.01$ is too small for $\mathcal{D}_{\textup W}$ to intersect with $\mathcal{D}_{\textup M}$, resulting $\mathcal{D}_{\textup C} = \mathcal{D}_{\textup M} \cap \mathcal{D}_{\textup W} = \emptyset$.
We also check that ``W+M'' exhibits better out-of-sample performance than ``Wasserstein'' when $r_{\textup W} = r_{\textup C} = 0.09$.
Moreover, when data is limited, ``W+M'' exhibits better out-of-sample performance than ``Wasserstein,'' aligning with the findings in \cite{gao2017distributionally}.

Figure \ref{fig:out_portfolio_tsang} shows the performance of the TRO model in \cite{tsang2025tradeoff}, with the Wasserstein-based DRO model serving as the benchmark.
We test the TRO model with two types of ambiguity sets: a mean-variance ambiguity set, referred to as ``TRO\_Sample,'' and a $\phi$-divergence ball based on total variation distance, referred to as ``TRO\_Var.''
We use the same parameter settings for the ambiguity set in the TRO model as those in \cite{tsang2025tradeoff}, and determine the weight parameter using the same cross-validation approach as described therein.
By their settings, the mean and variance used in ``TRO\_Sample" are obtained from the $N$ samples.
Therefore, these TRO models (``TRO\_Sample" and ``TRO\_Var") do not incorporate partial distributional information.
Figure \ref{fig:out_portfolio_tsang} shows that ``Wasserstein'' consistently outperforms the TRO model, regardless of the ambiguity set adopted in the TRO model or the value of $N$.
This indicates that the proposed ``TRO\_Sample" and ``TRO\_Var" in \cite{tsang2025tradeoff} are less effective than ``Wasserstein," highlighting the need to carefully choose the ambiguity set. A poor choice may yield worse results than simply using the Wasserstein-based DRO.

\vspace{-6mm}

\begin{table}[htbp]
    \hspace{-1cm}
    \caption{Time (s) of Model \eqref{model_harmonizing} with $\mathcal{D}_{\textup D}$} 
    \centering
    \scriptsize
    \scalebox{1}{
    \begin{tabular}{ccccccccccccccc}
    \toprule
    \multirow{2}[1]{*}{$N$} & \multicolumn{2}{c}{\textbf{MAD\_Gap}} &       & \multicolumn{2}{c}{\textbf{MAD\_Cross}} &       & \multicolumn{2}{c}{\textbf{MAD\_$\sqrt{M_0}$}} &       & \multicolumn{2}{c}{\textbf{Wasserstein}} &       & \multicolumn{2}{c}{\textbf{W+M}} \\
\cmidrule{2-3}\cmidrule{5-6}\cmidrule{8-9}\cmidrule{11-12}\cmidrule{14-15}          & PREP   & COMP  &       & PREP   & COMP  &       & PREP   & COMP  &       & PREP   & COMP  &       & PREP   & COMP \\
    \midrule
    25    & 13.52 & 0.51  &       & 12.32 & 0.52  &       & 0     & 0.55  &       & 66.50  & 0.50  &       & 507.46  & 4.78  \\
    50    & 0     & 0.60  &       & 0     & 0.64  &       & 0     & 0.66  &       & 79.43 & 0.62  &       & 1,061.87  & 9.79  \\
    75    & 0     & 0.72  &       & 0     & 0.75  &       & 0     & 0.81  &       & 93.03 & 0.74  &       & 1,566.39  & 14.68  \\
    100   & 0     & 0.85  &       & 0     & 0.84  &       & 0     & 0.94  &       & 111.73 & 0.90  &       & 2,116.07  & 20.15  \\
    150   & 0     & 1.19  &       & 0     & 1.28  &       & 0     & 1.24  &       & 138.99 & 1.23  &       & 3,353.33  & 32.01  \\
    200   & 0     & 1.55  &       & 0     & 1.58  &       & 0     & 1.44  &       & 172.62 & 1.47  &       & 4,494.62  & 49.60  \\
    300   & 0     & 2.26  &       & 0     & 2.32  &       & 0     & 2.14  &       & 251.85 & 2.21  &       & 8,876.05  & 90.69  \\
    400   & 0     & 2.92  &       & 0     & 3.08  &       & 0     & 3.00  &       & 319.88 & 2.76  &       & 13,114.74  & 136.26  \\
    500   & 0     & 3.60  &       & 0     & 3.61  &       & 0     & 3.51  &       & 401.99 & 3.21  &       & 16,748.81  & 166.65  \\
    \midrule
    Average & 1.50  & 1.58  &       & 1.37  & 1.62  &       & 0     & 1.59  &       & 181.78  & 1.52  &       & 5,759.93  & 58.29  \\
    \bottomrule
    \end{tabular}
	}
	\label{tab:time_portfolio_mad}
\end{table}

\vspace{-3mm}

In addition, Table \ref{tab:time_portfolio_mad} presents the preparation time (column ``PREP'') that each model takes for parameter estimation, as well as the computational time (column ``COMP'') needed for the solving process.
Specifically, the preparation time of model \eqref{model_harmonizing} refers to the time for estimating $C$, while the preparation time of both ``Wasserstein'' and ``W+M'' refers to the time for estimating their radii.
Since $C$ is only estimated once when $N=25$ and we set $\lambda = C/\sqrt{N}$ as $N$ increases, model \eqref{model_harmonizing} can be applied directly when $N > 25$, leading to a preparation time of $0$ for $N > 25$.
Clearly, model \eqref{model_harmonizing} requires significantly less preparation time than ``Wasserstein'' and ``W+M'' models, regardless of the methods used to estimate $C$.
In terms of the computational time, model \eqref{model_harmonizing} is comparable to ``Wasserstein,''  whereas ``W+M'' requires significantly more time.
Alongside Figure \ref{fig:out_portfolio_mad}, it is evident that \textit{an appropriate value of $\lambda$ in model \eqref{model_harmonizing} can be easily and rapidly estimated, enabling the model to quickly obtain a solution with strong out-of-sample performance for any data size}.

We further examine the performance of model \eqref{model_harmonizing} with $\mathcal{D}$ being $\mathcal{D}_{\textup T}$, which exhibits a trend similar to that observed when $\mathcal{D}$ being $\mathcal{D}_{\textup D}$, as shown in Figure \ref{fig:out_portfolio_moment} and Table \ref{tab:time_portfolio_moment}.

\vspace{-5mm}

\begin{figure}[!htb]
\centering
  \includegraphics[scale=0.4]{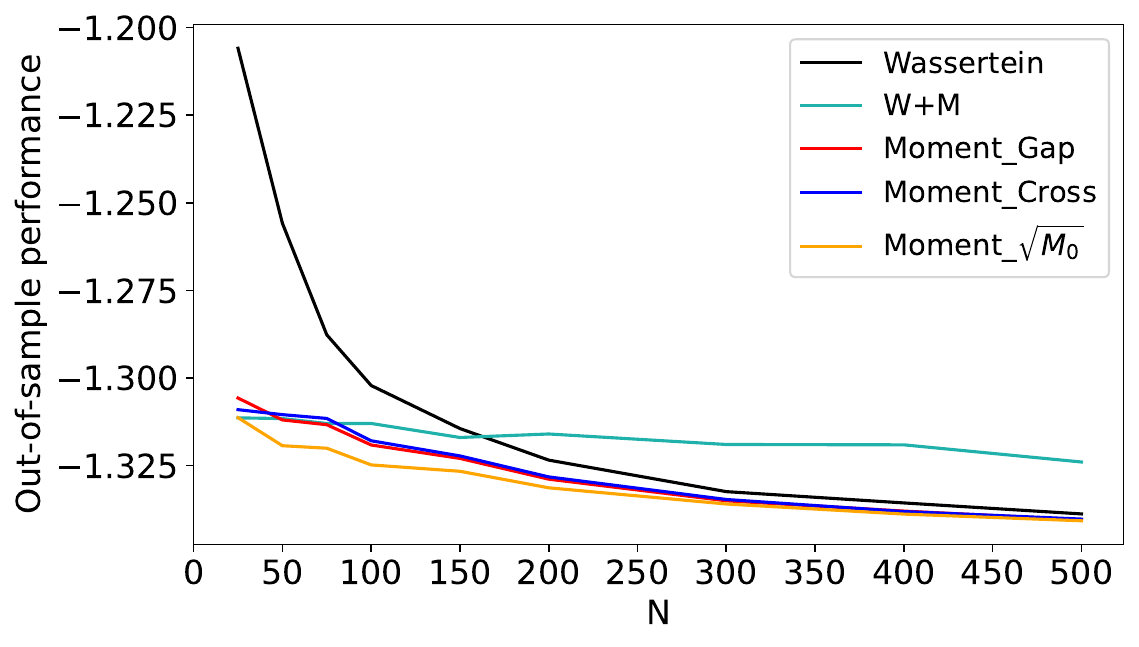}
  \vspace{-2mm}
\caption{\centering Out-of-sample Performance of Model \eqref{model_harmonizing} with $\mathcal{D}_{\textup T}$}
\label{fig:out_portfolio_moment}
\end{figure}

\vspace{-12mm}

\begin{table}[htbp]
    \hspace{-1cm}
    \caption{Time (s) of Model \eqref{model_harmonizing} with $\mathcal{D}_{\textup T}$} 
    \centering
    \scriptsize
    \scalebox{1}{
    \begin{tabular}{ccccccccccccccc}
    \toprule
    \multirow{2}[1]{*}{$N$} & \multicolumn{2}{c}{\textbf{Moment\_Gap}} &       & \multicolumn{2}{c}{\textbf{Moment\_Cross}} &       & \multicolumn{2}{c}{\textbf{Moment\_$\sqrt{M_0}$}} &       & \multicolumn{2}{c}{\textbf{Wasserstein}} &       & \multicolumn{2}{c}{\textbf{W+M}} \\
\cmidrule{2-3}\cmidrule{5-6}\cmidrule{8-9}\cmidrule{11-12}\cmidrule{14-15}          & PREP & COMP &       & PREP & COMP &       & PREP & COMP &       & PREP & COMP &       & PREP & COMP \\
    \midrule
    25    & 11.07 & 0.29  &       & 5.89  & 0.26  &       & 0     & 0.24  &       & 66.50  & 0.50  &       & 507.46  & 4.78  \\
    50    & 0     & 0.37  &       & 0     & 0.38  &       & 0     & 0.37  &       & 79.43 & 0.62  &       & 1,061.87  & 9.79  \\
    75    & 0     & 0.52  &       & 0     & 0.51  &       & 0     & 0.51  &       & 93.03 & 0.74  &       & 1,566.39  & 14.68  \\
    100   & 0     & 0.65  &       & 0     & 0.63  &       & 0     & 0.65  &       & 111.73 & 0.90  &       & 2,116.07  & 20.15  \\
    150   & 0     & 0.96  &       & 0     & 0.94  &       & 0     & 0.94  &       & 138.99 & 1.23  &       & 3,353.33  & 32.01  \\
    200   & 0     & 1.28  &       & 0     & 1.35  &       & 0     & 1.23  &       & 172.62 & 1.47  &       & 4,494.62  & 49.60  \\
    300   & 0     & 2.00  &       & 0     & 1.96  &       & 0     & 1.90  &       & 251.85 & 2.21  &       & 8,876.05  & 90.69  \\
    400   & 0     & 2.70  &       & 0     & 2.75  &       & 0     & 2.50  &       & 319.88 & 2.76  &       & 13,114.74  & 136.26  \\
    500   & 0     & 3.49  &       & 0     & 3.37  &       & 0     & 3.18  &       & 401.99 & 3.21  &       & 16,748.81  & 166.65  \\
    \midrule
    Average & 1.23  & 1.36  &       & 0.65  & 1.35  &       & 0     & 1.28  &       & 181.78  & 1.52  &       & 5,759.93  & 58.29  \\
    \bottomrule
    \end{tabular}%
	}
	\label{tab:time_portfolio_moment}
\end{table}

\vspace{-8mm}

\subsection{Lot Sizing on a Network} \label{sec: lot sizing}

Lot sizing is one of the most significant and difficult problems in production planning \citep{bertsimas2016duality, long2023supermodularity}.
It focuses on a network with a total of $m$ stores, with each store $i \in [m]$ facing a random demand $\xi_i$.
In the first stage where the uncertain demands $\boldsymbol{\xi}$ are not realized yet, we determine a positive allocation $x_i$ for each store $i \in [m]$, which is limited by an upper bound $K_i$.
The unit storage cost for the allocation at store $i \in [m]$ is $a_i$.
In the second stage, after realizing $\boldsymbol{\xi}$, we transport stock $y_{i,j}$ from store $i \in [m]$ to $j \in [m]$ at a unit cost $b_{i,j}$, and the transport amount is bounded by $Y_{i,j}$.
The demand shortage at any store $i \in [m]$, denoted by $z_i$, incurs a penalty of $c_i z_i$, where $c_i$ is the unit penalty at store $i$.
We formulate the HO counterpart of the lot sizing problem as
\begin{align} \label{model: lot sizing}
    \min_{\mathbf{x}} \ \left\{ \mathbf{a}^{\top} \mathbf{x} + \max_{\mathbb{P} \in \mathcal{D}_{\textup H}\left(\lambda\right) } \mathbb{E}_{\mathbb{P}} \left[ f \left( \mathbf{x}, \boldsymbol{\xi} \right)  \right] \ | \
    0 \leq x_i \leq K_i, \ \forall \, i \in [m] \right\},
\end{align}
where
\vspace{-5mm}
\begin{align}
    f \left( \mathbf{x}, \boldsymbol{\xi} \right) = \min_{\mathbf{y}, \mathbf{z}} \ & \sum_{i \in [m]}\sum_{j \in [m]} b_{i,j} y_{i,j} + \sum_{i \in [m]} c_i z_i \nonumber \\
    \mbox{s.t.} \ & \sum_{j \in [m]} y_{j,i} - \sum_{j \in [m]} y_{i,j} + z_{i} \geq \xi_i - x_i, \ \forall \, i \in [m], \label{cons: balance} \\
    & 0 \leq y_{i,j} \leq Y_{i,j}, \ \forall \, i \in [m], j \in [m]; \ \ z_i \geq 0, \ \forall \, i \in [m]. \nonumber 
\end{align}

\vspace{-4mm}

\noindent
Constraints \eqref{cons: balance} enforce the balance among the shift stock to and from store $i \in [m]$, shortage, demand, and allocation at store $i$. 

Model \eqref{model: lot sizing} is hard to solve in general because of its two-stage nature.
To enhance the solving process, we apply Algorithm 1 in \cite{long2023supermodularity} to solve the two-stage HO model \eqref{model: lot sizing} with $\mathcal{D}_{\textup D}$ (see details in Appendix \ref{app: lot sizing}).
We investigate the performance of model \eqref{model: lot sizing} for scenario reduction, where $N$ scenarios are reduced to $M$.
Specifically, we compare our model with the SAA model using $M$ scenarios, which are reduced from $N$ scenarios by two approaches: (i) ``Random:" selecting $M$ scenarios randomly from $N$, and 
(ii) ``Local Search:" selecting $M$ scenarios using the approximation algorithm based on the local search algorithm proposed in \cite{rujeerapaiboon2022scenario}. Further details about these two approaches are included in Appendix \ref{app: lot sizing}.

We conduct experiments for $N \in \{100, 500, 1000\}$ and $M \in \{10, 20, 30, 40, 50\}$.
For each instance, we conduct five independent runs and report the average result.
Note that the model with a small number of scenarios essentially approximates the original model with a large number of scenarios.
We define the approximation error as $|\text{opt}(M)- \text{opt}^*| / |\text{opt}^*| \times 100\%$, where $\text{opt}(M)$ represents the out-of-sample result of the solution obtained by the approach using $M$ samples and $\text{opt}^*$ represents the out-of-sample result of the solution obtained by SAA model using $N$ samples, 
to measure the quality of a solution obtained by any approach using $M$ samples.
We use ``MAD\_$\sqrt{M_0}$" to denote the HO model \eqref{model: lot sizing} with $\mathcal{D}_{\textup D}$, where the estimation method (iv) is used.

\vspace{-3mm}

\begin{table}[htbp]
    \hspace{-1cm}
  \begin{minipage}{0.5\textwidth}
    \centering
    \scriptsize
    \caption{\tabincell{c}{ Computational Time (s) \\ Without Reduction}
    } 
    \scalebox{0.95}{
    \begin{tabular}{cccc}
    \toprule
    $N$ & 100   & 500   & 1000 \\
    \midrule
    Time & \tabincell{c}{970.53 \\ ($\approx$0.27h)} & \tabincell{c}{21,523.79 \\ ($\approx$5.98h)}  & \tabincell{c}{76,148.79 \\ ($\approx$21.15h)} \\
    \bottomrule
    \end{tabular}
	}
	\label{tab:time_no_reduction}
  \end{minipage}
  %
  \hspace{-0cm}
  \begin{minipage}{0.5\textwidth}
    \centering
    \scriptsize
    \captionof{table}{Approximation Error (\%) When $N=100$} 
    \scalebox{0.95}{
    \begin{tabular}{cccccc}
    \toprule
    $M$ & \textbf{MAD\_$\sqrt{M_0}$} & \textbf{MAD\_Gap} & \textbf{MAD\_Cross} & \textbf{Random} & \textbf{ \tabincell{c}{Local \\ Search} } \\
    \midrule
    10    & 1.05  & 1.07  & 6.33  & 224.09  & 273.65  \\
    20    & 1.04  & 1.04  & 5.17  & 62.28  & 108.94  \\
    30    & 1.02  & 1.04  & 0.55  & 25.05  & 30.33  \\
    40    & 0.86  & 0.99  & 0.50  & 14.95  & 12.52  \\
    50    & 0.66  & 0.90  & 0.68  & 11.05  & 9.31  \\
    \bottomrule
    \end{tabular}
	}
	\label{tab:objgap_n_30_N_100}
  \end{minipage}
\end{table}
\vspace{-10mm}
\begin{table}[htbp]
  \begin{minipage}{0.5\textwidth}
    \centering
    \scriptsize
    \caption{Computational Time (s) When $N=100$} 
    \scalebox{0.95}{
    \begin{tabular}{cccccc}
    \toprule
    $M$ & \textbf{MAD\_$\sqrt{M_0}$} & \textbf{MAD\_Gap} & \textbf{MAD\_Cross} & \textbf{Random} & \textbf{ \tabincell{c}{Local \\ Search} } \\
    \midrule
    10    & 420.27  & 465.96  & 474.25  & 7.62  & 16.29  \\
    20    & 566.44  & 630.77  & 498.80  & 18.13  & 45.65  \\
    30    & 551.64  & 607.84  & 534.43  & 49.40  & 115.44  \\
    40    & 807.50  & 864.86  & 774.68  & 190.10  & 164.17  \\
    50    & 911.89  & 977.78  & 796.39  & 297.38  & 287.19  \\
    \bottomrule
    \end{tabular}%
	}
	\label{tab:time_n_30_N_100}
  \end{minipage}
  %
  \begin{minipage}{0.5\textwidth}
    \centering
    \scriptsize
    \captionof{table}{Preparation Time (s) When $N=100$} 
    \scalebox{0.95}{
    \begin{tabular}{cccccc}
    \toprule
    $M$ & \textbf{MAD\_$\sqrt{M_0}$} & \textbf{MAD\_Gap} & \textbf{MAD\_Cross} & \textbf{Random} & \textbf{ \tabincell{c}{Local \\ Search} } \\
    \midrule
    10    & 0     & 4,336.19  & 10,176.17  & 0     & 402.99  \\
    20    & 0     & 0     & 0     & 0     & 564.78  \\
    30    & 0     & 0     & 0     & 0     & 1,921.93  \\
    40    & 0     & 0     & 0     & 0     & 2,523.40  \\
    50    & 0     & 0     & 0     & 0     & 2,572.29  \\
    \bottomrule
    \end{tabular}%
    }
  \label{tab:preparetime_n_30_N_100}%
  \end{minipage}
\end{table}
\vspace{-3mm}

Table \ref{tab:time_no_reduction} reports the computational time taken by the SAA model to solve instances without scenario reduction.
Tables \ref{tab:objgap_n_30_N_100}--\ref{tab:preparetime_n_30_N_100} report the performance of different scenario reduction approaches for various $M$ when $N=100$.
Table \ref{tab:objgap_n_30_N_100} shows that irrespective of the method used to estimate $\lambda$, our HO model dominates the existing scenario reduction approaches across different values of $M$, in terms of the approximation error.
When $M$ is smaller, i.e., more scenarios are reduced, the advantages of our HO model over the existing scenario reduction approaches become more significant. 
The approximation error of our HO model when $M=10$ is even smaller than that of the existing approaches, including both ``Random" and ``Local Search," when $M=50$.
When using either method (ii) or method (iv) to estimate $\lambda$, our proposed HO approach consistently yields a very low and stable approximation error of around $1\%$ across varying $M$.
When using estimation method (i), the approximation error diminishes rapidly with increasing $M$.
With a small $M$, estimation methods (ii) or (iv) yield lower approximation errors, but as $M$ increases, estimation method (i) exhibits superior performance. 
Interestingly, the ``Local Search" approach does not always perform better than the ``Random" approach.
The former yields a lower approximation error than the latter when $M$ becomes large.

Table \ref{tab:time_n_30_N_100} shows the computational time each approach takes in the solving process.
Table \ref{tab:preparetime_n_30_N_100} shows the time consumed by each approach during the preparatory phase before solving the model, such as the time used for $K$-fold cross-validation to estimate $\lambda$ and the time taken by ``Local Search" to select $M$ scenarios.
Recall that when we use method (i) or method (ii) to estimate $\lambda$, we only need to estimate $C$ once when $M=10$.
As $M$ increases, we set $\lambda = C/\sqrt{M}$.
Thus, the preparation time of ``MAD\_Gap" and ``MAD\_Cross" is $0$ when $M > 10$.
As Tables \ref{tab:time_n_30_N_100}--\ref{tab:preparetime_n_30_N_100} show, the superior performance of our model comes with a computational cost as solving HO model \eqref{model: lot sizing} is more time-consuming, when compared to the ``Random" approach.
Nevertheless, the total time of our model, including both computational and preparation times, remains significantly lower than that of both the ``Local Search" approach and the SAA model considering $N=100$ scenarios (i.e., 970.53s as presented in Table \ref{tab:time_no_reduction}).

Similarly, Tables \ref{tab:objgap_n_30_N_500}--\ref{tab:preparetime_n_30_N_500} provide the performance of various scenario reduction approaches when $N=500$.
We observe similar trends as $N=100$ and $N=1000$ (see details in Appendix \ref{app:computational_performance}).
Specifically, HO outperforms the existing approaches for any $M$, yielding the lowest approximation error.
When $M$ is small, using method (ii) or method (iv) to estimate $\lambda$ can yield a lower approximation error, while as $M$ grows, estimation method (i) shows better performance.
In addition, HO using $M$ scenarios takes significantly shorter computational time than the SAA model using $N$ scenarios.
Note that the ``Local Search" approach requires extensive preparation time to select $M$ scenarios, with the time becoming significantly longer when $N$ is large.
This is because it evaluates the model's performance with respect to each of $N$ scenarios iteratively during the selection process.
Different from the ``Local Search" approach, our HO approach utilizes the partial distributional information calculated from the $N$ scenarios, thereby taking a shorter preparation time and maintaining its efficiency even when $N$ is large.
Moreover, our HO approach achieves a lower approximation error than other approaches by retaining information about the reduced scenarios.

\vspace{-2mm}
\begin{table}[htbp]
  \begin{minipage}{0.5\textwidth}
    \centering
    \scriptsize
    \caption{Approximation Error (\%) When $N=500$} 
    \scalebox{0.95}{
    \begin{tabular}{cccccc}
    \toprule
    $M$ & \textbf{MAD\_$\sqrt{M_0}$} & \textbf{MAD\_Gap} & \textbf{MAD\_Cross} & \textbf{Random} & \textbf{ \tabincell{c}{Local \\ Search} } \\
    \midrule
    10    & 4.49  & 4.49  & 13.00  & 196.36  & 380.91  \\
    20    & 4.48  & 4.47  & 10.69  & 68.51  & 230.39  \\
    30    & 4.48  & 4.46  & 5.39  & 27.86  & 105.46  \\
    40    & 4.47  & 4.41  & 4.04  & 13.95  & 54.16  \\
    50    & 4.47  & 4.32  & 3.39  & 9.87  & 36.57  \\
    \bottomrule
    \end{tabular}
	}
	\label{tab:objgap_n_30_N_500}
  \end{minipage}
  %
  \begin{minipage}{0.5\textwidth}
    \centering
    \scriptsize
    \caption{Computational Time (s) When $N=500$} 
    \scalebox{0.95}{
    \begin{tabular}{cccccc}
    \toprule
    $M$ & \textbf{MAD\_$\sqrt{M_0}$} & \textbf{MAD\_Gap} & \textbf{MAD\_Cross} & \textbf{Random} & \textbf{ \tabincell{c}{Local \\ Search} } \\
    \midrule
    10    & 372.13  & 346.85  & 338.93  & 11.25  & 12.22  \\
    20    & 483.66  & 444.19  & 433.41  & 31.98  & 28.17  \\
    30    & 594.51  & 544.17  & 531.64  & 84.59  & 74.38  \\
    40    & 696.43  & 659.24  & 639.52  & 159.69  & 142.65  \\
    50    & 785.40  & 784.21  & 760.87  & 229.64  & 203.51  \\
    \bottomrule
    \end{tabular}%
    }
  \label{tab:time_n_30_N_500}%
  \end{minipage}
\end{table}

\vspace{-6mm}

\begin{table}[htbp]
    \centering
    \scriptsize
    \caption{Preparation Time (s) When $N=500$} 
    \scalebox{0.95}{
    \begin{tabular}{cccccc}
    \toprule
    $M$ & \textbf{MAD\_$\sqrt{M_0}$} & \textbf{MAD\_Gap} & \textbf{MAD\_Cross} & \textbf{Random} & \textbf{ \tabincell{c}{Local \\ Search} } \\
    \midrule
    10    & 0     & 3,515.04  & 10,311.50  & 0     & 3,600  \\
    20    & 0     & 0     & 0     & 0     & 3,600  \\
    30    & 0     & 0     & 0     & 0     & 3,600  \\
    40    & 0     & 0     & 0     & 0     & 3,600  \\
    50    & 0     & 0     & 0     & 0     & 3,600  \\
    \bottomrule
    \end{tabular}%
	}
	\label{tab:preparetime_n_30_N_500}
\end{table}

\vspace{-6mm}

\section{Conclusion} \label{sec: Conclusion}

Decision-makers often face significant future uncertainties in their decision-making processes, compelling them to address problems under uncertainty.
To solve such problems, stochastic programming is a prominent approach to optimizing the expected performance under a given probability distribution.
However, such a distribution is rarely known to decision-makers in practice.
Extensive studies use historical data to approximate it with the empirical distribution, leading to the well-known SAA approach.
This approach offers strong performance guarantees, demonstrating asymptotic optimality (Proposition \ref{prop: converge}) and providing a confidence interval that includes the expectation under the true distribution (Proposition \ref{prop: interval}).
Despite its success when the sample size $N$ is large,  the SAA's performance may be poor when $N$ is limited because it solely relies on data, which may not approximate the true distribution.
More importantly, determining what qualifies as a ``large'' $N$ may be challenging in practice depending on the uncertain parameters and the model's dimensionality.
Besides data, one may also have partial distributional information about the uncertainty (e.g., moment information) to help them obtain a reliable solution.
Moment-based DRO is a popular approach that utilizes such information.
Unlike the SAA approach, it performs well when $N$ is limited and its model size does not depend on $N$.
However, when $N$ becomes large, its advantages may diminish and it may even provide a conservative solution.
Moreover, determining what qualifies as a ``small'' $N$ may also be challenging.
Therefore, we harmonize the SAA and DRO approaches to maintain the benefits of both of them by integrating data and partial distributional information, leading to a novel approach denoted by HO (see Model \eqref{model_harmonizing}), which works well for any data size without assessing the data size to be large or small.

In HO, the weights for \textit{data} (i.e., $1-\lambda$) and \textit{information} (i.e., $\lambda$) are adaptively adjusted based on $N$.
We achieve this by setting $\lambda=C/\sqrt{N}$, where $C$ is a predetermined fixed constant that one can easily identify (Section \ref{sec: estimate lambda}).
When $N$ is small, $\lambda$ remains large to amplify the influence of information and mitigate the impact of data.
In contrast, when $N$ is large, $\lambda$ decreases to shift the focus to data.
We explain this intuition from an alternative perspective by reformulating the HO model into a DRO model, whose ambiguity set shrinks as $\lambda$ decreases due to the growth of $N$ (Propositions \ref{prop: reform_to_dro} and \ref{prop: ambiguity_set_size}).
Our HO approach exhibits impressive performance guarantees.
In addition to providing a finite-sample performance guarantee (Proposition \ref{prop: non_asymptotic}),
it is also provably asymptotically optimal under mild conditions (Proposition \ref{prop: harmonizing_converge}) and delivers performance guarantees when $\lambda$ is in a $1/\sqrt{N}$-rate (Proposition \ref{prop: harmonizing_bias}), comparable to the Wasserstein-based DRO \citep{gao2023finite}.
More importantly, it can be reformulated into tractable forms easily solved by commercial solvers (Theorem \ref{theorem: reformulation} and Propositions \ref{prop: reform_moment} and \ref{prop: reform_mad}), thereby facilitating significant practical applications.


We further show the applicability and strength of HO in scenario reduction for stochastic programming by incorporating partial distributional information from initial samples.
Compared to the existing scenario reduction approach by \cite{rujeerapaiboon2022scenario}, which struggles with complexities from a large number of initial samples, HO remains effective for any number of initial samples by retaining information of dropped scenarios (Section \ref{sec: Scenario Reduction}).
It offers decision-makers a new approach to reducing the number of scenarios to consider, simplifying decision-making under uncertainty.
We further demonstrate the effectiveness of our HO approach in solving mean-risk portfolio optimization and lot sizing problems.
Numerical results show that HO significantly outperforms the Wasserstein-based DRO in out-of-sample performance (Section \ref{sec: Portfolio Optimization}).
In addition, it dominates the existing scenario reduction approach, achieving rapid completion and low approximation error (all within 4.5\% and some within 1\%), even when the number of scenarios is significantly reduced (Section \ref{sec: lot sizing}). 
Finally, this research can be extended in various directions.
For example, regarding the hyperparameter $C$ for computing the weight parameter $\lambda = C/\sqrt{N}$, it would be intriguing to investigate whether $C$ could be expressed as a function of factors related to uncertainties and the studied problem, which may help better determine the value of $C$. 
It would also be interesting to consider using the Wasserstein-based ambiguity set to represent the partial distributional information in our HO approach.
We leave them for further research.



{\SingleSpacedXI
\bibliographystyle{apalike}
\bibliography{Reference}
}

\newpage

\begin{APPENDICES}

\SingleSpacedXI
\small

\setcounter{table}{0}
\renewcommand{\thetable}{\Alph{section}\arabic{table}}
\setcounter{page}{1}

\section{Notations} \label{sec: notations}

\begin{table}[!htb]
\begin{center}
\caption{Summary of Key Notations}\label{tab:notations}
\centering
\renewcommand{\arraystretch}{1}
{\scriptsize
\begin{tabular}{ c  p{375pt} }
\toprule
{\bf Notation} &  {\bf Description}\\
\midrule

$\textbf{Parameters}$:\\
$N$ & Sample size \\
$M$ & Number of remaining scenarios after scenario reduction \\
$m$ & Dimension of the random vector $\boldsymbol{\xi}$ \\
$n$ & Dimension of the vector of decision variables $\mathbf{x}$ \\
$\lambda$ & Weight of information \\
$\boldsymbol{\mu}_0$, $\boldsymbol{\Sigma}_0$ & Mean value and covariance matrix of the uncertainty under the empirical distribution, respectively \\

$\textbf{Sets}$: \\
$\mathcal{X}$ & Feasibility set of decision variables \\
$\mathcal{S}$ & Uncertainty set \\
$\mathcal{S}_0(M)$ & Set of sample sets, each with a size $M$ \\
$\mathcal{D}$ & Distributional ambiguity set \\
$\mathcal{D}_{\textup T}$ & Distributional ambiguity set with mean and covariance information \\
$\mathcal{D}_{\textup D}$ & Distributional ambiguity set with mean absolute deviation information \\
$\mathcal{D}_{\textup H}(\lambda)$ & Distributional ambiguity set of the combined distribution with the weight $\lambda$ \\
$\mathcal{D}_0(\mathcal{S})$ & Set of all distributions on $\mathcal{S}$ \\

$\textbf{Distributions}$:\\
$\mathbb{P}$ & True distribution of $\boldsymbol{\xi}$ \\
$\mathbb{P}_0$ & Empirical distribution of $\boldsymbol{\xi}$ based on all $N$ samples \\
$\tilde{\mathbb{P}}_0 (\tilde{\mathcal{S}})$ & Empirical distribution of $\boldsymbol{\xi}$ based on a reduced sample set $\tilde{\mathcal{S}}$  \\
$\mathbb{P}_{\textup H}$ & Combined distribution of $\mathbb{P}_0$ and the distribution in a distributional ambiguity set \\

$\textbf{Functions}$:\\
$f(\mathbf{x}, \boldsymbol{\xi})$ & General function returning a real number \\
$F ( \mathbf{x} )$ & Objective function of the original stochastic model, i.e., $F ( \mathbf{x} ) = \mathbb{E}_{\mathbb{P}} [ f ( \mathbf{x}, \boldsymbol{\xi} ) ]$\\
$F_N ( \mathbf{x} )$ & Objective function of the SAA model with $N$ samples, i.e., $F ( \mathbf{x} ) = \mathbb{E}_{\mathbb{P}_0} [ f ( \mathbf{x}, \boldsymbol{\xi} ) ]$ \\
$F_{\lambda}( \mathbf{x} )$ & Objective function of the HO model with weight $\lambda$ \\
$\textbf{Optimal objective values}$:\\
$V^*$ & Optimal value of the original stochastic model \\
$V_N$ & Optimal value of the SAA model with $N$ samples \\
$\Gamma(\lambda) $ & Optimal value of the HO model with weight $\lambda$ \\

$\textbf{Abbreviations}$:\\
SAA & Sample average approximation \\
DRO  & Distributionally robust optimization \\
HO & Harmonizing optimization \\
MAD & Mean absolute deviation \\
PSD & Positive semi-definite \\
\bottomrule
\end{tabular}
}	 
\end{center}
\end{table}

For any integer $N \geq 1$, we use $[N] = \{1, \ldots, N\}$ to denote the set of running indices from $1$ to $N$.
We let $[\underline{a}, \overline{a}]_{\mathbb{Z}}$ denote the set of all integers between any two nonnegative integers $\underline{a}$ and $\overline{a}$; that is, $[\underline{a}, \overline{a}]_{\mathbb{Z}} = \{\underline{a}, \underline{a} + 1, \ldots, \overline{a}\}$ if $\underline{a} \leq \overline{a}$, and $[\underline{a}, \overline{a}]_{\mathbb{Z}} = \emptyset$ if $\underline{a} > \overline{a}$.
We denote scalar values, column vectors, and matrices by non-bold symbols, e.g., $\lambda$, lowercase bold symbols, e.g., $\mathbf{x} = (x_1, \ldots, x_n)^{\top}$, and uppercase characteristics, e.g., $\boldsymbol{\Sigma}$.
If a matrix $\boldsymbol{\Sigma}$ is positive semi-definite (PSD), then we use $\boldsymbol{\Sigma} \succeq 0$.
For multiple matrices or vectors with compatible sizes, we use square brackets to join them together, e.g., $\begin{bmatrix} \mathbf{A} \ \mathbf{B} \end{bmatrix}$ or $\begin{bmatrix} \mathbf{A} \\ \mathbf{B} \end{bmatrix}$.
For a proper cone $\mathcal{K}$ (a closed, convex, and pointed cone with nonempty interior), we let $\mathcal{K}^*$ denote the dual cone of a proper cone $\mathcal{K}$.
We let $\mathbb{S}_+^m \subseteq \mathbb{R}^{m \times m}$ denote the set of all PSD matrices in $\mathbb{R}^{m \times m}$. 
We use $\mathcal{D}_0(\mathbb{R}^m)$ to denote the set of all probability distributions on $\mathbb{R}^m$.
If $\mathbb{P} \in \mathcal{D}_0(\mathbb{R}^m \times \mathbb{R}^h)$ is a joint probability distribution of two random vectors $\boldsymbol{\xi} \in \mathbb{R}^m$ and $\mathbf{u} \in \mathbb{R}^h$, then $\Pi_{\boldsymbol{\xi}} \mathbb{P} \in \mathcal{D}_0(\mathbb{R}^m)$ denotes the marginal distribution of $\boldsymbol{\xi}$ under $\mathbb{P}$.
We extend this definition to any ambiguity set $\mathcal{D} \subseteq \mathcal{D}_0(\mathbb{R}^m \times \mathbb{R}^h)$ by setting $\Pi_{\boldsymbol{\xi}} \mathcal{D} = \cup_{\mathbb{P} \in \mathcal{D}} \{ \Pi_{\boldsymbol{\xi}} \mathbb{P} \}$.
We let $\mathbf{0}$ and $\mathbf{1}$ denote the vectors with all entries being $0$ and $1$, respectively, and $\mathbf{I}$ denote the identity matrix.
We use ``$\bullet$" to denote the inner product defined by $\mathbf{A} \bullet \mathbf{B} = \sum_{i,j}A_{ij}B_{ij}$, where $A_{ij}$ (resp. $B_{ij}$) denotes the entry of $\mathbf{A}$ (resp. $\mathbf{B}$) in row $i$ and column $j$.
We use $\xrightarrow{\mathcal{D}}$ to denote convergence in distribution.
We use $\mathcal{N}(\mu, \sigma)$ to denote the normal distribution with mean $\mu$ and standard deviation $\sigma$, and $\mathcal{U}(\underline{b},\overline{b})$ to denote the uniform distribution with lower bound $\underline{b}$ and upper bound $\overline{b}$.

\section{Supplement to Section \ref{sec: Existing Models}}

\subsection{Proof of Proposition \ref{prop: interval}}
Given any $\mathbf{x} \in \mathcal{X}$, the sample average estimator $F_N ( \mathbf{x} )$ of $F(\mathbf{x})$ is unbiased because the samples $\tilde{\boldsymbol{\xi}}_1, \ldots, \tilde{\boldsymbol{\xi}}_N$ are iid. 
For any $\mathbf{x} \in \mathcal{X}$, by the central limit theorem, we have $\sqrt{N}(F_N ( \mathbf{x} ) - F(\mathbf{x}))$ converges in distribution to a normal distribution with mean $0$ and variance $\sigma^2(\mathbf{x})$, $\mathcal{N}(0, \sigma^2(\mathbf{x}) )$;
that is, $F_N ( \mathbf{x} ) \sim \mathcal{N}(F(\mathbf{x}), \sigma^2(\mathbf{x})/N )$ asymptotically for large $N$. 
Using the $N$ iid samples, we can compute $\hat{\sigma}^2(\mathbf{x})$ as the sample variance estimator of $\sigma^2(\mathbf{x})$.
Thus, for any $\mathbf{x} \in \mathcal{X}$, we have an approximate $100 (1-\alpha) \%$ confidence interval for $\sqrt{N}(F_N ( \mathbf{x} ) - F(\mathbf{x}))$ as 
$[-z_{\frac{\alpha}{2}} \hat{\sigma}(\mathbf{x}), z_{\frac{\alpha}{2}} \hat{\sigma}(\mathbf{x})]$, which completes the proof. 
\Halmos 

\subsection{Special Cases of Moment-Based Ambiguity Sets} \label{sec: Special Cases}

We can recognize several popular moment-based ambiguity sets in the literature as special cases of the ambiguity set $\mathcal{D}$ in \eqref{ambiguity_set}.
For example, with given support $\mathcal{S} \subseteq \mathbb{R}^m$, mean $\boldsymbol{\mu} \in \mathbb{R}^m$, covariance matrix $\boldsymbol{\Sigma} \in \mathbb{R}^{m \times m}$, $\gamma_1 \geq 0$, $\gamma_2 \geq 1$, and $ \boldsymbol{\Sigma} \succ 0 $, we can set 
\begin{align*}
\mathcal{D}_{\textup T}
=\left\{ \mathbb{P} \in \mathcal{D}_0 \left( \mathbb{R}^m \times \mathbb{R}^{\left( m+1 \right) \times m} \right) \middle|
\begin{array}{l}
\mathbb{E}_{\mathbb{P}} \left[ 
\begin{bmatrix} 
    & 1
    \hspace{0.15in} 
    \mathbf{0}^{\top} \\ 
    & \hspace{-2.5mm} \mathbf{0}
    \hspace{0.15in} 
    \mathbf{I}
\end{bmatrix}
\begin{bmatrix} 
    \mathbf{u}_1^{\top} \\
    \mathbf{U}_2
\end{bmatrix}
-
\begin{bmatrix} 
    1 \\
    \mathbf{0}
\end{bmatrix}
\boldsymbol{\xi}^{\top}
\right] 
= 
\begin{bmatrix} 
    \mathbf{0}^{\top}& \\
    \gamma_2 \boldsymbol{\Sigma}
\end{bmatrix} 
\vspace{5mm}\\
\mathbb{P} \left(
\boldsymbol{\xi} \in \mathcal{S}
\right) = 1
\vspace{5mm}\\
\mathbb{P} \left(
    \begin{bmatrix} 
    \boldsymbol{\Sigma}
    & \hspace{0.1 in} 
    \left(
    \begin{bmatrix} 
        1 \hspace{0.15in} \mathbf{0}^{\top}
    \end{bmatrix}
    \begin{bmatrix} 
        \mathbf{u}_1^{\top} \\
        \mathbf{U}_2
    \end{bmatrix} 
    -
    \boldsymbol{\mu}^{\top}
    \right)^{\top}
    \\ 
    \left(
    \begin{bmatrix} 
        1 \hspace{0.15in} \mathbf{0}^{\top}
    \end{bmatrix}
    \begin{bmatrix} 
        \mathbf{u}_1^{\top} \\
        \mathbf{U}_2
    \end{bmatrix} 
    -
    \boldsymbol{\mu}^{\top}
    \right)
    & \gamma_1
    \end{bmatrix} \succeq 0
    \right)
= 1
\vspace{5mm}\\
\mathbb{P} \left(
    \begin{bmatrix} 
    1
    & \hspace{0.4in}  \left( \boldsymbol{\xi} - \boldsymbol{\mu} \right)^{\top} 
    \vspace{4mm} \\ 
    \left( \boldsymbol{\xi} - \boldsymbol{\mu} \right)
    & 
    \hspace{7mm}
    \begin{bmatrix} 
        \mathbf{0} \hspace{0.15in} \mathbf{I}
    \end{bmatrix}
    \begin{bmatrix} 
        \mathbf{u}_1^{\top} \\
        \mathbf{U}_2
    \end{bmatrix} 
    \end{bmatrix} \succeq 0
    \right)
= 1
\end{array}
\right\},
\end{align*}
where the auxiliary random parameters $\mathbf{u}_1 \in \mathbb{R}^m$ and $\mathbf{U}_2 \in \mathbb{R}^{m \times m}$.
It follows that
\begin{align*}
    \mathbb{P}_{\boldsymbol{\xi}} \in \Pi_{\boldsymbol{\xi}} \mathcal{D}_{\textup T} = 
    \left\{ 
    \mathbb{P}_{\boldsymbol{\xi}} \in \mathcal{D}_0 \left( \mathbb{R}^m \right) \ \middle| \
    \begin{array}{l}
        \mathbb{P}_{\boldsymbol{\xi}} \left( \boldsymbol{\xi} \in \mathcal{S} \right) = 1 \\
        \left( \mathbb{E}_{\mathbb{P}_{\boldsymbol{\xi}}}\left[ \boldsymbol{\xi} \right] -\boldsymbol{\mu} \right)^{\top} \boldsymbol{\Sigma}^{-1} 
        \left( \mathbb{E}_{\mathbb{P}_{\boldsymbol{\xi}}}\left[ \boldsymbol{\xi} \right] - 
        \boldsymbol{\mu} 
        \right)
        \leq \gamma_1 \\
        \mathbb{E}_{\mathbb{P}_{\boldsymbol{\xi}}} \left[ \left( \boldsymbol{\xi} - \boldsymbol{\mu} \right) \left( \boldsymbol{\xi} - \boldsymbol{\mu} \right)^{\top} \right]  
        \preceq \gamma_2\boldsymbol{\Sigma}
    \end{array}
    \right\},
\end{align*}
which describes that the support of $\boldsymbol{\xi}$ is $\mathcal{S}$,
the mean of $\boldsymbol{\xi}$ lies in an ellipsoid of size $\gamma_1$ centered at $\boldsymbol{\mu}$, 
and the covariance of $\boldsymbol{\xi}$ is bounded from above by $\gamma_2 \boldsymbol{\Sigma}$.
In addition, with given support $\mathcal{S} \subseteq \mathbb{R}^m$, mean $\boldsymbol{\mu} \in \mathbb{R}^m$ and deviation $\boldsymbol{\delta} \in \mathbb{R}^m$, we can set 
\begin{align*}
\mathcal{D}_{\textup D}
=\left\{ \mathbb{P} \in \mathcal{D}_0 \left( \mathbb{R}^m \times \mathbb{R}^{m} \right) \ \middle| \
\begin{array}{l}
\mathbb{E}_{\mathbb{P}} \left[ \boldsymbol{\xi} \right] = \boldsymbol{\mu} \\
\mathbb{E}_{\mathbb{P}} \left[ \mathbf{u} \right] = \boldsymbol{\delta} \\
\mathbb{P} \left(
\boldsymbol{\xi} \in \mathcal{S}
\right) = 1 \\
\mathbb{P} \left(
   \mathbf{u} \geq \boldsymbol{\xi} - \boldsymbol{\mu}, \ \mathbf{u} \geq \boldsymbol{\mu} - \boldsymbol{\xi}
    \right)
= 1
\end{array}
\right\},
\end{align*}
where the auxiliary random vector $\mathbf{u} \in \mathbb{R}^{m}$. 
It follows that
\begin{align*}
    \mathbb{P}_{\boldsymbol{\xi}} \in \Pi_{\boldsymbol{\xi}} \mathcal{D}_{\textup D}
    = 
    \left\{ 
    \mathbb{P}_{\boldsymbol{\xi}} \in \mathcal{D}_0 \left( \mathbb{R}^m \right) \ \middle| \
    \begin{array}{l}
        \mathbb{P}_{\boldsymbol{\xi}} \left( \boldsymbol{\xi} \in \mathcal{S} \right) = 1 \\ \mathbb{E}_{\mathbb{P}_{\boldsymbol{\xi}}}\left[ \boldsymbol{\xi} \right] = \boldsymbol{\mu}\\
        \mathbb{E}_{\mathbb{P}_{\boldsymbol{\xi}}} \left[ \left| \boldsymbol{\xi} - \boldsymbol{\mu} \right| \right] \leq \boldsymbol{\delta}
    \end{array}
    \right\},
\end{align*}
which specifies the support, mean, and mean absolute deviation (MAD) information of $\boldsymbol{\xi}$.

\section{Supplement to Section \ref{sec: HO}}

\subsection{Proof of Proposition \ref{prop: reform_to_dro}}
We have
\begin{align}
& \min_{\mathbf{x} \in \mathcal{X}} \left\{ \left( 1 - \lambda \right) \mathbb{E}_{\mathbb{P}_0} \left[ f \left( \mathbf{x}, \boldsymbol{\xi} \right) \right] + \lambda \max_{\mathbb{P} \in \mathcal{D}} \mathbb{E}_{\mathbb{P}}[f(\mathbf{x}, \boldsymbol{\xi})] \right\} \nonumber \\
= & \min_{\mathbf{x} \in \mathcal{X}} \max_{\mathbb{P} \in \mathcal{D}} \Big\{  \left( 1 - \lambda \right) \mathbb{E}_{\mathbb{P}_0} \left[ f \left( \mathbf{x}, \boldsymbol{\xi} \right) \right] + \lambda \mathbb{E}_{\mathbb{P}}[f(\mathbf{x}, \boldsymbol{\xi})] \Big\} \nonumber \\
= & \min_{\mathbf{x} \in \mathcal{X}} \max_{\mathbb{P}_{\boldsymbol{\xi}} \in \Pi_{\boldsymbol{\xi}}\mathcal{D}} \left\{  \left( 1 - \lambda \right) \mathbb{E}_{\mathbb{P}_0} \left[ f \left( \mathbf{x}, \boldsymbol{\xi} \right) \right] + \lambda \mathbb{E}_{\mathbb{P}_{\boldsymbol{\xi}}}[f(\mathbf{x}, \boldsymbol{\xi})] \right\} \nonumber \\
= & \min_{\mathbf{x} \in \mathcal{X}} \max_{\mathbb{P}_{\boldsymbol{\xi}} \in \Pi_{\boldsymbol{\xi}}\mathcal{D}} \mathbb{E}_{\left( 1-\lambda \right) \mathbb{P}_0 + \lambda\mathbb{P}_{\boldsymbol{\xi}}} \left[ f \left( \boldsymbol{x},\boldsymbol{\xi} \right) \right] \nonumber \\
= & \min_{\boldsymbol{x} \in \mathcal{X}} \ \max_{\mathbb{P}_{\textup H} \in \mathcal{D}_{\textup H} \left( \lambda \right)} \ \mathbb{E}_{\mathbb{P}_{\textup H}} \left[ f \left( \boldsymbol{x},\boldsymbol{\xi} \right) \right], \nonumber
\end{align}
which completes the proof.
\Halmos 

\subsection{Proof of Proposition \ref{prop: ambiguity_set_size}}
For any $\mathbb{P}_{\textup H} = (1-\lambda_2)\mathbb{P}_0 + \lambda_2\mathbb{P}_{\boldsymbol{\xi}} \in \mathcal{D}_{\textup H}(\lambda_2)$, by the definition of $\mathcal{D}_{\textup H}(\lambda)$, we have $\mathbb{P}_{\boldsymbol{\xi}} \in \Pi_{\boldsymbol{\xi}}\mathcal{D}$.
We define $\mathbb{P}_{\textup H}^{\prime} = (1 - \lambda_2/\lambda_1) \mathbb{P}_0 + (\lambda_2/\lambda_1) \mathbb{P}_{\boldsymbol{\xi}}$.
Since $0 \leq \lambda_2 \leq \lambda_1$, we have $\lambda_2/\lambda_1 \in [0,1]$.
Note that $\Pi_{\boldsymbol{\xi}}\mathcal{D}$ is convex.
Therefore, if $\mathbb{P}_0 \in \Pi_{\boldsymbol{\xi}}\mathcal{D}$, then we have $\mathbb{P}_{\textup H}^{\prime} \in \Pi_{\boldsymbol{\xi}}\mathcal{D}$.
By the definition of $\mathcal{D}_{\textup H}(\lambda)$, we then have $\mathbb{P}_{\textup H} 
= (1-\lambda_2)\mathbb{P}_0 + \lambda_2\mathbb{P}_{\boldsymbol{\xi}}
= (1-\lambda_1)\mathbb{P}_0 + \lambda_1 \mathbb{P}_{\textup H}^{\prime} \in \mathcal{D}_{\textup H}(\lambda_1)$, indicating that $\mathcal{D}_{\textup H}(\lambda_2) \subseteq \mathcal{D}_{\textup H}(\lambda_1)$.
This completes the proof.
\Halmos 

\subsection{{Proof of Proposition \ref{prop: wasserstein_equivalence}}}
For any $x \in \mathcal{X}$, given any $\lambda \in [0, 1]$ and $r_{\textup H} \in \mathbb{R}_+$, we have
\begin{align*}
    \left( 1 - \lambda \right) \mathbb{E}_{\mathbb{P}_0} \left[ f \left( \mathbf{x}, \boldsymbol{\xi} \right) \right] + \lambda \max_{\mathbb{P} \in \mathcal{D}_{\textup W}(r_{\textup H})} \mathbb{E}_{\mathbb{P}}\left[ f \left( \mathbf{x}, \boldsymbol{\xi} \right) \right] & = \left( 1 - \lambda \right) \mathbb{E}_{\mathbb{P}_0} \left[ f \left( \mathbf{x}, \boldsymbol{\xi} \right) \right] + \lambda \left( \mathbb{E}_{\mathbb{P}_0} \left[ f \left( \mathbf{x}, \boldsymbol{\xi} \right) \right] + r_{\textup H} \text{lip} \left( f\left( \mathbf{x}, \cdot \right) \right) \right) \\
    & = \mathbb{E}_{\mathbb{P}_0} \left[ f \left( \mathbf{x}, \boldsymbol{\xi} \right) \right] + \lambda r_{\textup H} \text{lip} \left( f\left( \mathbf{x}, \cdot \right) \right) \\
    & = \max_{\mathbb{P} \in \mathcal{D}_{\textup W}(r_{\textup W})} \mathbb{E}_{\mathbb{P}}\left[ f \left( \mathbf{x}, \boldsymbol{\xi} \right) \right],
\end{align*}
where $r_{\textup W} = \lambda r_{\textup H}$, $\text{lip} ( f( \mathbf{x}, \cdot ))$ denotes the Lipschitz constant of $f( \mathbf{x}, \cdot )$, and the first and last equalities hold by Proposition 6.17 in \cite{kuhn2025dro}.
\Halmos 

\subsection{Proof of Proposition \ref{prop: non_asymptotic}}
For any $\mathbb{P}^{\prime} \in \Pi_{\boldsymbol{\xi}}\mathcal{D}$, let $\underline{\lambda}(\mathbb{P}^{\prime}) = \argmin \{\lambda \ | \ \mathbb{P}^{\prime} \in \mathcal{D}_{\textup H}(\lambda) \}$.
We have $\mathbb{P}^{\prime}$ is on the boundary of $\mathcal{D}_{\textup H}(\underline{\lambda}(\mathbb{P}^{\prime}))$, i.e., $\mathbb{P}^{\prime} \in \partial \mathcal{D}_{\textup H}(\underline{\lambda}(\mathbb{P}^{\prime}))$; 
otherwise, we can always find a $\lambda^{\prime}$ that is smaller than $\underline{\lambda}(\mathbb{P}^{\prime})$ and satisfies $\mathbb{P}^{\prime} \in \mathcal{D}_{\textup H}(\lambda^{\prime})$.
We define set $\mathcal{B}_{\epsilon}^{\textup G}(\boldsymbol{\mu}_0, \boldsymbol{\Sigma}_0) = \{ \mathbb{P}_{\textup H} \in \mathcal{D}_{\textup H}(1) \ | \ \mathcal{G}((\boldsymbol{\mu}_0, \boldsymbol{\Sigma}_0), (\boldsymbol{\mu}(\mathbb{P}_{\textup H}), \boldsymbol{\Sigma}(\mathbb{P}_{\textup H}))) \leq \epsilon \}$, which contains all the distributions in $\mathcal{D}_{\textup H}(1)$ whose mean-covariance pairs have a Gelbrich distance of at most $\epsilon$ from the pair $(\boldsymbol{\mu}_0, \boldsymbol{\Sigma}_0)$.
We define set $\mathcal{B}_{\epsilon}^{\textup W}(\mathbb{P}_0) = \{ \mathbb{P}_{\textup H} \in \mathcal{D}_{\textup H}(1) \ | \ \mathcal{W}_2(\mathbb{P}_0, \mathbb{P}_{\textup H}) \leq \epsilon \}$, which contains all the distributions in $\mathcal{D}_{\textup H}(1)$ that have a type-2 Wasserstein distance of at most $\epsilon$ from $\mathbb{P}_0$.

First, we show that $\mathcal{B}_{\epsilon}^{\textup G}(\boldsymbol{\mu}_0, \boldsymbol{\Sigma}_0) \subseteq \mathcal{D}_{\textup H}(\lambda^*)$ by contradiction.
Suppose there exists $\mathbb{P}^{\prime} \in \mathcal{B}_{\epsilon}^{\textup G}(\boldsymbol{\mu}_0, \boldsymbol{\Sigma}_0)$ such that $\mathbb{P}^{\prime} \notin \mathcal{D}_{\textup H}(\lambda^*)$.
By Proposition \ref{prop: ambiguity_set_size}, we have $\underline{\lambda}(\mathbb{P}^{\prime}) > \lambda^*$.
By the definition of $\mathcal{D}_{\textup H}(\underline{\lambda}(\mathbb{P}^{\prime}))$, we have $\mathbb{P}^{\prime} = (1-\underline{\lambda}(\mathbb{P}^{\prime})) \mathbb{P}_0 + \underline{\lambda}(\mathbb{P}^{\prime}) \overline{\mathbb{P}}$, where $\overline{\mathbb{P}} \in \Pi_{\boldsymbol{\xi}} \mathcal{D}$.
More precisely, we have $\overline{\mathbb{P}} \in \partial \Pi_{\boldsymbol{\xi}} \mathcal{D}$ because otherwise, i.e., $\overline{\mathbb{P}} \notin \partial \Pi_{\boldsymbol{\xi}} \mathcal{D}$, we can always find a $\lambda^{\prime}$ that is smaller than $\underline{\lambda}(\mathbb{P}^{\prime})$ and satisfies $\mathbb{P}^{\prime} \in \mathcal{D}_{\textup H}(\lambda^{\prime})$.
Given $\mathbb{P}_0, \mathbb{P}^{\prime} \in \Pi_{\boldsymbol{\xi}} \mathcal{D}$, we define a new distribution $\mathbb{P}_{\textup H}$ as a convex combination of $\mathbb{P}_0$ and $\mathbb{P}^{\prime}$:
\begin{align*}
\mathbb{P}_{\textup H} 
& = \left( 1- \frac{\lambda^*}{\underline{\lambda}(\mathbb{P}^{\prime})} \right) \mathbb{P}_0 + \frac{\lambda^*}{\underline{\lambda}(\mathbb{P}^{\prime})} \mathbb{P}^{\prime} \\
& = \left( 1- \frac{\lambda^*}{\underline{\lambda}(\mathbb{P}^{\prime})} \right) \mathbb{P}_0 + \frac{\lambda^*}{\underline{\lambda}(\mathbb{P}^{\prime})} \left( \left( 1-\underline{\lambda}(\mathbb{P}^{\prime})) \mathbb{P}_0 + \underline{\lambda}(\mathbb{P}^{\prime} \right) \overline{\mathbb{P}} \right) \\
& = \left( 1-\lambda^* \right) \mathbb{P}_0 + \lambda^* \overline{\mathbb{P}}.
\end{align*}
Since $\overline{\mathbb{P}} \in \partial \Pi_{\boldsymbol{\xi}} \mathcal{D}$, by the definition of $\mathcal{D}_{\textup H}(\lambda^*)$, we have $\mathbb{P}_{\textup H} \in \partial\mathcal{D}_{\textup H}(\lambda^*)$.
As a result, by the definition of $\lambda^*$ in \eqref{best_lambda}, we have 
\[\mathcal{G} ((\boldsymbol{\mu}_0, \boldsymbol{\Sigma}_0), (\boldsymbol{\mu} (\mathbb{P}_{\textup H}), \boldsymbol{\Sigma}(\mathbb{P}_{\textup H}))) > \epsilon.\]

\noindent
Note that Corollary 3 in \cite{nguyen2021mean} suggests that $\mathcal{B}_{\epsilon}^{\textup G}(\boldsymbol{\mu}_0, \boldsymbol{\Sigma}_0)$ is convex. 
Thus, as $\mathbb{P}_0, \mathbb{P}^{\prime} \in \mathcal{B}_{\epsilon}^{\textup G}(\boldsymbol{\mu}_0, \boldsymbol{\Sigma}_0)$, we have $\mathbb{P}_{\textup H} \in \mathcal{B}_{\epsilon}^{\textup G}(\boldsymbol{\mu}_0, \boldsymbol{\Sigma}_0)$, which shows that $\mathcal{G} ((\boldsymbol{\mu}_0, \boldsymbol{\Sigma}_0), (\boldsymbol{\mu} (\mathbb{P}_{\textup H}), \boldsymbol{\Sigma}(\mathbb{P}_{\textup H}))) \leq \epsilon$, leading to the contradiction.
Therefore, for any $\mathbb{P}^{\prime} \in \mathcal{B}_{\epsilon}^{\textup G}(\boldsymbol{\mu}_0, \boldsymbol{\Sigma}_0)$, we have $\mathbb{P}^{\prime} \in \mathcal{D}_{\textup H}(\lambda^*)$, indicating that $\mathcal{B}_{\epsilon}^{\textup G}(\boldsymbol{\mu}_0, \boldsymbol{\Sigma}_0) \subseteq \mathcal{D}_{\textup H}(\lambda^*)$.

Second, we show that $\mathcal{B}_{\epsilon}^{\textup W}(\mathbb{P}_0) \subseteq
\mathcal{B}_{\epsilon}^{\textup G}(\boldsymbol{\mu}_0, \boldsymbol{\Sigma}_0)$.
For any $\mathbb{P}^{\prime} \in \mathcal{B}_{\epsilon}^{\textup W}(\mathbb{P}_0)$, we have $\mathcal{W}_2(\mathbb{P}_0, \mathbb{P}^{\prime}) \leq \epsilon$.
By Theorem 1 in \cite{nguyen2021mean}, we have
$\mathcal{G} ( ( \boldsymbol{\mu}_0, \boldsymbol{\Sigma}_0 ), ( \boldsymbol{\mu}(\mathbb{P}^{\prime}), \boldsymbol{\Sigma}(\mathbb{P}^{\prime}) ) ) \leq
\mathcal{W}_2(\mathbb{P}_0, \mathbb{P}^{\prime}) \leq \epsilon$, indicating that $\mathbb{P}^{\prime} \in \mathcal{B}_{\epsilon}^{\textup G}(\boldsymbol{\mu}_0, \boldsymbol{\Sigma}_0)$.
Therefore, we have $\mathcal{B}_{\epsilon}^{\textup W}(\mathbb{P}_0) \subseteq
\mathcal{B}_{\epsilon}^{\textup G}(\boldsymbol{\mu}_0, \boldsymbol{\Sigma}_0)$.
Consequently, we have
\begin{align}
    \mathcal{B}_{\epsilon}^{\textup W}(\mathbb{P}_0) 
    \subseteq
    \mathcal{B}_{\epsilon}^{\textup G}(\boldsymbol{\mu}_0, \boldsymbol{\Sigma}_0)
    \subseteq
    \mathcal{D}_{\textup H}(\lambda^*). \label{set_cover}
\end{align}

Finally, by Theorem 2 in \cite{fournier2015rate}, we have $\mathcal{P}(\mathbb{P} \in \mathcal{B}_{\epsilon}^{\textup W}(\mathbb{P}_0)) \geq 1-\beta$.
By \eqref{set_cover}, we further have
$\mathcal{P}( \mathbb{\mathbb{P}} \in \mathcal{D}_{\textup H}(\lambda^*) ) \geq \mathcal{P}(\mathbb{P} \in \mathcal{B}_{\epsilon}^{\textup W}(\mathbb{P}_0)) \geq 1-\beta$.
Moreover, by Proposition \ref{prop: ambiguity_set_size}, we have $\mathcal{D}_{\textup H}(\lambda^*) \subseteq \mathcal{D}_{\textup H}(\lambda)$ for any $\lambda \geq \lambda^*$.
It follows that $\mathcal{P} ( \mathbb{P} \in \mathcal{D}_{\textup H}(\lambda) ) \geq \mathcal{P} ( \mathbb{P} \in \mathcal{D}_{\textup H}(\lambda^*) ) $ for any $\lambda \geq \lambda^*$.
Furthermore, note that given any $\lambda \in [0,1]$, the definition of $\mathcal{D}_{\textup H}(\lambda)$ implies that $\mathbb{P} \in \mathcal{D}_{\textup H}(\lambda)$ is equivalent to the existence of a $\mathbb{P}_{\textup M} \in \Pi_{\boldsymbol{\xi}} \mathcal{D}$ such that $\mathbb{P} = (1-\lambda)\mathbb{P}_0 + \lambda \mathbb{P}_{\textup M}$.
By the definition of a mixture distribution, $\lambda$ also represents the probability that the mixture distribution $\mathbb{P}$ is $\mathbb{P}_{\textup M}$.
That is, given any $\lambda \in [0,1]$, we have $\mathcal{P}( \mathbb{P} = (1-\lambda) \mathbb{P}_0 + \lambda \mathbb{P} ) = \lambda$.
Since $\mathbb{P} \in \Pi_{\boldsymbol{\xi}} \mathcal{D}$, we have $\mathcal{P}(\mathbb{P} \in \mathcal{D}_{\textup H}(\lambda)) \geq \mathcal{P}( \mathbb{P} = (1-\lambda) \mathbb{P}_0 + \lambda \mathbb{P} ) = \lambda \geq 1-\beta$ for any $\lambda \in [1-\beta, 1]$.
\Halmos 


\subsection{Bisection Search Algorithm} \label{app: Bisection Search Algorithm}

Let $g(\lambda) = \min_{\mathbb{P}_{\textup H} \in \partial\mathcal{D}_{\textup H}(\lambda)} \mathcal{G}( (\boldsymbol{\mu}_0, \boldsymbol{\Sigma}_0), (\boldsymbol{\mu}(\mathbb{P}_{\textup H}), \boldsymbol{\Sigma}(\mathbb{P}_{\textup H})))$ for any $\lambda \in [0,1]$.
Algorithm \ref{alg: bisection} presents the details of the bisection search algorithm used for determining $\lambda^*$.

\begin{algorithm}[!htb]
\SingleSpacedXI
\footnotesize
\caption{Bisection Search Algorithm} \label{alg: bisection}
\begin{algorithmic}[1]
\Require
$\underline{\lambda}=0$, $\overline{\lambda} = 1$, $\Delta = 10^{-6}$, $\epsilon$.
\Do
\State Set $\hat{\lambda} = (\underline{\lambda} + \overline{\lambda})/2$. 
\If{$g(\hat{\lambda}) \geq \epsilon$}
\State Set $\overline{\lambda} = \hat{\lambda}$.
\Else
\State Set $\underline{\lambda} = \hat{\lambda}$.
\EndIf
\doWhile{$\overline{\lambda} - \underline{\lambda} \geq \Delta$}
\Ensure
$\lambda^* = \hat{\lambda}$.
\end{algorithmic}
\end{algorithm}

\subsection{Proof of Proposition \ref{prop: harmonizing_converge}}
For any $\mathbf{x} \in \mathcal{X}$ and $\lambda \in [0,1]$, we have
\begin{align}
\left| F_{\lambda} \left( \mathbf{x} \right) - F \left( \mathbf{x} \right) \right| 
& \leq \left| F_{\lambda} \left( \mathbf{x} \right) - F_N \left( \mathbf{x} \right) \right| + \left| F_N \left( \mathbf{x} \right) - F \left( \mathbf{x} \right) \right| \nonumber \\
& = \lambda \left| \max_{\mathbb{P} \in \mathcal{D}} \mathbb{E}_{\mathbb{P}}\left[ f \left( \mathbf{x}, \boldsymbol{\xi} \right) \right] - \mathbb{E}_{\mathbb{P}_0} \left[ f \left( \mathbf{x}, \boldsymbol{\xi} \right) \right] \right| + \left| F_N \left( \mathbf{x} \right) - F \left( \mathbf{x} \right) \right|. \label{ineq: proof_harmonizing_converge_1}
\end{align} 
Since $\mathbb{E}_{\mathbb{P}} [ |f ( \mathbf{x}, \boldsymbol{\xi} )| ] < \infty$ for any $\mathbf{x} \in \mathcal{X}$ with any given $\mathbb{P}$, we have $| \max_{\mathbb{P} \in \mathcal{D}} \mathbb{E}_{\mathbb{P}} [ f(\mathbf{x}, \boldsymbol{\xi}) ] - \mathbb{E}_{\mathbb{P}_0} [ f ( \mathbf{x}, \boldsymbol{\xi} ) ]| < \infty$ for any $\mathbf{x} \in \mathcal{X}$.
Thus, for any $\mathbf{x} \in \mathcal{X}$ and $\epsilon_1 > 0$, there exists $N_1(\mathbf{x}, \epsilon_1) = (C | \max_{\mathbb{P} \in \mathcal{D}} \mathbb{E}_{\mathbb{P}} [ f(\mathbf{x}, \boldsymbol{\xi}) ] - \mathbb{E}_{\mathbb{P}_0} [ f ( \mathbf{x}, \boldsymbol{\xi} ) ]| / \epsilon_1)^2$ such that for any $N > N_1(\mathbf{x}, \epsilon_1)$,
\begin{align}
    \lambda \left| \max_{\mathbb{P} \in \mathcal{D}} \mathbb{E}_{\mathbb{P}}\left[ f \left( \mathbf{x}, \boldsymbol{\xi} \right) \right] - \mathbb{E}_{\mathbb{P}_0} \left[ f \left( \mathbf{x}, \boldsymbol{\xi} \right) \right] \right| 
    & = 
    \frac{C}{\sqrt{N}} \left| \max_{\mathbb{P} \in \mathcal{D}} \mathbb{E}_{\mathbb{P}}\left[ f \left( \mathbf{x}, \boldsymbol{\xi} \right) \right] - \mathbb{E}_{\mathbb{P}_0} \left[ f \left( \mathbf{x}, \boldsymbol{\xi} \right) \right] \right| \nonumber \\
    & < \frac{C}{\sqrt{N_1(\mathbf{x}, \epsilon_1)}} \left| \max_{\mathbb{P} \in \mathcal{D}} \mathbb{E}_{\mathbb{P}}\left[ f \left( \mathbf{x}, \boldsymbol{\xi} \right) \right] - \mathbb{E}_{\mathbb{P}_0} \left[ f \left( \mathbf{x}, \boldsymbol{\xi} \right) \right] \right|
    = \epsilon_1. \label{ineq: proof_harmonizing_converge_2}
\end{align}
Furthermore, $F_N(\mathbf{x})$ converges to $F(\mathbf{x})$ w.p. 1 as $N \rightarrow \infty$, uniformly on $\mathcal{X}$.
That is, for any $\mathbf{x} \in \mathcal{X}$ and $\epsilon_2 > 0$, there exists $N_2(\mathbf{x},\epsilon_2)$ such that
\begin{align}
  \left| F_N \left( \mathbf{x} \right) - F \left( \mathbf{x} \right) \right| < \epsilon_2, \ \forall \, N > N_2(\mathbf{x},\epsilon_2). \label{ineq: proof_harmonizing_converge_3}
\end{align}
By \eqref{ineq: proof_harmonizing_converge_1}--\eqref{ineq: proof_harmonizing_converge_3}, we have for any $\mathbf{x} \in \mathcal{X}$, $\epsilon_1, \epsilon_2 > 0$, there exists $N_3(\mathbf{x},\epsilon_1,\epsilon_2) = \max\{N_1(\mathbf{x},\epsilon_1), N_2(\mathbf{x},\epsilon_2)\}$ such that
\begin{align}
    \left| F_{\lambda} \left( \mathbf{x} \right) - F \left( \mathbf{x} \right) \right| < \epsilon_1 + \epsilon_2, \ \forall \, N > N_3(\mathbf{x},\epsilon_1,\epsilon_2). \label{ineq: proof_harmonizing_converge_4}
\end{align}
Additionally, we have
\begin{align*}
    \Gamma(\lambda) - V^* 
    & = \min_{\mathbf{x}_1 \in \mathcal{X}} F_{\lambda} \left( \mathbf{x}_1 \right) - \min_{\mathbf{x}_2 \in \mathcal{X}} F \left( \mathbf{x}_2 \right) \nonumber = \min_{\mathbf{x}_1 \in \mathcal{X}} F_{\lambda} \left( \mathbf{x}_1 \right) - F \left( \mathbf{x}_2^* \right) \leq F_{\lambda} \left( \mathbf{x}_2^* \right) - F \left( \mathbf{x}_2^* \right)  \\
    & \leq \max_{\mathbf{x} \in \mathcal{X}} \left\{ F_{\lambda} \left( \mathbf{x} \right) - F \left( \mathbf{x} \right) \right\} \leq \max_{\mathbf{x} \in \mathcal{X}} \left\{ \left| F_{\lambda} \left( \mathbf{x} \right) - F \left( \mathbf{x} \right) \right| \right\},
\end{align*}
where $\mathbf{x}_2^*$ is the optimal solution of $\min_{\mathbf{x}_2 \in \mathcal{X}} F( \mathbf{x}_2 )$.
Similarly, we can have $V^* - \Gamma(\lambda) \leq \max_{\mathbf{x} \in \mathcal{X}} \{ | F_{\lambda} ( \mathbf{x} ) - F ( \mathbf{x} ) | \}$. 
Let $\mathbf{x}^* = \argmax_{\mathbf{x} \in \mathcal{X}} \{ | F_{\lambda} ( \mathbf{x} ) - F ( \mathbf{x} ) | \}$. 
For any $\epsilon_1, \epsilon_2 > 0$, by \eqref{ineq: proof_harmonizing_converge_4}, we have
\begin{align*}
    \left| \Gamma(\lambda) - V^* \right| \leq \max_{\mathbf{x} \in \mathcal{X}} \left\{ \left| F_{\lambda} \left( \mathbf{x} \right) - F \left( \mathbf{x} \right) \right| \right\} = \left| F_{\lambda} \left( \mathbf{x}^* \right) - F \left( \mathbf{x}^* \right) \right| < \epsilon_1 + \epsilon_2, \ \forall \, N > N_3(\mathbf{x}^*,\epsilon_1,\epsilon_2).
\end{align*}
This completes the proof.
\Halmos 


\subsection{Proof of Proposition \ref{prop: harmonizing_bias}}
We have 
\begin{align*}
\Gamma(\lambda) - V_N 
& = \min_{\mathbf{x}_{1} \in \mathcal{X}} F_{\lambda} \left( \mathbf{x}_1 \right) - \min_{\mathbf{x}_2 \in \mathcal{X}} F_{N} \left( \mathbf{x}_2 \right) 
= \min_{\mathbf{x}_{1} \in \mathcal{X}} F_{\lambda} \left( \mathbf{x}_1 \right) - F_{N} \left( \mathbf{x}_2^* \right)
\leq F_{\lambda} \left( \mathbf{x}_2^* \right) - F_{N} \left( \mathbf{x}_2^* \right) \\
& \leq \max_{\mathbf{x} \in \mathcal{X}} \left\{ F_{\lambda} \left( \mathbf{x} \right)-F_{N} \left( \mathbf{x} \right) \right\}
= \frac{C}{\sqrt{N}} \max_{\mathbf{x} \in \mathcal{X}} \left\{ \max_{\mathbb{P} \in \mathcal{D}} \mathbb{E}_{\mathbb{P}}\left[ f \left( \mathbf{x}, \boldsymbol{\xi} \right) \right] - \mathbb{E}_{\mathbb{P}_0} \left[ f \left( \mathbf{x}, \boldsymbol{\xi} \right) \right] \right\},
\end{align*}
where $\mathbf{x}_2^*$ is the optimal solution of $\min_{\mathbf{x}_2 \in \mathcal{X}} F_N( \mathbf{x}_2 )$.
Similarly, we also have
\begin{align*}
V_N - \Gamma(\lambda) 
\leq \frac{C}{\sqrt{N}} \max_{\mathbf{x} \in \mathcal{X}} \left\{ \mathbb{E}_{\mathbb{P}_0} \left[ f \left( \mathbf{x}, \boldsymbol{\xi} \right) \right] - \max_{\mathbb{P} \in \mathcal{D}} \mathbb{E}_{\mathbb{P}}\left[ f \left( \mathbf{x}, \boldsymbol{\xi} \right) \right] \right\}.
\end{align*}
Due to the assumption that $\mathbb{E}_{\mathbb{P}} [ |f ( \mathbf{x}, \boldsymbol{\xi} )| ] < \infty$ for any $\mathbf{x} \in \mathcal{X}$ with any given $\mathbb{P}$, 
there exists a finite constant $\overline{C}_1 > 0$ such that $|\Gamma(\lambda) - V_N| \leq \overline{C}_1 / \sqrt{N}$.
Similarly, there also exists a finite constant $\overline{C}_2 > 0$ such that $|F_{\lambda}(\mathbf{x}) - F_{N}(\mathbf{x}) | \leq \overline{C}_2 / \sqrt{N}$ for any $\mathbf{x} \in \mathcal{X}$.
It follows that
\begin{align}
& \Gamma(\lambda) = V_N + O\left( \frac{1}{\sqrt{N}} \right), \label{asymptotic_HRO_SAA_1} \\
& F_{N}(\mathbf{x}) = F_{\lambda}(\mathbf{x}) + O\left( \frac{1}{\sqrt{N}} \right), \ \forall \, \mathbf{x} \in \mathcal{X}. \label{asymptotic_HRO_SAA_2}
\end{align}
Equation \eqref{asymptotic_HRO_SAA_1} (resp. \eqref{asymptotic_HRO_SAA_2}) indicates the relationship between the optimal values (resp. objective functions) of model \eqref{model_harmonizing} and SAA model \eqref{model: SAA}.
From Theorem 5.7 in \cite{shapiro2021lectures}, we obtain the relationship between the optimal value of the SAA model \eqref{model: SAA} (i.e., $V_N$) and its objective function (i.e., $F_{N}(\mathbf{x}$)) on the optimal solution set of primal model \eqref{model0} (i.e., $\mathcal{X}^*$), as introduced below.
\begin{align}
    V_N = \inf_{\mathbf{x} \in \mathcal{X}^*} F_{N}(\mathbf{x}) + o_p \left( \frac{1}{\sqrt{N}} \right), \label{asymptotic_HRO_SAA_shapiro_1}
\end{align}
where $ o_p (\cdot) $ refers to convergence in probability to 0. 
By substituting $V_N$ with $\Gamma(\lambda)$ from \eqref{asymptotic_HRO_SAA_1} and $F_{N}(\mathbf{x})$ with $F_{\lambda}(\mathbf{x})$ from \eqref{asymptotic_HRO_SAA_2}, we can transform \eqref{asymptotic_HRO_SAA_shapiro_1} into $\Gamma ( \lambda ) = \inf_{\mathbf{x} \in \mathcal{X}^*} F_{\lambda}(\mathbf{x}) + O  ( 1/\sqrt{N} )$, i.e., the first part in \eqref{eqn: asymptotic_order_2}.

In addition, from Theorem 5.7 in \cite{shapiro2021lectures}, we obtain the relationship between the optimal value of the original model \eqref{model0} (i.e., $V^*$) and that of the SAA model \eqref{model: SAA} (i.e., $V_N$), as detailed below.
\begin{align}
& \sqrt{N} \left( V_N - V^* \right) \xrightarrow{\mathcal{D}}  \inf_{\mathbf{x} \in \mathcal{X}^*} Y \left( \mathbf{x} \right), \label{asymptotic_HRO_SAA_shapiro_2} \\
& \sqrt{N} \left( V_N - V^* \right) \xrightarrow{\mathcal{D}}  \mathcal{N} \left(0, \sigma^2\left( \mathbf{x}^* \right) \right), \ \text{if} \ \mathcal{X}^* = \{\mathbf{x}^*\} \ \text{is a singleton}. \label{asymptotic_HRO_SAA_shapiro_3}
\end{align}

By \eqref{asymptotic_HRO_SAA_1}, we have $\Gamma(\lambda) \rightarrow V_N$, which, together with \eqref{asymptotic_HRO_SAA_shapiro_2} and \eqref{asymptotic_HRO_SAA_shapiro_3}, leads to 
\begin{align*}
& \sqrt{N} \left( \Gamma \left( \lambda \right)  - V^* \right) \xrightarrow{\mathcal{D}}  \inf_{\mathbf{x} \in \mathcal{X}^*} Y \left( \mathbf{x} \right), \\
& \sqrt{N} \left( \Gamma \left( \lambda \right)  - V^* \right) \xrightarrow{\mathcal{D}}  \mathcal{N} \left(0, \sigma^2\left( \mathbf{x}^* \right) \right), \ \text{if} \ \mathcal{X}^* = \{\mathbf{x}^*\} \ \text{is a singleton},
\end{align*}
which are exactly the second part of \eqref{eqn: asymptotic_order_2} and \eqref{eqn: asymptotic_order_3}, respectively.
\Halmos 


\subsection{
Details of Parameter Estimation
} \label{sec: details of parameter estimation}

We describe three different methods of choosing $C$. 
\begin{enumerate}[label=(\roman*)]
\item \textbf{$K$-fold cross-validation}. 
Ideally, we should choose $C^*$ such that the optimal solution of model \eqref{model_harmonizing}, denoted by $\mathbf{x}(C^*)$, exhibits the best performance under the true distribution $\mathbb{P}$ over all possible values of $C$. 
However, it is impossible to find such a $C^*$ because $\mathbb{P}$ is unknown. 
Here we adopt the $K$-fold cross-validation to estimate such a $C^*$ using the training data. 
Specifically, we divide data samples $\tilde{\boldsymbol{\xi}}_1, \ldots, \tilde{\boldsymbol{\xi}}_N$ into $K$ subsets $\mathcal{S}_1, \ldots, \mathcal{S}_K$, by which we run $K$ iterations. 
In each iteration $k \in [K]$, we choose subset $\mathcal{S}_k$ as the training set and the remaining subsets $\cup_{i \in [K] \setminus \{k\}} \mathcal{S}_i$ as the validation set. 
Given $\mathcal{S}_k$, we consider a large number of candidates of $C$. 
For each candidate of $C$, we solve the corresponding model \eqref{model_harmonizing} and obtain an optimal solution $\mathbf{x}(C)$.
Then, we evaluate the out-of-sample results of all these solutions using the validation set and identify the best solution, $x(C)$, along with its corresponding candidate of $C$, denoted by $C_k$.
After $K$ iterations, we set $C^*=\sum_{k \in [K]} C_k / K$.

\item \textbf{Tightening the confidence interval in Proposition \ref{prop: interval}}.
In Proposition \ref{prop: interval}, we introduce that the gap between the objective value of the SAA model, i.e., $F_N(\mathbf{x})$, and the objective value of the original model, i.e., $F(\mathbf{x})$, is $z_{\alpha/2} \hat{\sigma}(\mathbf{x}) / \sqrt{N}$.
Note that when $N$ is large enough, our proposed model \eqref{model_harmonizing} becomes almost the same as the SAA model, by which the gap between the objective value of model \eqref{model_harmonizing} and $F(\mathbf{x})$ is approximately $z_{\alpha/2} \hat{\sigma}(\mathbf{x}) / \sqrt{N}$.
Thus, we can find a $C^{\dag}$ such that the optimal solution $\mathbf{x}(C^{\dag})$ of model \eqref{model_harmonizing} minimizes the gap over all possible values of $C$.
Specifically, given that $\mathbf{x}_{\text{SAA}}$ and $\mathbf{x}_{\text{DRO}}$ are optimal solutions of models \eqref{model: SAA} and \eqref{DRO}, respectively, 
we use $\mathbf{x}(C) = (1-C/\sqrt{N})\mathbf{x}_{\text{SAA}} + C/\sqrt{N} \mathbf{x}_{\text{DRO}}$ to approximate the solution of \eqref{model_harmonizing} and set $C$ as a variable, by which we identify $C^{\dag}$ and the corresponding $\mathbf{x}(C^{\dag})$ that minimizes the gap $z_{\alpha/2} \hat{\sigma}(\mathbf{x}) / \sqrt{N}$.
Same as the above method (i), we divide data samples $\tilde{\boldsymbol{\xi}}_1, \ldots, \tilde{\boldsymbol{\xi}}_N$ into $K$ subsets $\mathcal{S}_1, \ldots, \mathcal{S}_K$, construct the training and validation sets, and run $K$ iterations.
In each iteration $k \in [K]$, we use the training set to solve models \eqref{model: SAA} and \eqref{DRO} to obtain their optimal solutions $\mathbf{x}_{\text{SAA}}$ and $\mathbf{x}_{\text{DRO}}$, respectively.
We then use the validation set and apply golden-section search method to solve $\min_{C} \{z_{\alpha/2} \hat{\sigma}(\mathbf{x}(C)) / \sqrt{N} \ | \ \mathbf{x}(C) \in \mathcal{X} \}$ to obtain the optimal solution $C_k$.
After $K$ iterations, we set $C^{\dag}=\sum_{k \in [K]} C_k / K$.

\item \textbf{Straightforward estimation}. 
We set $C = \sqrt{M_0}$, where $M_0$ denotes the smallest number of samples we may have; that is, $\lambda = \sqrt{M_0} / \sqrt{N}$.
When the number of considered samples is the smallest, i.e., $N = M_0$, our HO model is the same as the DRO model, ensuring the robustness of the obtained solution.
\end{enumerate}

\subsection{
Computationally Tractable Forms of Model \eqref{model_harmonizing_reform}
} \label{sec: tractable reform}

In this section, we demonstrate that model \eqref{model_harmonizing_reform} is a computationally tractable program for several ambiguity sets of practical interests.

\begin{proposition} \label{prop: reform_moment}
Incorporating the mean-covariance ambiguity set $\mathcal{D}_{\textup T}$, i.e., $\mathcal{D} = \mathcal{D}_{\textup T}$, model \eqref{model_harmonizing_reform} shares the same optimal value with the following SDP formulation:
\begin{align}
    \min\limits_{\mathbf{x}, \mathbf{w}, s, \mathbf{q}, \mathbf{Q}} \
    & \left( 1-\lambda \right) \frac{1}{N} \sum_{j=1}^N w_j + \lambda \left( s + \gamma_2 \mathbf{I} \bullet \mathbf{Q} + \sqrt{\gamma_1}\left \| \mathbf{q} \right \|_2 \right) \label{model: harmonizing_moment_reform} \\
    {\normalfont \text{s.t.}} \ & \mathbf{x} \in \mathcal{X}, \nonumber \\
    & w_j \geq \alpha_k(\mathbf{x})^{\top} \tilde{\boldsymbol{\xi}}_j + \beta_k(\mathbf{x}), \ \forall \, j \in [N], \, k \in [K], \nonumber \\
    & \begin{bmatrix}
    s - \beta_k \left( \mathbf{x} \right) - \alpha_k \left( \mathbf{x} \right)^{\top}\boldsymbol{\mu}
    & \hspace{0.1 in}  \frac{1}{2}  \left( \mathbf{q} - \left(\mathbf{U}\boldsymbol{\Lambda}^{{\frac{1}{2}}}\right)^{\top} \alpha_k \left( \mathbf{x} \right) \right)^{\top} \\ 
    \frac{1}{2} \left( \mathbf{q} - \left(\mathbf{U}\boldsymbol{\Lambda}^{{\frac{1}{2}}}\right)^{\top} \alpha_k \left( \mathbf{x} \right) \right)
    & \mathbf{Q}
\end{bmatrix} \succeq 0, \ \forall \, k \in [K], \nonumber
\end{align}
where $\mathbf{U} \in \mathbb{R}^{m \times m}$ is an orthogonal transformation matrix, $\boldsymbol{\Lambda} \in \mathbb{R}^{m \times m}$ is a diagonal matrix, and they are obtained by an eigenvalue decomposition on $\boldsymbol{\Sigma}$, i.e., $\boldsymbol{\Sigma} = \mathbf{U} \boldsymbol{\Lambda} \mathbf{U}^{\top} = \mathbf{U} \boldsymbol{\Lambda}^{1/2}(\mathbf{U} \boldsymbol{\Lambda}^{1/2})^{\top}$.
\end{proposition}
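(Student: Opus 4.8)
Since Theorem \ref{theorem: reformulation} already gives that \eqref{model_harmonizing_reform} has the same optimal value as \eqref{model_harmonizing}, it suffices to show that \eqref{model_harmonizing} equals the SDP \eqref{model: harmonizing_moment_reform} when $\mathcal{D}=\mathcal{D}_{\textup T}$. The plan is to rewrite each of the two summands of $F_{\lambda}(\mathbf{x})$ as the optimal value of a tractable minimization and then merge these minimizations under the outer $\min_{\mathbf{x}\in\mathcal{X}}$. The data summand is immediate: since $f(\mathbf{x},\boldsymbol{\xi})=\max_{k\in[K]}\{\alpha_k(\mathbf{x})^{\top}\boldsymbol{\xi}+\beta_k(\mathbf{x})\}$, we have $\mathbb{E}_{\mathbb{P}_0}[f(\mathbf{x},\boldsymbol{\xi})]=\frac{1}{N}\sum_{j=1}^N\max_{k\in[K]}\{\alpha_k(\mathbf{x})^{\top}\tilde{\boldsymbol{\xi}}_j+\beta_k(\mathbf{x})\}$, which equals $\min\{\frac{1}{N}\sum_{j=1}^N w_j : w_j\ge\alpha_k(\mathbf{x})^{\top}\tilde{\boldsymbol{\xi}}_j+\beta_k(\mathbf{x}),\ \forall\,j\in[N],\,k\in[K]\}$. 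This supplies the variables $\mathbf{w}$ and the first constraint block of \eqref{model: harmonizing_moment_reform}.

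The substance lies in the information summand $\max_{\mathbb{P}\in\mathcal{D}_{\textup T}}\mathbb{E}_{\mathbb{P}}[f(\mathbf{x},\boldsymbol{\xi})]$. By Proposition \ref{prop: reform_to_dro} (applied to $f$, which does not depend on $\mathbf{u}$) together with the description of $\Pi_{\boldsymbol{\xi}}\mathcal{D}_{\textup T}$ in Appendix \ref{sec: Special Cases}, this is the worst-case expectation of the piecewise-affine convex $f(\mathbf{x},\cdot)$ over all distributions whose mean lies in the ellipsoid $\{\boldsymbol{\nu}:(\boldsymbol{\nu}-\boldsymbol{\mu})^{\top}\boldsymbol{\Sigma}^{-1}(\boldsymbol{\nu}-\boldsymbol{\mu})\le\gamma_1\}$ and whose second-moment matrix satisfies $\mathbb{E}[(\boldsymbol{\xi}-\boldsymbol{\mu})(\boldsymbol{\xi}-\boldsymbol{\mu})^{\top}]\preceq\gamma_2\boldsymbol{\Sigma}$ (I take the support $\mathcal{S}=\mathbb{R}^m$, consistent with the absence of a support term in \eqref{model: harmonizing_moment_reform}). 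I would whiten via the eigendecomposition $\boldsymbol{\Sigma}=\mathbf{U}\boldsymbol{\Lambda}\mathbf{U}^{\top}$, substituting $\boldsymbol{\xi}=\boldsymbol{\mu}+\mathbf{U}\boldsymbol{\Lambda}^{1/2}\boldsymbol{\zeta}$: the moment bound becomes $\mathbb{E}[\boldsymbol{\zeta}\boldsymbol{\zeta}^{\top}]\preceq\gamma_2\mathbf{I}$, the mean bound becomes $\|\mathbb{E}[\boldsymbol{\zeta}]\|_2\le\sqrt{\gamma_1}$, and $f_k(\mathbf{x},\boldsymbol{\xi})$ becomes the affine map $\boldsymbol{\zeta}\mapsto(\alpha_k(\mathbf{x})^{\top}\boldsymbol{\mu}+\beta_k(\mathbf{x}))+((\mathbf{U}\boldsymbol{\Lambda}^{1/2})^{\top}\alpha_k(\mathbf{x}))^{\top}\boldsymbol{\zeta}$. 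Writing the resulting moment problem in the moment matrix of $\boldsymbol{\zeta}$ and taking its conic dual, the normalization gives the scalar $s$, the second-order-cone mean bound gives a vector multiplier $\mathbf{q}$ with objective contribution $\sqrt{\gamma_1}\|\mathbf{q}\|_2$, and the matrix bound gives $\mathbf{Q}\succeq0$ with contribution $\gamma_2\mathbf{I}\bullet\mathbf{Q}$; dual feasibility is precisely that $s+\mathbf{q}^{\top}\boldsymbol{\zeta}+\boldsymbol{\zeta}^{\top}\mathbf{Q}\boldsymbol{\zeta}$ dominate each $f_k$ over all $\boldsymbol{\zeta}\in\mathbb{R}^m$, and by the Schur-complement characterization of a quadratic that is nonnegative on all of $\mathbb{R}^m$ this is exactly the $k$-indexed LMI in \eqref{model: harmonizing_moment_reform}. (When $\mathcal{S}$ is conic-representable one obtains an analogous dual by instead specializing Theorem \ref{theorem: reformulation}, i.e.\ Theorem 1 of \citealt{wiesemann2014distributionally}, to the conic data of $\mathcal{D}_{\textup T}$ and simplifying.)

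The step I expect to need the most care is strong conic duality for the inner moment problem, so that the worst-case expectation equals its SDP dual with no gap: this follows from a Slater-type condition, since $\gamma_2>1$, $\gamma_1\ge0$ and $\boldsymbol{\Sigma}\succ0$ guarantee a strictly feasible distribution for the $\boldsymbol{\zeta}$-ambiguity set (e.g.\ the Gaussian with mean $\boldsymbol{\mu}$ and covariance $\boldsymbol{\Sigma}$), yielding primal/dual attainment and equal values; I would invoke the standard moment-duality results here (e.g.\ \citealt{delage2010distributionally}). It then remains to reassemble: the information summand equals $\lambda$ times this SDP dual value, and since $\lambda\ge0$ and the dual variables $(s,\mathbf{q},\mathbf{Q})$ are uncoupled from $\mathbf{w}$, the nonnegative factor $\lambda$ passes through the inner minimization and all auxiliary variables collapse into a single minimization over $(\mathbf{x},\mathbf{w},s,\mathbf{q},\mathbf{Q})$, which is exactly \eqref{model: harmonizing_moment_reform}. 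This shows \eqref{model_harmonizing_reform} and \eqref{model: harmonizing_moment_reform} share the same optimal value.
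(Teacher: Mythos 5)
Your proposal is correct and follows essentially the same route as the paper: the paper's proof simply observes that the SAA term yields the epigraph constraints on $\mathbf{w}$ and then cites Proposition 1 of \cite{cheramin2022computationally} for the dual SDP reformulation of $\max_{\mathbb{P}_{\boldsymbol{\xi}} \in \Pi_{\boldsymbol{\xi}}\mathcal{D}_{\textup T}} \mathbb{E}_{\mathbb{P}_{\boldsymbol{\xi}}}[f(\mathbf{x},\boldsymbol{\xi})]$, which is exactly the whitening-by-eigendecomposition plus conic (moment) duality argument you spell out, including the implicit $\mathcal{S}=\mathbb{R}^m$ convention. Your only deviation is that you re-derive the cited result rather than invoke it, which adds the standard Slater/strong-duality caveat at boundary parameters ($\gamma_1=0$ or $\gamma_2=1$) that the cited literature already handles.
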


\begin{proof}{Proof.}
When using $\mathcal{D}_{\textup T}$, model \eqref{model_harmonizing} shares the same optimal value with
\begin{align} \label{model: harmonizing_moment}
    \min_{\mathbf{x} \in \mathcal{X}} \left\{ \left( 1 - \lambda \right) \mathbb{E}_{\mathbb{P}_0} \left[ f \left( \mathbf{x}, \boldsymbol{\xi} \right) \right] + \lambda \max_{\mathbb{P}_{\boldsymbol{\xi}} \in \Pi_{\boldsymbol{\xi}} \mathcal{D}_{\textup T}} \mathbb{E}_{\mathbb{P}_{\boldsymbol{\xi}}} \left[ f \left( \mathbf{x}, \boldsymbol{\xi} \right) \right] \right\},
\end{align}
where
\begin{align*}
\Pi_{\boldsymbol{\xi}} \mathcal{D}_{\textup T} = 
\left\{
    \mathbb{P}_{\boldsymbol{\xi}} \in \mathcal{D}_0 \left( \mathbb{R}^m \right)
   \ \middle| \
    \begin{array}{l}
    \mathbb{P}_{\boldsymbol{\xi}} \left( \boldsymbol{\xi} \in \mathcal{S} \right) = 1 \\
    \left( \mathbb{E}_{\mathbb{P}_{\boldsymbol{\xi}}} \left[ \boldsymbol{\xi} \right] -\boldsymbol{\mu} \right)^{\top} \boldsymbol{\Sigma}^{-1} \left( \mathbb{E}_{\mathbb{P}_{\boldsymbol{\xi}}}\left[ \boldsymbol{\xi} \right]-
    \boldsymbol{\mu} \right)\leq \gamma_1 \\
    \mathbb{E}_{\mathbb{P}_{\boldsymbol{\xi}}} \left[ \left( \boldsymbol{\xi} - \boldsymbol{\mu} \right) \left( \boldsymbol{\xi} - \boldsymbol{\mu} \right)^{\top} \right]  \preceq \gamma_2\boldsymbol{\Sigma}
    \end{array}
\right\}.
\end{align*}
By Proposition 1 in \cite{cheramin2022computationally}, we can reformulate model \eqref{model: harmonizing_moment} as \eqref{model: harmonizing_moment_reform}.
\end{proof}

\begin{proposition} \label{prop: reform_mad}
Incorporating the MAD ambiguity set $\mathcal{D}_{\textup D}$, i.e., $\mathcal{D}=\mathcal{D}_{\textup D}$, model \eqref{model_harmonizing_reform} shares the same optimal value with the following LP formulation:
\begin{align} \label{model: harmonizing_mad_reform}
    \min\limits_{\mathbf{x}, \mathbf{w}, s, \mathbf{q}, \boldsymbol{\pi}} \ &  \left( 1-\lambda \right) \frac{1}{N} \sum_{j=1}^N w_j + \lambda \left( s + \boldsymbol{\delta}^{\top} \mathbf{q} \right) \\
    {\normalfont \text{s.t.}} \ & \mathbf{x} \in \mathcal{X}, \nonumber \\
    & w_j \geq \alpha_k(\mathbf{x})^{\top} \tilde{\boldsymbol{\xi}}_j + \beta_k(\mathbf{x}), \ \forall \, j \in [N], \, k \in [K], \nonumber \nonumber \\
    & \alpha_k\left( \mathbf{x} \right)^{\top} \boldsymbol{\mu} + \beta_k \left( \mathbf{x} \right) \leq s, \ \forall \, k \in [K], \nonumber \\
    & | \alpha_k\left( \mathbf{x} \right) + \boldsymbol{\pi} | \leq \mathbf{q}, \ \forall \, k \in [K], \nonumber \\
    & \mathbf{q} \geq \mathbf{0}. \nonumber
\end{align}

\end{proposition}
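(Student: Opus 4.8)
The plan is to split model \eqref{model_harmonizing} with $\mathcal{D}=\mathcal{D}_{\textup D}$ into the two pieces of its objective and reformulate each. For the empirical term, since $f(\mathbf{x},\boldsymbol{\xi})=\max_{k\in[K]}\{\alpha_k(\mathbf{x})^\top\boldsymbol{\xi}+\beta_k(\mathbf{x})\}$ is piecewise affine, $(1-\lambda)\mathbb{E}_{\mathbb{P}_0}[f(\mathbf{x},\boldsymbol{\xi})]=(1-\lambda)\frac1N\sum_{j=1}^N\max_k\{\alpha_k(\mathbf{x})^\top\tilde{\boldsymbol{\xi}}_j+\beta_k(\mathbf{x})\}$ is linearized by epigraph variables $w_j$ with $w_j\ge\alpha_k(\mathbf{x})^\top\tilde{\boldsymbol{\xi}}_j+\beta_k(\mathbf{x})$ for all $j\in[N],\,k\in[K]$ — exactly the first block of constraints in \eqref{model: harmonizing_mad_reform}. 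The work is therefore concentrated on the worst-case term $\lambda\max_{\mathbb{P}\in\mathcal{D}_{\textup D}}\mathbb{E}_{\mathbb{P}}[f(\mathbf{x},\boldsymbol{\xi})]$.

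Next I would dualize this inner worst-case expectation. One clean route is to note that $\mathcal{D}_{\textup D}$ is a special case of the standard-form set \eqref{ambiguity_set}: encode $\mathbb{E}[\boldsymbol{\xi}]=\boldsymbol{\mu}$ and $\mathbb{E}[\mathbf{u}]=\boldsymbol{\delta}$ through $\mathbb{E}_{\mathbb{P}}[\mathbf{A}\boldsymbol{\xi}+\mathbf{B}\mathbf{u}]=\mathbf{b}$, and use a single confidence set $\mathcal{C}_1$ with $\underline p_1=\overline p_1=1$ describing $\{\boldsymbol{\xi}\in\mathcal{S},\ \mathbf{u}\ge\boldsymbol{\xi}-\boldsymbol{\mu},\ \mathbf{u}\ge\boldsymbol{\mu}-\boldsymbol{\xi}\}$ with $\mathcal{K}_1=\mathbb{R}^{2m}_+$; then Theorem \ref{theorem: reformulation} applies and \eqref{model_harmonizing_reform} specializes to this instance, after which one eliminates the conic multipliers $\boldsymbol{\theta}_{1,k}$ and the slacks $\kappa_1,\tau_1$. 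Equivalently, and more transparently, one can dualize the generalized moment problem $\max\{\mathbb{E}_{\mathbb{P}_{\boldsymbol{\xi}}}[f(\mathbf{x},\boldsymbol{\xi})]:\mathbb{P}_{\boldsymbol{\xi}}\in\Pi_{\boldsymbol{\xi}}\mathcal{D}_{\textup D}\}$ directly, using the characterization of $\Pi_{\boldsymbol{\xi}}\mathcal{D}_{\textup D}$ from Appendix \ref{sec: Special Cases} (support $\mathcal{S}$, mean $\boldsymbol{\mu}$, and $\mathbb{E}[|\boldsymbol{\xi}-\boldsymbol{\mu}|]\le\boldsymbol{\delta}$): assigning dual variables $s$ to normalization, $\boldsymbol{\pi}$ to the mean equality, and $\mathbf{q}\ge\mathbf{0}$ to the (inequality) mean-absolute-deviation constraint, strong duality gives $\min\{s+\boldsymbol{\delta}^\top\mathbf{q}:\ s+\boldsymbol{\pi}^\top(\boldsymbol{\xi}-\boldsymbol{\mu})+\mathbf{q}^\top|\boldsymbol{\xi}-\boldsymbol{\mu}|\ge\alpha_k(\mathbf{x})^\top\boldsymbol{\xi}+\beta_k(\mathbf{x})\ \forall\,\boldsymbol{\xi}\in\mathcal{S},\,k\in[K],\ \mathbf{q}\ge\mathbf{0}\}$, after absorbing the $\boldsymbol{\mu}^\top\boldsymbol{\pi}$ term by shifting $s$.

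Finally I would reduce the semi-infinite constraint to the finitely many constraints of \eqref{model: harmonizing_mad_reform}. Writing $\boldsymbol{\eta}=\boldsymbol{\xi}-\boldsymbol{\mu}$ and $\alpha_k(\mathbf{x})^\top\boldsymbol{\xi}+\beta_k(\mathbf{x})=\alpha_k(\mathbf{x})^\top\boldsymbol{\eta}+\big(\alpha_k(\mathbf{x})^\top\boldsymbol{\mu}+\beta_k(\mathbf{x})\big)$, the $k$-th constraint becomes $s-\big(\alpha_k(\mathbf{x})^\top\boldsymbol{\mu}+\beta_k(\mathbf{x})\big)\ge\big(\alpha_k(\mathbf{x})-\boldsymbol{\pi}\big)^\top\boldsymbol{\eta}-\mathbf{q}^\top|\boldsymbol{\eta}|$ for all admissible $\boldsymbol{\eta}$; over $\boldsymbol{\eta}\in\mathbb{R}^m$ the right-hand side has supremum $0$, attained at $\boldsymbol{\eta}=\mathbf 0$, precisely when $\big|\alpha_k(\mathbf{x})-\boldsymbol{\pi}\big|\le\mathbf{q}$ componentwise (checking one coordinate direction at a time), and supremum $+\infty$ otherwise. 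Hence the constraint is equivalent to the pair $\alpha_k(\mathbf{x})^\top\boldsymbol{\mu}+\beta_k(\mathbf{x})\le s$ and $|\alpha_k(\mathbf{x})+\boldsymbol{\pi}|\le\mathbf{q}$ (the latter after the sign change $\boldsymbol{\pi}\mapsto-\boldsymbol{\pi}$). Combining with the $w_j$-block and objective $(1-\lambda)\frac1N\sum_j w_j+\lambda(s+\boldsymbol{\delta}^\top\mathbf{q})$ yields exactly \eqref{model: harmonizing_mad_reform}.

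The main obstacle I anticipate is justifying strong duality for the infinite-dimensional moment problem and the exactness of the semi-infinite reduction. Strong duality needs a Slater-type interior condition — supplied by condition (ii) of the tractability assumptions, i.e., the existence of a distribution in $\Pi_{\boldsymbol{\xi}}\mathcal{D}_{\textup D}$ with $\mathbb{E}[|\boldsymbol{\xi}-\boldsymbol{\mu}|]<\boldsymbol{\delta}$ — together with finiteness of the worst-case value, which follows from the assumed Lipschitz continuity of $f(\mathbf{x},\cdot)$ and the MAD bound. The reduction as written uses $\mathcal{S}=\mathbb{R}^m$ (or an $\mathcal{S}$ large enough that the unconstrained supremum over $\boldsymbol{\eta}$ is attained inside $\mathcal{S}-\boldsymbol{\mu}$); for a proper bounded $\mathcal{S}$ one must also dualize the support, which is precisely the route of Theorem \ref{theorem: reformulation}, and there the subtlety migrates to condition (i): the lifted confidence set must be bounded, which requires first replacing $\mathbf{u}$ by a bounded surrogate — legitimate since $f$ does not depend on $\mathbf{u}$, so one may clip $\mathbf{u}$ at $|\boldsymbol{\xi}-\boldsymbol{\mu}|$ plus a deterministic offset without changing the optimal value.
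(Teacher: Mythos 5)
Your proposal matches the paper's own proof in all essentials: linearize the empirical term with epigraph variables $w_j$, take the Lagrangian dual of the inner moment problem over $\Pi_{\boldsymbol{\xi}}\mathcal{D}_{\textup D}$ with multipliers $s$, $\boldsymbol{\pi}$, $\mathbf{q}\ge\mathbf{0}$, and reduce the resulting semi-infinite constraint (posed over all of $\mathbb{R}^m$, exactly as the paper does) to $\alpha_k(\mathbf{x})^{\top}\boldsymbol{\mu}+\beta_k(\mathbf{x})\le s$ and $|\alpha_k(\mathbf{x})+\boldsymbol{\pi}|\le\mathbf{q}$. The only cosmetic differences are your sign convention on $\boldsymbol{\pi}$ and that you compute the coordinate-wise supremum of $\left(\alpha_k(\mathbf{x})-\boldsymbol{\pi}\right)^{\top}\boldsymbol{\eta}-\mathbf{q}^{\top}|\boldsymbol{\eta}|$ directly, whereas the paper reaches the same conclusion by writing $\mathbf{q}^{\top}|\boldsymbol{\eta}|=\max_{|\mathbf{z}|\le\mathbf{q}}\mathbf{z}^{\top}\boldsymbol{\eta}$ and invoking Sion's minimax theorem.
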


\begin{proof}{Proof.}
When using $\mathcal{D}_{\textup D}$, model \eqref{model_harmonizing} shares the same optimal value with
    \begin{align} \label{model: harmonizing_mad}
        \min_{\mathbf{x} \in \mathcal{X}} \left\{ \left( 1 - \lambda \right) \mathbb{E}_{\mathbb{P}_0} \left[ f \left( \mathbf{x}, \boldsymbol{\xi} \right) \right] + \lambda \max_{\mathbb{P}_{\boldsymbol{\xi}} \in \Pi_{\boldsymbol{\xi}} \mathcal{D}_{\textup D}} \mathbb{E}_{\mathbb{P}_{\boldsymbol{\xi}}} \left[ f \left( \mathbf{x}, \boldsymbol{\xi} \right) \right] \right\},
\end{align}
where
\begin{align} \label{mad_ambiguity_set}
    \Pi_{\boldsymbol{\xi}} \mathcal{D}_{\textup D} = 
    \left\{
        \mathbb{P}_{\boldsymbol{\xi}} \in \mathcal{D}_0 \left( \mathbb{R}^m \right)
       \ \middle| \
        \begin{array}{l}
        \mathbb{P}_{\boldsymbol{\xi}} \left( \boldsymbol{\xi} \in \mathcal{S} \right) = 1 \\
        \mathbb{E}_{\mathbb{P}_{\boldsymbol{\xi}}} \left[ \boldsymbol{\xi} \right] = \boldsymbol{\mu} \\
        \mathbb{E}_{\mathbb{P}_{\boldsymbol{\xi}}} \left[ | \boldsymbol{\xi} - \boldsymbol{\mu} | \right]  \leq \boldsymbol{\delta}
        \end{array}
    \right\}.
\end{align}

Introducing dual variables $s \in \mathbb{R}$, $\boldsymbol{\pi} \in \mathbb{R}^m$, and $\mathbf{q} \in \mathbb{R}_+^m$ with respect to the three constraints on $\mathbb{P}_{\boldsymbol{\xi}}$ in \eqref{mad_ambiguity_set}, 
we then present the Lagrange dual form of $\max_{\mathbb{P}_{\boldsymbol{\xi}} \in \Pi_{\boldsymbol{\xi}} \mathcal{D}_{\textup D}} \mathbb{E}_{\mathbb{P}_{\boldsymbol{\xi}}} [ f ( \mathbf{x}, \boldsymbol{\xi} ) ]$ in model \eqref{model: harmonizing_mad} as
\begin{align}
    \min\limits_{s, \mathbf{q}, \boldsymbol{\pi}} \ & s + \boldsymbol{\delta}^{\top} \mathbf{q} \label{model: mad_innermax} \\
    {\normalfont \text{s.t.}} \ & f \left( \mathbf{x}, \boldsymbol{\xi} \right) + \boldsymbol{\pi}^{\top} \left( \boldsymbol{\xi} - \boldsymbol{\mu} \right) - \mathbf{q}^{\top} |\boldsymbol{\xi} - \boldsymbol{\mu} | \leq s, \ \forall \, \boldsymbol{\xi} \in \mathbb{R}^m, \label{ineq: mad_cons_dual} \\
    & \mathbf{q} \geq \mathbf{0}. \nonumber
\end{align}
By the assumption of $f(\mathbf{x}, \boldsymbol{\xi}) = \max_{k \in [K]} \{ \alpha_k(\mathbf{x})^{\top} \boldsymbol{\xi} + \beta_k(\mathbf{x}) \}$ (see the beginning of this section) 
and $\mathbf{q}^{\top} |\boldsymbol{\xi} - \boldsymbol{\mu} | = \max_{|\mathbf{z}| \leq \mathbf{q}} \mathbf{z}^{\top} (\boldsymbol{\xi} - \boldsymbol{\mu})$, 
we have
\begin{align}
    \eqref{ineq: mad_cons_dual}
    & \Leftrightarrow 
    \max_{k \in [K]} \left \{\alpha_k \left( \mathbf{x} \right)^{\top} \boldsymbol{\xi} + \beta_k \left( \mathbf{x} \right) \right\} + \boldsymbol{\pi}^{\top} \left( \boldsymbol{\xi} - \boldsymbol{\mu} \right) - \mathbf{q}^{\top} |\boldsymbol{\xi} - \boldsymbol{\mu} | \leq s, \ \forall \, \boldsymbol{\xi} \in \mathbb{R}^m \nonumber \\
    & \Leftrightarrow 
    \alpha_k \left( \mathbf{x} \right)^{\top} \boldsymbol{\xi} + \beta_k \left( \mathbf{x} \right) + \boldsymbol{\pi}^{\top} \left( \boldsymbol{\xi} - \boldsymbol{\mu} \right) - \mathbf{q}^{\top} |\boldsymbol{\xi} - \boldsymbol{\mu} | \leq s, \ \forall \, \boldsymbol{\xi} \in \mathbb{R}^m, k \in [K]  \nonumber \\
    & \Leftrightarrow 
    \alpha_k \left( \mathbf{x} \right)^{\top} \boldsymbol{\xi} + \beta_k \left( \mathbf{x} \right) + \boldsymbol{\pi}^{\top} \left( \boldsymbol{\xi} - \boldsymbol{\mu} \right) - \max_{|\mathbf{z}_k| \leq \mathbf{q}} \left\{ \mathbf{z}_k^{\top} \left( \boldsymbol{\xi} - \boldsymbol{\mu} \right) \right\} \leq s, \ \forall \, \boldsymbol{\xi} \in \mathbb{R}^m, k \in [K]  \nonumber \\
    & \Leftrightarrow 
    \max_{\boldsymbol{\xi} \in \mathbb{R}^m} \min_{|\mathbf{z}_k| \leq \mathbf{q}} \ \alpha_k \left( \mathbf{x} \right)^{\top} \boldsymbol{\xi} + \beta_k \left( \mathbf{x} \right) + \boldsymbol{\pi}^{\top} \left( \boldsymbol{\xi} - \boldsymbol{\mu} \right) - \mathbf{z}_k^{\top} \left( \boldsymbol{\xi} - \boldsymbol{\mu} \right) \leq s, \ \forall \, k \in [K]  \nonumber \\
    & \Leftrightarrow 
    \min_{|\mathbf{z}_k| \leq \mathbf{q}} \max_{\boldsymbol{\xi} \in \mathbb{R}^m} \ \alpha_k \left( \mathbf{x} \right)^{\top} \boldsymbol{\xi} + \beta_k \left( \mathbf{x} \right) + \boldsymbol{\pi}^{\top} \left( \boldsymbol{\xi} - \boldsymbol{\mu} \right) - \mathbf{z}_k^{\top} \left( \boldsymbol{\xi} - \boldsymbol{\mu} \right) \leq s, \ \forall \, k \in [K]  \label{eqn: maxmin} \\
    & \Leftrightarrow 
    \exists \, \mathbf{z}_k, \ {\normalfont \text{s.t.}} \ |\mathbf{z}_k| \leq \mathbf{q}, \ 
    \max_{\boldsymbol{\xi} \in \mathbb{R}^m} \ \alpha_k \left( \mathbf{x} \right)^{\top} \boldsymbol{\xi} + \beta_k \left( \mathbf{x} \right) + \boldsymbol{\pi}^{\top} \left( \boldsymbol{\xi} - \boldsymbol{\mu} \right) - \mathbf{z}_k^{\top} \left( \boldsymbol{\xi} - \boldsymbol{\mu} \right) \leq s, \ \forall \, k \in [K]  \nonumber \\
    & \Leftrightarrow 
    \exists \, \mathbf{z}_k, \ {\normalfont \text{s.t.}} \ |\mathbf{z}_k| \leq \mathbf{q}, \ \max_{\boldsymbol{\xi} \in \mathbb{R}^m} \ 
    \left( \alpha_k \left( \mathbf{x} \right) + \boldsymbol{\pi} - \mathbf{z}_k \right)^{\top} \boldsymbol{\xi} \leq s - \beta_k \left( \mathbf{x} \right) + \boldsymbol{\pi}^{\top}\boldsymbol{\mu} - \mathbf{z}_k^{\top}\boldsymbol{\mu}, \ \forall \, k \in [K]  \label{eqn: max_xi} \\
    & \Leftrightarrow 
    \exists \, \mathbf{z}_k, \ {\normalfont \text{s.t.}} \ |\mathbf{z}_k| \leq \mathbf{q}, \ 0 \leq s - \beta_k \left( \mathbf{x} \right) + \boldsymbol{\pi}^{\top}\boldsymbol{\mu} - \mathbf{z}_k^{\top}\boldsymbol{\mu}, \ \alpha_k \left( \mathbf{x} \right) + \boldsymbol{\pi} - \mathbf{z}_k = \mathbf{0}, \ \forall \, k \in [K], \label{ineq: mad_cons_dual_reform}
\end{align}
where equivalence \eqref{eqn: maxmin} holds by the Sion’s minimax theorem \citep{sion1958general} because function $\alpha_k ( \mathbf{x} )^{\top} \boldsymbol{\xi} + \beta_k ( \mathbf{x} ) + \boldsymbol{\pi}^{\top} ( \boldsymbol{\xi} - \boldsymbol{\mu} ) - \mathbf{z}_k^{\top} ( \boldsymbol{\xi} - \boldsymbol{\mu} )$ is concave (specifically, linear) on $\boldsymbol{\xi}$ and convex (specifically, linear) on $\mathbf{z}_k$, and the feasible region defined by $|\mathbf{z}_k| \leq \mathbf{q}$ is compact and convex for any finite $\mathbf{q} \geq \mathbf{0}$.
Equivalence \eqref{ineq: mad_cons_dual_reform} holds due to $\boldsymbol{\xi} \in \mathbb{R}^m$, which implies that $\alpha_k ( \mathbf{x} ) + \boldsymbol{\pi} - \mathbf{z}_k$ has to be $\mathbf{0}$ for any $k \in [K]$, otherwise the left-hand side of \eqref{eqn: max_xi} goes to infinity.

By replacing \eqref{ineq: mad_cons_dual} with \eqref{ineq: mad_cons_dual_reform} and $\mathbf{z}_k$ with $\alpha_k (\mathbf{x}) + \boldsymbol{\pi}$ for any $k \in [K]$, we can reformulate \eqref{model: mad_innermax} as
\begin{align} \label{model: mad_innermax_reform}
    \min\limits_{s, \mathbf{q}, \boldsymbol{\pi}} \ & s + \boldsymbol{\delta}^{\top} \mathbf{q} \\
    {\normalfont \text{s.t.}} \ 
    & \alpha_k\left( \mathbf{x} \right)^{\top} \boldsymbol{\mu} + \beta_k \left( \mathbf{x} \right) \leq s, \ \forall \, k \in [K], \nonumber \\
    & | \alpha_k\left( \mathbf{x} \right) + \boldsymbol{\pi} | \leq \mathbf{q}, \ \forall \, k \in [K], \nonumber \\
    & \mathbf{q} \geq \mathbf{0}. \nonumber
\end{align}

\noindent
By further integrating \eqref{model: mad_innermax_reform} with the outer minimization problem of \eqref{model: harmonizing_mad}, we then obtain \eqref{model: harmonizing_mad_reform}.
\end{proof}

\section{Supplement to Section \ref{sec: Numerical Experiments}}

\subsection{Parameter Settings in Numerical Experiments} \label{app: Parameter Settings in Numerical Experiments}

In the numerical experiments for the mean-risk portfolio optimization problem, we set $m=10$, $a = 0.2$, and $\rho = 10$. 
For any asset $i = 1, \ldots, m$, its uncertain return $\xi_i$ can be decomposed into a systematic risk factor $\phi \in \mathbb{R}$, which is common to all assets, and an idiosyncratic risk factor $\epsilon_i \in \mathbb{R}$: $\xi_i = \phi + \epsilon_i$. 
Here $\phi \sim \mathcal{N}\left( 0, 0.02 \right)$ and $\epsilon_i \sim \mathcal{N} \left( i \times 0.03, i \times 0.025 \right)$ for any $i = 1, \ldots, m$, by which we draw the training and test samples. 
Under this setting, assets with higher indices promise higher mean returns at a higher risk.

In the numerical experiments for the lot sizing problem, we set $m = 30$, $a_i \sim U(0.5, 1.5)$, $c_i = 5\sum_{j \in [m]} b_{j,i}$ for any $i \in [m]$, $Y_{i,j} = 1$ for any $i, j \in [m]$ if $i \neq j$ and $Y_{i,j} = 0$ otherwise, $\mu_i \sim U(300, 420)$, $\underline{d}_i \sim U(60, \mu_i-60)$, $\overline{d}_i \sim U(\mu_i+60, 660)$, $\xi_i \sim U(\underline{d}_i, \overline{d}_i)$, and $K_i = \overline{d}_i$ for any $i \in [m]$.
For any $i, j \in [m]$, we set $b_{i,j} = 0$ if $i=j$ and $b_{i,j} \sim U(\underline{b}_{i,j}, \underline{b}_{i,j} + 1)$ if $i \neq j$, where
\begin{align*}
    & \underline{b}_{i,j} = \begin{cases} 
          1+0.5k, \ & \text{ if }  |i-j| \in [4k+1, 4(k+1)]_{\mathbb{Z}}, \ k \in [0, 6]_{\mathbb{Z}} \\
        4.5, \ & \text { otherwise }
    \end{cases}.
\end{align*}
We draw both the training and test samples based on the above setting.
In $\mathcal{D}_{\textup D}$, the support $\mathcal{S} = \{\boldsymbol{\xi} \ | \ \underline{\mathbf{d}} \leq \boldsymbol{\xi} \leq \overline{\mathbf{d}} \}$ and the mean vector $\boldsymbol{\mu}$ is estimated from all the $N$ training samples.

\subsection{Setup for Mean-risk Portfolio Optimization Problem} \label{app: setup portfolio}

By Proposition \ref{prop: reform_moment}, we can reformulate problem \eqref{model: portfolio_2} with $\mathcal{D}$ being $\mathcal{D}_{\textup T}$ as
\begin{align*}
        \min\limits_{\mathbf{x}, \mathbf{w}, \tau, s, \mathbf{q}, \mathbf{Q}} \
        & \left( 1-\lambda \right) \frac{1}{N} \sum_{j=1}^N w_j + \lambda \left( s + \gamma_2 \mathbf{I} \bullet \mathbf{Q} + \sqrt{\gamma_1}\left \| \mathbf{q} \right \|_2 \right) \\
        \text{s.t.} \ & \sum_{i=1}^m x_i = 1, \\
        & x_i \geq 0, \ \forall \, i \in [m], \\
        & w_j \geq \alpha_k\mathbf{x}^{\top} \tilde{\boldsymbol{\xi}}_j + \beta_k \tau, \ \forall \, j \in [N], \, k \in [K], \\
        & \begin{bmatrix} 
            s - \beta_k \tau - \alpha_k \mathbf{x}^{\top}\boldsymbol{\mu}
            & \hspace{0.1 in}  \frac{1}{2}  \left( \mathbf{q} - \left(\mathbf{U}\boldsymbol{\Lambda}^{{\frac{1}{2}}}\right)^{\top} \alpha_k \mathbf{x} \right)^{\top} \\ 
            \frac{1}{2} \left( \mathbf{q} - \left(\mathbf{U}\boldsymbol{\Lambda}^{{\frac{1}{2}}}\right)^{\top} \alpha_k \mathbf{x}  \right)
            & \mathbf{Q}
    \end{bmatrix} \succeq 0, \ \forall \, k \in [K].
\end{align*}

\noindent
Also, we reformulate problem \eqref{model: portfolio_2} with $\mathcal{D}$ being $\mathcal{D}_{\textup D}$ as
\begin{align*}
    \min\limits_{\mathbf{x}, \mathbf{w}, \tau, s, \mathbf{q}, \boldsymbol{\pi}} \ &  \left( 1-\lambda \right) \frac{1}{N} \sum_{j=1}^N w_j + \lambda \left( s + \boldsymbol{\delta}^{\top} \mathbf{q} \right) \\
    \text{s.t.} \ & \sum_{i=1}^m x_i = 1, \\
    & x_i \geq 0, \ \forall \, i \in [m], \\
    & w_j \geq \alpha_k\mathbf{x}^{\top} \tilde{\boldsymbol{\xi}}_j + \beta_k \tau, \ \forall \, j \in [N], \, k \in [K], \\
    & \alpha_k \mathbf{x}^{\top} \boldsymbol{\mu} + \beta_k \tau \leq s, \ \forall \, k \in [K], \nonumber \\
    & | \alpha_k \mathbf{x} + \boldsymbol{\pi} | \leq \mathbf{q}, \ \forall \, k \in [K], \nonumber \\
        & \mathbf{q} \geq \mathbf{0}. \nonumber
\end{align*}

\subsection{Setup for Lot Sizing on a Network} \label{app: lot sizing}

We apply Algorithm 1 in \cite{long2023supermodularity} to solve the two-stage HO model \eqref{model: lot sizing} with $\mathcal{D}$ being $\mathcal{D}_{\textup D}$.
The goal of this algorithm is to identify the worst-case distribution $\mathbb{P}_{\boldsymbol{\xi}}^* \in \Pi_{\boldsymbol{\xi}}\mathcal{D}_{\textup D}$ of model \eqref{model: lot sizing}.
Once we obtain $\mathbb{P}_{\boldsymbol{\xi}}^*$, we can then solve model \eqref{model: lot sizing} by solving the following model:
\begin{align} \label{model: lot sizing_algorithm}
    \min_{\mathbf{x}} \ \left\{ \mathbf{a}^{\top} \mathbf{x} + \left( 1 - \lambda \right) \mathbb{E}_{\mathbb{P}_0} \left[ f \left( \mathbf{x}, \boldsymbol{\xi} \right) \right] + \lambda  \mathbb{E}_{\mathbb{P}_{\boldsymbol{\xi}}^*}\left[ f \left( \mathbf{x}, \boldsymbol{\xi} \right) \right] \ | \
    0 \leq x_i \leq K_i, \ \forall \, i \in [m] \right\}.
\end{align}

\noindent
Before applying this algorithm, we need to initially find a $\mathbb{P}_{\boldsymbol{\xi}}^{\dag} \in \arg\sup_{\mathbb{P}_{\boldsymbol{\xi}} \in \Pi_{\boldsymbol{\xi}}\mathcal{D}_{\textup D}} \mathbb{E}_{\mathbb{P}_{\boldsymbol{\xi}}}[ f ( \mathbf{x}, \boldsymbol{\xi} ) ]$ such that its marginal distribution in $i$ is independent of $\mathbf{x}$ for all $i \in [m]$.
By Proposition 1 in \cite{long2023supermodularity}, we have
\begin{align} \label{eqn: P_calculate}
    \mathbb{P}_{\boldsymbol{\xi}}^{\dag}\left( \xi_i=v \right) = 
    \begin{cases}
        \frac{\hat{\delta}_i}{2\left(\mu_i-\underline{d}_i\right)}, & \text{ if } v=\underline{d}_i \\ 
        1-\frac{\hat{\delta}_i\left(\overline{d}_i-\underline{d}_i\right)}{2\left(\overline{d}_i-\mu_i\right)\left(\mu_i-\underline{d}_i\right)}, & \text { if } v=\mu_i \\ 
        \frac{\hat{\delta}_i}{2\left(\overline{d}_i-\mu_i\right)}, & \text { if } v=\overline{d}_i \\ 
        0, & \text { otherwise }
    \end{cases},
\end{align}
where $\hat{\delta}_i = \min \{ \delta_i, \ 2(\overline{z}_i-\mu_i)(\mu_i-\underline{z}_i) / (\overline{z}_i-\underline{z}_i) \}$ for all $i \in [m]$ with $\overline{z}_i \geq \underline{z}_i$.
With \eqref{eqn: P_calculate}, we can then use Algorithm 1 in \cite{long2023supermodularity} to obtain the $\mathbb{P}_{\boldsymbol{\xi}}^*$.
\begin{algorithm}[h]
\SingleSpacedXI
\footnotesize
\caption{Algorithm 1 in \cite{long2023supermodularity}}
\begin{algorithmic}[1]
\Require
$\mathcal{D}_{\textup D}$ with given $\boldsymbol{\mu}$, $\boldsymbol{\delta}$, $\underline{\mathbf{d}}$, and $\overline{\mathbf{d}}$.
\State Denote $\mathbb{P}_{\boldsymbol{\xi}}^{\dag}$ obtained by \eqref{eqn: P_calculate} as the worst-case distribution and calculate $\mathbb{P}_{\boldsymbol{\xi}}^{\dag}( \xi_i=v )$ for $v \in \{\underline{d}_i, \mu_i, \overline{d}_i\}$ for any $i \in [m]$ using \eqref{eqn: P_calculate}.
\State Set $\hat{\boldsymbol{\xi}}^1 = \underline{\mathbf{d}}$, $\mathbf{q}^1 = ( \mathbb{P}_{\boldsymbol{\xi}}^{\dag}( \xi_1=\underline{d}_1), \mathbb{P}_{\boldsymbol{\xi}}^{\dag}(\xi_2=\underline{d}_2), \ldots, \mathbb{P}_{\boldsymbol{\xi}}^{\dag}(\xi_m=\underline{d}_m) )$, $p_1 = \min \{q_1^1, \ldots, q_m^1\}$, and $r=1$.
\For{$r \leq 2m$}
\State Set $l_r = \min \{i \in [m] \ | \ q_i^r = p_r  \}$.
\State Set $\hat{\boldsymbol{\xi}}^{r+1} = \hat{\boldsymbol{\xi}}^r$, $\mathbf{q}^{r+1} = \mathbf{q}^r - p_r \mathbf{1}$.
\State Set $\hat{\xi}_{l_r}^{r+1} = \mu_{l_r}$ if its existing value is $\underline{d}_{l_r}$ and $\hat{\xi}_{l_r}^{r+1} = \overline{d}_{l_r}$ if its existing value is $\mu_{l_r}$.
\State Set $q_{l_r}^{r+1} = \mathbb{P}_{\boldsymbol{\xi}}^{\dag}(\xi_{l_r} = \hat{\xi}_{l_r}^{r+1})$.
\State Set $p_{r+1} = \min \{ q_1^{r+1}, q_2^{r+1}, \ldots, q_m^{r+1} \}$.
\State Set $r = r+1$.
\EndFor
\Ensure
$\hat{\boldsymbol{\xi}}^1, \hat{\boldsymbol{\xi}}^2, \ldots, \hat{\boldsymbol{\xi}}^{2m+1}$ and $\mathbf{p} = (p_1, p_2, \ldots, p_{2m+1})^{\top}$.
\end{algorithmic}
\end{algorithm}

With obtained $\hat{\boldsymbol{\xi}}^1, \hat{\boldsymbol{\xi}}^2, \ldots, \hat{\boldsymbol{\xi}}^{2m+1}$ and $\mathbf{p}$, we have $\mathbb{P}_{\boldsymbol{\xi}}^* = \sum_{j=1}^{2m+1}p_j \delta_{\hat{\boldsymbol{\xi}}^j}$ and reformulate model \eqref{model: lot sizing_algorithm} as
\begin{align*}
    \min_{\mathbf{x}} \ \left\{ \mathbf{a}^{\top} \mathbf{x} + \left( 1 - \lambda \right) \mathbb{E}_{\mathbb{P}_0} \left[ f \left( \mathbf{x}, \boldsymbol{\xi} \right) \right] + \lambda  \sum_{j\in[2m+1]} p_j f \left( \mathbf{x}, \hat{\boldsymbol{\xi}}^j \right)  \ | \
    0 \leq x_i \leq K_i, \ \forall \, i \in [m] \right\},
\end{align*}
which is a linear programming (LP) model and can be solved easily.

Next, we introduce a local search algorithm designed for the scenario reduction problem \eqref{scenario_reduction_problem} as described in \cite{rujeerapaiboon2022scenario}.
Here, for any set $\tilde{\mathcal{S}} \in \mathcal{S}_0(M)$ and $M < N$, we define $G_l^{\prime} \left( \mathbb{P}_{\textup N}, \tilde{\mathcal{S}} \right) = \min _{\mathbb{Q}} \left\{ d_l \left( \mathbb{P}_{\textup N}, \mathbb{Q} \right) : \mathbb{Q} \in \mathcal{D}_0 \left( \tilde{\mathcal{S}} \right) \right\}$, which measures the type-$l$ Wasserstein distance between $\mathbb{P}_{\textup N}$ and its closest discrete distribution supported on $\tilde{\mathcal{S}}$.

\begin{algorithm}[h]
\SingleSpacedXI
\footnotesize
\caption{Local Search Algorithm in \cite{rujeerapaiboon2022scenario}}
\begin{algorithmic}[1]
\State Initialize the reduced set $\tilde{\mathcal{S}} \subseteq \{ \tilde{\boldsymbol{\xi}}_1, \ldots, \tilde{\boldsymbol{\xi}}_N \}$ with $|\tilde{\mathcal{S}}| = M$, arbitrarily.
\State Select the next exchange to be applied to $\tilde{\mathcal{S}}$ as
\begin{align*}
    \left( \tilde{\boldsymbol{\zeta}}, \tilde{\boldsymbol{\zeta}}^{\prime} \right) 
    \in \arg\min \left\{ 
    G_l^{\prime} \left( \mathbb{P}_{\textup N}, \tilde{\mathcal{S}} \cup \left\{ \boldsymbol{\zeta} \right\} \setminus \left\{ \boldsymbol{\zeta}^{\prime} \right\} \right) 
    \ : \ 
    \left( \boldsymbol{\zeta}, \boldsymbol{\zeta}^{\prime} \right) \in
    \left( \left\{ \tilde{\boldsymbol{\xi}}_1, \ldots, \tilde{\boldsymbol{\xi}}_N \right\} \setminus \tilde{\mathcal{S}} \right)
    \times \tilde{\mathcal{S}}
    \right\},
\end{align*}
and set $\tilde{\mathcal{S}} = \tilde{\mathcal{S}} \cup \{ \tilde{\boldsymbol{\zeta}} \} \setminus \{ \tilde{\boldsymbol{\zeta}}^{\prime} \}$ if $G_l^{\prime} ( \mathbb{P}_{\textup N}, \tilde{\mathcal{S}} \cup \{ \tilde{\boldsymbol{\zeta}} \} \setminus \{ \tilde{\boldsymbol{\zeta}}^{\prime} \} ) < G_l^{\prime} ( \mathbb{P}_{\textup N}, \tilde{\mathcal{S}} )$. \label{alg:local search_selection step}
\State Repeat Step \ref{alg:local search_selection step} until no further improvement is possible.
\end{algorithmic} \label{alg:local search}
\end{algorithm}

We initialize $\tilde{\mathcal{S}}$ using the results from applying $k$-means clustering algorithm to  samples $\tilde{\boldsymbol{\xi}}_1, \ldots, \tilde{\boldsymbol{\xi}}_N$.
We denote the latest reduced set obtained after Algorithm \ref{alg:local search} terminates as  $\tilde{\mathcal{S}}^*=\{ \tilde{\boldsymbol{\zeta}}_1^*, \ldots, \tilde{\boldsymbol{\zeta}}_M^* \}$.
Following the steps introduced in \cite{rujeerapaiboon2022scenario}, we can recover the distribution $\mathbb{Q}^*$ on the reduced set $\tilde{\mathcal{S}}^*$ as $\mathbb{Q}^* = \sum_{j=1}^M \omega_j \delta_{\tilde{\boldsymbol{\zeta}}_j^*}$ with the probability $\omega_j = |I_j| / N$ for any $j \in [M]$.
The sets $I_j \subseteq \{ \tilde{\boldsymbol{\xi}}_1, \ldots, \tilde{\boldsymbol{\xi}}_N \}$ ($\forall j \in [M]$) constitute a partition of $\{ \tilde{\boldsymbol{\xi}}_1, \ldots, \tilde{\boldsymbol{\xi}}_N \}$, i.e., $\cup_{j \in [M]} I_j = \{ \tilde{\boldsymbol{\xi}}_1, \ldots, \tilde{\boldsymbol{\xi}}_N \}$ and $I_i \cap I_j = \emptyset$ for any $i \neq j$, such that $I_j$ contains all elements of $\{ \tilde{\boldsymbol{\xi}}_1, \ldots, \tilde{\boldsymbol{\xi}}_N \}$ closest to $\tilde{\boldsymbol{\zeta}}_j^*$, in terms of the Euclidean norm.
 
With reduced set $\tilde{\mathcal{S}}^*$ and its distribution $\mathbb{Q}^*$, we can formulate model \eqref{model: lot sizing} under the stochastic programming framework as 
\begin{align} \label{model: lot sizing_local search}
    \min_{\mathbf{x}} \ \left\{ \mathbf{a}^{\top} \mathbf{x} + \sum_{j \in [M]} \omega_j f \left( \mathbf{x}, \tilde{\boldsymbol{\zeta}}_j^* \right) \ | \
    0 \leq x_i \leq K_i, \ \forall \, i \in [m] \right\}.
\end{align}

The ``Local Search" approach introduced in Section \ref{sec: lot sizing} first applies Local Search Algorithm \ref{alg:local search} to obtain $\tilde{\mathcal{S}}^*$ and $\mathbb{Q}^*$, and then solve model \eqref{model: lot sizing_local search}.
In our experiment, we set $l=1$ and $\eta_i = 1/N$ for any $i \in [N]$ for the ``Local Search" approach.
The ``Random" approach obtains $\tilde{\mathcal{S}}^{\prime} = \{\tilde{\boldsymbol{\zeta}}_1^{\prime}, \ldots, \tilde{\boldsymbol{\zeta}}_M^{\prime}\}$ by randomly selecting these $M$ scenarios from $\{ \tilde{\boldsymbol{\xi}}_1, \ldots, \tilde{\boldsymbol{\xi}}_N \}$ and establish the empirical distribution on $\tilde{\mathcal{S}}^{\prime}$, i.e., $\tilde{\mathbb{P}}_0 (\tilde{\mathcal{S}}^{\prime}) = \sum_{j=1}^M \delta_{\tilde{\boldsymbol{\zeta}}_j^{\prime}} / M$, and then solve the following model:
\begin{align*}
    \min_{\mathbf{x}} \ \left\{ \mathbf{a}^{\top} \mathbf{x} + \sum_{j \in [M]} \frac{1}{M} f \left( \mathbf{x}, \tilde{\boldsymbol{\zeta}}_j^{\prime} \right) \ | \
    0 \leq x_i \leq K_i, \ \forall \, i \in [m] \right\}.
\end{align*}

\subsection{Computational Performance of Different Scenario Reduction Approaches} \label{app:computational_performance}

Tables \ref{tab:objgap_n_30_N_1000}--\ref{tab:preparetime_n_30_N_1000} provide the performance of various scenario reduction approaches when $N = 1000$.
\begin{table}[htbp]
    \centering
    \scriptsize
    \caption{Approximation Error (\%) When $N=1000$} 
    \scalebox{0.95}{
    \begin{tabular}{cccccc}
    \toprule
    $M$ & \textbf{MAD\_$\sqrt{M_0}$} & \textbf{MAD\_Gap} & \textbf{MAD\_Cross} & \textbf{Random} & \textbf{ \tabincell{c}{Local \\ Search} } \\
    \midrule
    10    & 4.48  & 4.48  & 7.67  & 184.08  & 350.28  \\
    20    & 4.48  & 4.46  & 5.76  & 40.08  & 148.55  \\
    30    & 4.48  & 4.45  & 3.71  & 23.80  & 80.50  \\
    40    & 4.47  & 4.40  & 4.55  & 15.16  & 55.62  \\
    50    & 4.47  & 4.31  & 3.57  & 7.90  & 38.05  \\
    \bottomrule
    \end{tabular}%
    }
  \label{tab:objgap_n_30_N_1000}%
\end{table}

\vspace{-6mm}

\begin{table}[H]
  \begin{minipage}{0.5\textwidth}
    \centering
    \scriptsize
    \caption{Computational Time (s) When $N=1000$} 
    \scalebox{0.95}{
    \begin{tabular}{cccccc}
    \toprule
    $M$ & \textbf{MAD\_$\sqrt{M_0}$} & \textbf{MAD\_Gap} & \textbf{MAD\_Cross} & \textbf{Random} & \textbf{ \tabincell{c}{Local \\ Search} } \\
    \midrule
    10    & 367.31  & 443.83  & 417.55  & 10.16  & 7.47  \\
    20    & 497.07  & 506.16  & 511.40  & 27.93  & 17.71  \\
    30    & 805.34  & 773.85  & 631.82  & 79.91  & 79.73  \\
    40    & 913.05  & 744.54  & 756.72  & 150.99  & 210.81  \\
    50    & 883.98  & 964.92  & 885.89  & 242.44  & 316.55  \\
    \bottomrule
    \end{tabular}%
	}
	\label{tab:time_n_30_N_1000}
  \end{minipage}
  %
  \begin{minipage}{0.5\textwidth}
    \centering
    \scriptsize
    \caption{Preparation Time (s) When $N=1000$} 
    \scalebox{0.95}{
    \begin{tabular}{cccccc}
    \toprule
    $M$ & \textbf{MAD\_$\sqrt{M_0}$} & \textbf{MAD\_Gap} & \textbf{MAD\_Cross} & \textbf{Random} & \textbf{ \tabincell{c}{Local \\ Search} } \\
    \midrule
    10    & 0     & 4,168.07  & 9,855.43  & 0     & 3,600  \\
    20    & 0     & 0     & 0     & 0     & 3,600  \\
    30    & 0     & 0     & 0     & 0     & 3,600  \\
    40    & 0     & 0     & 0     & 0     & 3,600  \\
    50    & 0     & 0     & 0     & 0     & 3,600  \\
    \bottomrule
    \end{tabular}%
    }
  \label{tab:preparetime_n_30_N_1000}%
  \end{minipage}
\end{table}

\end{APPENDICES}

\end{document}